\documentclass[a4paper,12pt,reqno]{amsart}
\usepackage{amsmath, amssymb, amsthm, color}
\numberwithin{equation}{section}
\newtheorem{theorem}{Theorem}[section]
\newtheorem{proposition}[theorem]{Proposition}
\newtheorem{corollary}[theorem]{Corollary}
\newtheorem{lemma}[theorem]{Lemma}
\theoremstyle{definition}
\newtheorem*{remark}{Remark}
\newcommand{\vep}{\varepsilon}
\newcommand{\R}{{\bf R}}
\newcommand{\N}{{\bf N}}
\newcommand{\dsp}{\displaystyle}
\newcommand{\lam}{\lambda}
\newcommand{\cal}{\mathcal}
\begin{document}

\title{
Existence of infinitely many singular self-similar solutions 
for the semilinear heat equation}

\author{Y\={u}ki Naito}
\address[Y\={u}ki Naito]{
Department of Mathematics,  Hiroshima University \\
Higashi-Hiroshima, 739-8526, Japan}
\email{yunaito@hiroshima-u.ac.jp}

\date{\today}

\begin{abstract}
We show the existence of infinitely many singular positive forward self-similar solutions for the 
semilinear heat equation $\partial_t u = \Delta u + u^p$ for $x \in \R^N$, $t > 0$,  
where $N \geq 3$ and $N/(N-2) \leq p \leq (N+2)/(N-2)$. 
We also investigate the asymptotic behavior and the bound of the profiles of the solutions as $|x| \to 0$ 
and $|x| \to \infty$, respectively.  
Furthermore, we show nonuniqueness of positive solutions of the Cauchy problem for 
the equation with $p = N/(N-2)$ by using singular self-similar solutions. 
\end{abstract}

\keywords{Semilinear heat equation, Self-similar solution, 
Singular solution, Asymptotic behavior, Nonuniqueness}

\subjclass[2020]{35K58,  35C06, 35A24}

\maketitle

\setlength{\baselineskip}{5.5mm}

\section{Introduction}

In this paper we study the singular positive self-similar solutions of the equation 
\begin{equation}
	\partial_t w = \Delta w + w^p, \quad x \in \R^N, \ t > 0,
	\label{eq1.1}
\end{equation}
where $N \geq 3$ and $p \geq N/(N-2)$. 
The equation (\ref{eq1.1}) is invariant under the similarity transformation
$$
	w(x,t) \mapsto w_{\lam}(x, t) = \lam^{2/(p-1)}w(\lam x, \lam^2 t) 
	\quad \mbox{for all} \ \lam > 0.
$$
In particular, a solution $w$ is said to be self-similar when $w = w_{\lam}$ for all $\lam > 0$. 
Such self-similar solutions are global in time and 
expected to describe the large time behavior of global solutions to (\ref{eq1.1}). 
It can be easily checked that $w$ is a forward self-similar solution 
if and only if $w$ has the form
$$
	w(x, t) = t^{-1/(p-1)}u(x/\sqrt{t}) \quad \mbox{for} \ x \in \R^N, \ t > 0,
$$
where the profile function $u$ satisfies the elliptic equation
$$
	\Delta u + \frac{1}{2}x\cdot \nabla u + \frac{1}{p-1}u + u^p = 0 
	\quad \mbox{in} \ \R^N.
$$
In particular, if $u = u(r)$, $r = |x|$, is radially symmetric about the origin, 
then $u$ satisfies the ordinary differential equation
\begin{equation}
	u'' + \left(\frac{N-1}{r} + \frac{r}{2}\right)u' + \frac{1}{p-1}u + u^p = 0 
	\quad \mbox{for} \ r > 0.
	\label{eq1.2}
\end{equation}
Equation (\ref{eq1.2}) was introduced in \cite{HaWe}, and 
the properties of regular solutions of (\ref{eq1.2}) have been studied extensively 
in \cite{EsKa, Naib, PTW, Weia, Weib}. 
These solutions are often used to study the large time behavior of
global solutions to the equation \cite{CaWe, Kav, Kaw, Naic, Naid, STW}  
and to study solutions of the Cauchy problem of (\ref{eq1.1}) with 
singular initial data \cite{Naia, SoWe}.

In this paper we consider the existence and properties of singular positive solutions of (\ref{eq1.2}).  
By a singular positive solution $u$ of (\ref{eq1.2}), 
we mean that $u \in C^2(0, \infty)$ is a positive solution of (\ref{eq1.2}) for $r > 0$ 
and it satisfies $u(r) \to \infty$ as $r \to 0$. 

It is well known that, in the case $p > N/(N-2)$, 
equation (\ref{eq1.1}) has a singular positive stationary solution 
\begin{equation}
	U_L(|x|) = L|x|^{-2/(p-1)}, \quad \mbox{where} \  
	L = \left\{\frac{2}{p-1}\left(N-2-\frac{2}{p-1}\right)\right\}^{1/(p-1)}.
	\label{eq1.3}
\end{equation}
Let us mention that $U_L(r)$ is also a singular positive solution of (\ref{eq1.2}). 
It was shown by Quittner \cite{Qui} that, if $u$ 
is a singular positive self-similar solution of (\ref{eq1.2})
for $r > 0$, then $u(r) \equiv U_L(r)$ provided $p > (N+2)/(N-2)$. 
On the other hand, in the case 
$$
	\frac{N}{N-2} < p < \left\{
	\begin{array}{cl}
	\frac{N+ 2\sqrt{N-1}}{N-4+2\sqrt{N-1}}, \quad & \mbox{if} \ N \leq 10,
	\\[2ex]
	\frac{N+2}{N-1}, \quad  & \mbox{if} \ N > 10,
	\end{array}
	\right.
$$
the existence of a continuum of singular positive solutions was shown by Sato \cite{Sat}.
We refer to \cite[Appendix Ga]{QuSo}, 
in which a summary on the existence and nonexistence of singular self-similar solutions is provided.
In this paper, we consider the case $N/(N-2) \leq p \leq (N+2)/(N-2)$. 
Our first result is the following.

\begin{theorem}
\label{thm1.1}
Let $N/(N-2) \leq p < (N+2)/(N-2)$. 
Then $(\ref{eq1.2})$ has a continuum of singular positive solutions. 
In the case $N/(N-2) < p < (N+2)/(N-2)$, 
any singular positive solution $u$ satisfies 
\begin{equation}
	\lim_{r \to 0}r^{2/(p-1)}u(r) = L, 
	\label{eq1.4}
\end{equation}
where $L$ is the constant defined in $(\ref{eq1.3})$. 
In the case $p = N/(N-2)$, any singular positive solution $u$ satisfies 
\begin{equation}
	\lim_{r \to 0}r^{N-2}(-(\log r))^{(N-2)/2}u(r) = \left(\frac{(N-2)^2}{2}\right)^{(N-2)/2}.
	\label{eq1.5}
\end{equation}
\end{theorem}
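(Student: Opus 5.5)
We plan to prove the theorem in two parts: existence of a one-parameter family of singular solutions, and the asymptotics (\ref{eq1.4})--(\ref{eq1.5}) for every singular solution.

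\textbf{Existence.} Use Emden--Fowler variables. For $N/(N-2)<p<(N+2)/(N-2)$ set $t=-\log r$ and $u(r)=r^{-2/(p-1)}v(t)$. Equation (\ref{eq1.2}) becomes an autonomous Emden--Fowler equation, perturbed by terms of size $e^{-2t}$ coming from $\frac{r}{2}u'+\frac{1}{p-1}u$:
$$
v''-a v'-L^{p-1}v+v^p=e^{-2t}\Big(\tfrac12 v'+\dots\Big),
\qquad a=N-2-\tfrac{4}{p-1}.
$$
We have $a>0$ exactly when $p>N/(N-2)$ is paired with the subcritical condition $p<(N+2)/(N-2)$. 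The equilibrium $v=L$ is a saddle of the unperturbed system. Its linearization $\mu^2-a\mu+(p-1)L^{p-1}=0$ has eigenvalues of positive real part only, so as $t\to\infty$ both directions are repelling. Hence we work backward: we seek solutions with $v(t)\to L$ as $t\to+\infty$ ($r\to0$), which form the stable manifold in reversed time.

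Concretely, we solve the integral equation
$$
v=L+w,\qquad w=\mathcal{T}w,
$$
obtained by variation of constants on $[T,\infty)$ for large $T$, with the nonlinear remainder and the $e^{-2t}$ forcing treated as small. Since both characteristic roots have positive real part, the integral representation from $+\infty$ is uniquely determined. A free parameter therefore cannot come from the linear part at $L$.

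We instead take the continuum from the scaling-breaking term. Take $u(r)=U_L(r)(1+\phi(r))$ with $\phi\to0$, and look at the linearized operator around $U_L$ near $r=0$. It is $\phi''+\frac{N-1-4/(p-1)}{r}\phi'+\frac{(p-1)L^{p-1}}{r^2}\phi$ plus lower-order terms. Its indicial roots $\lambda$ satisfy $\lambda^2+(a)\lambda+(p-1)L^{p-1}=0$ in $r$; these have negative real part, so the solutions $r^\lambda$ blow up relative to $U_L$. The bounded homogeneous solutions of the full problem thus form a one-dimensional family, parametrized by the value at infinity.

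The plan is therefore a shooting argument from $r=\infty$. Every solution of (\ref{eq1.2}) that is positive near infinity satisfies $u\sim c\,r^{-2/(p-1)}$ or decays like Gaussian-type $r^{2/(p-1)-N}e^{-r^2/4}$. Prescribe $\ell=\lim_{r\to\infty}r^{2/(p-1)}u(r)\in(0,\ell_0)$, with $u_\ell$ obtained by a contraction near $r=\infty$. We then show, by continuous dependence and by comparing with regular solutions (whose profiles are classified in the cited works), that for a continuum of $\ell$ the solution stays positive on $(0,\infty)$ and blows up as $r\to0$. To get this, I would use the energy
$$
E(t)=\tfrac12 v'^2-\tfrac{L^{p-1}}2v^2+\tfrac{1}{p+1}v^{p+1},
$$
which is monotone up to $e^{-2t}$ errors because of the damping $a>0$. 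Together with the separation between regular solutions and the singular one $U_L$ (intersection-number arguments), this gives an open set of $\ell$ producing singular positive solutions. This positivity-and-singularity step is the main obstacle. I would first try a continuity/topological argument: the set of $\ell$ giving regular solutions and the set giving sign changes are both open, so their complement is nonempty and closed in between. That complement is the singular family, and showing it contains a continuum requires uniqueness of regular solutions with given $\ell$.

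\textbf{Asymptotics.} Let $u$ be any singular positive solution with $N/(N-2)<p<(N+2)/(N-2)$. Positivity and $u\to\infty$ give, via the energy $E$ and the damping $a>0$, that $v$ is bounded. The perturbation terms are integrable, so $E(t)$ converges and $\int^\infty v'^2<\infty$. The $\omega$-limit set of $(v,v')$ then consists of equilibria $\{0,L\}$ (negative values are excluded by positivity). The case $v\to0$ is excluded by singularity: if $v\to0$, linearization gives $u\sim c r^{-(N-2)}$ or $u$ bounded. The first is impossible because $r^{-(N-2)}\ll r^{-2/(p-1)}$ fails when $p>N/(N-2)$, which makes that branch inconsistent with $u^p$ integrability. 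Hence $v\to L$, which is (\ref{eq1.4}).

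For $p=N/(N-2)$ we have $a=0$ and $L=0$, so the analysis is degenerate. Put $u=r^{-(N-2)}y(s)$ with $s=-\log r$; the equation becomes $y''=-y^p$ up to $e^{-2s}$ terms. A slowly varying analysis, using the ansatz $y\sim C s^{-(N-2)/2}$, balances $y''$ against $y^p$ at second order. Matching gives $C=((N-2)^2/2)^{(N-2)/2}$, which is (\ref{eq1.5}). Justifying this ansatz rigorously, via a further change $y=s^{-(N-2)/2}z(\log s)$ and a similar energy argument, is the second delicate point.
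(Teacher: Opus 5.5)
Your proposal has genuine gaps. The existence part rests on a sign error. For $N/(N-2)\le p<(N+2)/(N-2)$ one has $\frac{4}{p-1}>N-2$, so $a_\infty=N-2-\frac{4}{p-1}<0$. It equals $-(N-2)$ at $p=N/(N-2)$ and vanishes only at the critical exponent. The regime $a_\infty>0$ is the supercritical one, where by Quittner's result $U_L$ is the \emph{only} singular solution.

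Hence, for $p>N/(N-2)$, the roots of $\mu^2-a_\infty\mu+(p-1)L^{p-1}=0$ have positive product and negative sum. So $L$ is not a saddle but an attractor as $t\to+\infty$, i.e.\ as $r\to0$. Equivalently, your indicial roots $\lambda$ have positive real part, so the perturbations $U_L\phi$ are $o(U_L)$. Your conclusion that no free parameter comes from the linearization at $L$ is therefore backwards.

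The replacement you sketch is not a proof. Prescribing $\ell$ does not single out a solution, because the mode $r^{2/(p-1)-N}e^{-r^2/4}$ stays free. The set of $\ell$ producing regular solutions has no reason to be open, since regular solutions form a one-parameter subfamily of a two-parameter solution set. And, as you concede, positivity and blow-up at $r=0$ are never proved.

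The missing idea is the Lyapunov trapping argument. Since $E'=a(t)w'^2$ with $a(t)\to a_\infty<0$, data with $w(t_0)>0$ and $E(t_0)<0$ at a large $t_0$ give $\Phi(w(t))\le E(t_0)<0$ for all $t\ge t_0$. Then $w$ stays in a compact subset of $(0,A_0)$, so $u\ge Cr^{-2/(p-1)}$ near $0$. Positivity for large $r$ follows from an open condition such as that of Lemma \ref{lem3.3}. Since both requirements are open, one gets a continuum; for $p>N/(N-2)$, e.g.\ all solutions with data close to those of $U_L$. This is the mechanism of Propositions \ref{prp3.1} and \ref{prp3.6}, the latter using $\mathcal{E}$ and the contraction of Lemma \ref{lem3.5}. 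A caution if you carry this out: a direct computation (or comparison with (\ref{eq2.8})) shows the $e^{-2t}$ correction in $a(t)$ is $+\frac12e^{-2t}$. So $E$ is monotone only for $t$ large, and $t_0$ must be chosen accordingly.

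For the asymptotics with $N/(N-2)<p<(N+2)/(N-2)$, your outline is essentially the paper's route through Lemmas \ref{lem4.1}, \ref{lem4.3} and \ref{lem5.2}--\ref{lem5.4}. That outline is: $E$ converges, $\int^\infty w'^2\,dt<\infty$, the limit lies in $\{0,L\}$, and $0$ is excluded because the branch $u\sim cr^{-(N-2)}$ forces $w\to\infty$ while the other branch is bounded. It works only after correcting the sign: with $a>0$ as you state, $E$ would be nondecreasing and give no control.

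The case $p=N/(N-2)$ is wrong as written. There $L=0$ but $a_\infty=-(N-2)\ne0$. So $y=r^{N-2}u$ satisfies $y''+(N-2)y'+y^p=0$ in $s=-\log r$, up to $O(e^{-2s})$ terms, not $y''=-y^p$. The constant in (\ref{eq1.5}) comes from balancing the first-order term against the nonlinearity. For $y=Cs^{-(N-2)/2}$ one gets $(N-2)y'+y^p=(C^p-\frac{(N-2)^2}{2}C)s^{-N/2}$, whereas $y''=O(s^{-(N+2)/2})$ is of lower order. With your equation this ansatz is not even an approximate solution, so no matching yields $K$.

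The rigorous justification is also absent. The paper obtains it from $w=r^{N-2}t^{(N-2)/2}u$ and the weighted energy $\mathcal{E}(w)$, which is decreasing for $t\ge2$ and gives $\int^\infty tw'^2\,dt<\infty$. Oscillation is then ruled out through the boundedness of $(tw'^2)'$. This forces $tw'^2\to0$ and $w\to K$, with the limit positive by Lemma \ref{lem4.5}.
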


\begin{remark}
(i) In \cite{Sat} the singular solution was constructed as a convergence limit of 
approximate solutions in an appropriate function space. 
In the proof of Theorem \ref{thm1.1}, we use a shooting method to show the existence of singular solutions. 

(ii) Let $u^*$ be a singular positive solution of (\ref{eq1.2}) obtained by Theorem \ref{thm1.1}. 
Define $w^*(x, t)$ by 
\begin{equation}
	w^*(x, t) = t^{-1/(p-1)}u^*(|x|/\sqrt{t}) \quad \mbox{for} \ x \in \R^N\setminus\{0\}, \ t > 0.
	\label{eq1.6}
\end{equation}
Then $w^*$ is a singular positive self-similar solution of (\ref{eq1.1}). 
In the case $N/(N-2) < p < (N+2)/(N-2)$, we have 
$$
	|x|^{2/(p-1)}w^*(x, t) = (|x|/\sqrt{t})^{2/(p-1)}u^*(|x|/\sqrt{t}) \to L 
	\quad \mbox{as} \ |x| \to 0. 
$$
for each fixed $t > 0$.
Note that $\log(|x|/\sqrt{t}) \sim \log |x|$ as $|x| \to 0$ for each fixed $t > 0$.  
Then, in the case $p = N/(N-2)$, we have 
\begin{equation}
	|x|^{N-2}(-(\log |x|))^{(N-2)/2}w^*(x, t) 
	\to \left(\frac{(N-2)^2}{2}\right)^{(N-2)/2} 
	 \quad \mbox{as} \ |x| \to 0
	\label{eq1.7}
\end{equation}
for each fixed $t > 0$. 
\end{remark}

To state the results in the case $p = (N+2)/(N-2)$, 
we define 
\begin{equation}
	\Phi(v) = -\frac{L^{p-1}}{2}v^2 + \frac{1}{p+1}v^{p+1}
	\quad \mbox{for} \ v \geq 0,
	\label{eq1.8}
\end{equation}
where $L$ is the constant defined in (\ref{eq1.3}). 
Then $\Phi$ satisfies 
$\Phi(0) = \Phi(A_0) = 0$ and $\Phi(v) < 0$ for $0 < v < A_0$, where 
\begin{equation}
	A_0 = \left(\frac{(p+1)L^{p-1}}{2}\right)^{1/(p-1)}.
	\label{eq1.9}
\end{equation}
We see that 
\begin{equation}
	\Phi(v) \geq \Phi(L) = -\left(\frac{1}{2}-\frac{1}{p+1}\right)L^{p+1}
	\quad \mbox{for} \ v \geq 0.
	\label{eq1.10}
\end{equation}
We obtain the following.

\begin{theorem}
\label{thm1.2}
Let $p = (N+2)/(N-2)$. 
Then $(\ref{eq1.2})$ has infinitely many singular positive solutions. 
For each singular positive solution $u$, 
there exist constants $\gamma_1 \leq \gamma_2$ such that 
\begin{equation}
	\gamma_1 = \liminf_{r \to 0}r^{2/(p-1)}u(r) 
	\leq \limsup_{r \to 0}r^{2/(p-1)}u(r) = \gamma_2.
	\label{eq1.11}
\end{equation}
These constants $\gamma_1, \gamma_2$ satisfy 
$0 < \gamma_1 \leq L \leq \gamma_2 < A_0$ and 
$\Phi(\gamma_1) = \Phi(\gamma_2)$, where $A_0$ and $\Phi$ are defined by 
$(\ref{eq1.9})$ and $(\ref{eq1.8})$, 
respectively. 
Furthermore, if $\gamma_1 = \gamma_2$ in $(\ref{eq1.11})$, then $u(r) \equiv U_L(r)$.
\end{theorem}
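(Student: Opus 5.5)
The plan is to pass to Emden--Fowler variables. With $m=2/(p-1)=(N-2)/2$, set $t=-\log r$ and $u(r)=r^{-m}v(t)$. For $p=(N+2)/(N-2)$ we have $N-2-2m=0$, and then $(\ref{eq1.2})$ becomes
$$
	v'' - L^{p-1}v + v^p = e^{-2t}\,\Bigl(\frac{1}{2}v' + c\,v\Bigr)
$$
for a suitable constant $c$, with $t\to\infty$ corresponding to $r\to 0$. The unperturbed equation $v''=\Phi'(v)$ is conservative with energy $E=\frac12 (v')^2-\Phi(v)$. Its orbits are the equilibrium $v=L$ (minimum of $\Phi$), the homoclinic orbit at energy $0$ reaching $A_0$, and periodic orbits for $\Phi(L)<-E<0$; each periodic orbit oscillates between two values $\gamma_1\le L\le\gamma_2<A_0$ with $\Phi(\gamma_1)=\Phi(\gamma_2)=-E$.

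\textbf{Step 1: asymptotics of the energy.} Along a positive solution,
$$
	\frac{dE}{dt} = v'\, e^{-2t}\Bigl(\frac12 v' + c v\Bigr).
$$
I will first prove a priori bounds: for a singular positive solution, $v$ and $v'$ stay bounded as $t\to\infty$. Positivity of $u$ together with a comparison argument should give $v$ bounded, hence $E$ bounded below. Since the right-hand side is integrable, $E(t)\to E_\infty$. The limit must satisfy $E_\infty\le 0$, since positive orbits with $E>0$ have $v\to\infty$ or hit zero. If $E_\infty=0$, then $v$ approaches the homoclinic orbit and returns to near $0$ infinitely often. I will rule this out, or else show that $v\to 0$, which contradicts singularity; in fact $v\to 0$ means $u=o(r^{-m})$, and then a standard argument gives $u$ bounded, i.e.\ a regular solution. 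This exclusion of $E_\infty=0$ is the main obstacle. I would try to show that near $v=0$ the perturbation term $c\,e^{-2t}v\,v'$ produces a definite-sign loss of energy, or compare with the linearization $v''=L^{p-1}v$, whose decaying mode gives $u\sim r^{-m}e^{-Lt^{\,\cdot}}$-type behaviour, i.e.\ $u\sim r^{-m+ m}$ bounded, hence not singular.

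Then $\Phi(L)\le -E_\infty<0$. The $\omega$-limit set of $(v,v')$ is the closed orbit at energy $E_\infty$, because the perturbation decays exponentially and the orbit is a periodic orbit without equilibria except in the case $E_\infty=-\Phi(L)$. Hence the liminf and limsup of $v$ are $\gamma_1$ and $\gamma_2$, with $0<\gamma_1\le L\le\gamma_2<A_0$ and $\Phi(\gamma_1)=\Phi(\gamma_2)$. This is exactly $(\ref{eq1.11})$.

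\textbf{Step 2: rigidity when $\gamma_1=\gamma_2$.} If $\gamma_1=\gamma_2$, then $v\to L$. In that case I would use a Pohozaev-type identity for $(\ref{eq1.2})$ in the critical case, integrated over $(0,\infty)$, to conclude $u\equiv U_L$. Concretely, with $w=u-U_L$, multiply by $r^{N}u'$ plus suitable terms, using the decay of $u$ at infinity and the fact that $r^{m}u\to L$, $r^{m+1}u'\to -mL$ at $0$. An alternative is to observe that the linearization of the autonomous part at $L$ is a center. The exponentially small perturbation then forces $v-L=O(e^{-2t})$, and uniqueness of the solution with this asymptotics gives $v\equiv L$.

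\textbf{Step 3: existence of infinitely many solutions.} I would shoot from $r=\infty$ using the one-parameter family of solutions with $u(r)\sim \ell\, r^{-2/(p-1)}$ as $r\to\infty$. By continuity in $\ell$, and by counting zeros or oscillations of $v-L$ in the manner of Theorem \ref{thm1.1}, I would produce for infinitely many parameter values positive solutions with $v$ bounded away from $0$ and infinity. These are singular by Step 1.
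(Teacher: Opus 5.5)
Your outline has two genuine gaps. The first is exactly the one you flag.

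\textbf{Excluding $E_\infty=0$.} To get $\gamma_1>0$ and $\gamma_2<A_0$ you must exclude $E_\infty=0$, and neither of your suggested mechanisms can do it. The constant $c$ is in fact $0$. With $u=r^{-m}w(t)$, $t=-\log r$, one has $du/dr=-r^{-m-1}(mw+w')$, so $\frac r2\,\frac{du}{dr}+\frac1{p-1}u=-\frac12 r^{-m}w'$ exactly. Since $2m+2-N=0$, the equation is $w''-L^{p-1}w+w^p=\frac12 e^{-2t}w'$. (Also, the unperturbed flow is $w''=-\Phi'(w)$ with conserved quantity $\frac12 w'^2+\Phi(w)$; your formula for $dE/dt$ is the one for this $E$.)

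Hence $\frac{d}{dt}E=\frac12 e^{-2t}w'^2\ge 0$. There is no loss term at all: the energy only increases toward $r=0$. Comparison with the decaying mode of $w''=L^{p-1}w$ handles only solutions with $w\to 0$, and those are regular by Lemma 4.3. It says nothing about a solution that shadows the homoclinic loop, returning near $A_0$ infinitely often while $E\uparrow 0$. That is precisely what $E_\infty=0$ permits, and it gives $\gamma_1=0$, $\gamma_2=A_0$. Excluding it needs a quantitative analysis of the energy gained along each excursion against the time spent near the saddle $w=0$, which you do not supply.

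\textbf{Existence (Step 3).} Step 3 contains no mechanism. Continuity in $\ell$ plus counting oscillations does not separate singular solutions from regular ones (note $S_\ell\neq\emptyset$ for $\ell<\ell^*$) or from sign-changing ones. The paper instead shoots from a finite $r_0$ with $r_0^2>4/(p-1)$ and data (3.3), i.e.\ $w(t_0)=\alpha$, $w'(t_0)=0$, $\Phi(\alpha)<0$. It gets positivity on $[r_0,\infty)$ from the integrated identity (3.4) (Lemma 3.3), and singularity on $(0,r_0]$ from trapping in the potential well (Lemma 3.2).

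\textbf{What the paper does, and a sign problem there.} The paper disposes of your main obstacle only through monotonicity. In Lemma 5.4, at local minima $t_n$ with $0<w(t_n)<L$ one has $E(w)(t_n)=\Phi(w(t_n))<0$, and $\zeta\le E(w)(t_n)$ because $E(w)$ is nonincreasing by (2.5). Lemmas 3.2 and 4.2 rest on the same fact.

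That monotonicity comes from (2.3), which in the critical case reads $a(t)=-\frac12 e^{-2t}$. The computation above gives instead $a(t)=N-2-\frac4{p-1}+\frac12 e^{-2t}$. This is your sign, and it is the one consistent with the paper's own (2.17) (set $s=-t$, $m=N-2$) and with $b(t)$ in (2.8). With it, $E(w)$ is nondecreasing in $t$. So neither the one-line exclusion of $\zeta=0$ nor the trapping of Lemma 3.2 (where $a(t_0)=\frac12 r_0^2$ is not small) is available. The gap you identified is real, and the paper's argument does not close it either.

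\textbf{What survives, more simply than in your sketch.}
\begin{itemize}
\item Boundedness of $w,w'$ follows from Gronwall applied to $(E-\Phi(L))'\le e^{-2t}(E-\Phi(L))$, using (1.10). No comparison argument is needed.
\item Rigidity is immediate. If $w\to L$, then $w'\to 0$ by the argument of Lemma 5.3. Then $\Phi(L)\le E(w)(t)\le\lim_{\tau\to\infty}E(w)(\tau)=\Phi(L)$ forces $w'\equiv 0$, hence $w\equiv L$. No Pohozaev identity or uniqueness-from-asymptotics is needed.
\end{itemize}
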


\begin{remark}
It was shown by \cite{Qui} that 
(\ref{eq1.2}) has no singular solution $u \not\equiv U_L$ 
such that $u -U_L$ has finite number of sign changes when $p = (N+2)/(N-2)$. 
Consequently, any singular solution $u \not\equiv U_L$ of (\ref{eq1.2}) intersects $U_L$ infinitely many times. 
This intersection property can also be derived from Theorem \ref{thm1.2}. 
In fact, Theorem \ref{thm1.2} says that any singular positive solution $u \not\equiv U_L$ 
satisfies $\gamma_1 > \gamma_2$ in (\ref{eq1.11}), 
and hence the solution $u$ intersects $U_L$ infinitely many times. 
\end{remark}

For $\alpha > 0$, we denote by $u_{\alpha}$ a unique solution of (\ref{eq1.2}) satisfying 
$u(0) = \alpha$ and $u'(0) = 0$. 
It was shown in \cite[Proposition 3.4 and Theorem 5]{HaWe} that the limit
$$
	\ell(\alpha) = \lim_{r \to \infty}r^{2/(p-1)}u_{\alpha}(r)
$$
exists and is finite. 
For each $\ell > 0$, we denote by $S_{\ell}$ the set of positive solutions 
$u \in C^2(0, \infty)\cap C^1[0, \infty)$ of (\ref{eq1.2}) such that 
$u'(0) = 0$ and $\lim_{r \to \infty}r^{2/(p-1)}u(r) = \ell$.
Define 
\begin{equation}
	\ell^* = \sup\{\ell > 0: S_{\ell} \neq \emptyset\}.
	\label{eq1.14}
\end{equation}
It was shown by \cite[Lemma 3.1]{Naib} that $0 < \ell^* < \infty$ and 
$S_{\ell} \neq \emptyset$ for $0 < \ell < \ell^*$ if $p > (N+2)/N$. 
Let $u$ be a singular positive solution of (\ref{eq1.2}). 
Then, by a slight modification of the argument in \cite{HaWe}, 
we find that the limit
\begin{equation}
	\lim_{r \to \infty}r^{2/(p-1)}u(r) = \ell \geq 0
	\label{eq1.15}
\end{equation}
exists and is finite. We obtain the following result.

\begin{theorem}
\label{thm1.3}
Let $N/(N-2) \leq p \leq (N+2)/(N-2)$, and let 
$u$ be a singular positive solution of $(\ref{eq1.2})$. 
Then the limit $\ell$ in $(\ref{eq1.15})$ satisfies $0 \leq \ell \leq \ell^*$, 
where $\ell^*$ is defined by $(\ref{eq1.14})$.
\end{theorem}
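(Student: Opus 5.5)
Since $\ell^*$ is defined through the regular solutions $u_\alpha$, the natural strategy is a comparison (separation/intersection) argument between the given singular solution $u$ and the family $\{u_\alpha\}$. We argue by contradiction: suppose $\ell > \ell^*$. We aim to show that then some regular solution $u_\alpha$ has $\ell(\alpha) \ge \ell$, which contradicts the definition (\ref{eq1.14}) of $\ell^*$, since $\ell(\alpha)$ is attained in $S_{\ell(\alpha)}$.

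\textbf{Step 1: a Wronskian-type intersection argument.} Write (\ref{eq1.2}) in divergence form,
$$(\rho u')' + \rho\left(\frac{u}{p-1} + u^p\right) = 0, \qquad \rho(r) = r^{N-1}e^{r^2/4}.$$
For two solutions $u,v$ we then have
$$\bigl(\rho(u'v - uv')\bigr)' = \rho\, uv\,(v^{p-1}-u^{p-1}).$$
Since $u(r) \to \infty$ as $r \to 0$ while $u_\alpha(0)=\alpha$ is finite, $u > u_\alpha$ near $0$ for every $\alpha$. We study the first intersection point $z(\alpha)$ of $u$ and $u_\alpha$. As $\alpha \to \infty$, the rescalings of $u_\alpha$ converge to the Emden--Fowler regular solution. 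Because $p \le (N+2)/(N-2)$, that solution crosses the singular behavior described in Theorem~\ref{thm1.1} and Theorem~\ref{thm1.2}. Hence $u_\alpha$ intersects $u$ for large $\alpha$, and $z(\alpha) \to 0$.

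For small $\alpha$ we have $u_\alpha < u$ everywhere: indeed $u_\alpha \to 0$ locally uniformly with $\ell(\alpha)\to 0$, while $u$ is bounded below on compact sets and $r^{2/(p-1)}u \to \ell > 0$. Define
$$\alpha_0 = \sup\{\alpha : u_\beta < u \text{ on } (0,\infty) \text{ for all } \beta < \alpha\}.$$
Then $u_{\alpha_0} \le u$ on $(0,\infty)$, with either touching at a finite point or touching only "at infinity." Touching at a finite point is excluded by uniqueness for the ODE, since touching would force $u_{\alpha_0}'=u'$ there. Consequently $\ell(\alpha_0)$ must be as large as possible relative to $\ell$.

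\textbf{Step 2: comparison at infinity.} Use the expansion at infinity for solutions of (\ref{eq1.2}), adapting the argument of \cite{HaWe}:
$$r^{2/(p-1)}u(r) = \ell + O(r^{-2}),$$
or, when the limit is zero, $u$ decays like $r^{-N}e^{-r^2/4}$ times a power. The goal is to show that if $u_{\alpha_0} \le u$ with $\ell(\alpha_0) < \ell$, then slightly larger $\alpha$ still yield $u_\alpha < u$. This would contradict the maximality of $\alpha_0$, and so gives $\ell \le \ell(\alpha_0) \le \ell^*$. The needed perturbation statement has two parts. First, continuous dependence on compact sets together with strict inequality at $r=0$ handles the region near the origin. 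Second, uniform convergence of $r^{2/(p-1)}u_\alpha(r)$ to $\ell(\alpha)$ on $[R,\infty)$, uniformly for $\alpha$ near $\alpha_0$, handles the far field. This uniformity follows from the integral identity for $(r^{2/(p-1)}u)'$ obtained by integrating $(\rho u')' $ as in \cite{HaWe}.

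\textbf{Main obstacle.} The hardest step is the middle region and the case of tangency at infinity, namely $\ell(\alpha_0)=\ell$ exactly, which gives no contradiction by itself. In that case, however, we directly obtain $\ell = \ell(\alpha_0) \le \ell^*$, so this outcome is acceptable. The real difficulty is excluding the scenario in which $u_\alpha$ crosses $u$ for all $\alpha > \alpha_0$ near $\alpha_0$ only at points tending to infinity while $\ell(\alpha_0) < \ell$. To exclude it, we would use the Wronskian identity integrated from the crossing point to infinity. With $w = r^{2/(p-1)}$-scaled profiles, $\rho(u'v-uv')$ tends to a limit determined by $\ell$ and $\ell(\alpha)$, which yields a sign contradiction whenever $\ell(\alpha) < \ell$ at a far crossing. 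If this approach fails, a fallback is to use the continuity of $\ell(\alpha)$ and the connectedness of $\{\ell(\alpha)\}$ ($\supseteq (0,\ell^*]$) together with a zero-number (intersection count) argument for $u - u_\alpha$ as $\alpha$ varies.
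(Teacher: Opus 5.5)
\textbf{There is a genuine gap: your argument never establishes that $u_{\alpha_0}$ is positive.}

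Suppose we grant the facts you assert without proof: continuity of $\alpha\mapsto\ell(\alpha)$, uniform convergence of $r^{2/(p-1)}u_\alpha(r)$ to $\ell(\alpha)$ on $[R,\infty)$ for $\alpha$ near $\alpha_0$, $\alpha_0<\infty$, and $u_\alpha<u$ for small $\alpha$. Then your Step 2 does give $u_{\alpha_0}\le u$ and $\ell(\alpha_0)=\ell$. That uniformity also already rules out the ``far crossing'' scenario you single out as the main obstacle.

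The real gap is the final inference $\ell(\alpha_0)\le\ell^*$. It requires $u_{\alpha_0}\in S_{\ell(\alpha_0)}$, i.e.\ $u_{\alpha_0}>0$ on $[0,\infty)$. Your supremum is taken over the condition $u_\beta<u$ only, so nothing controls the sign of $u_\beta$.

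Under your contradiction hypothesis $\ell>\ell^*$, this is not a side issue but the whole point. If $u_{\alpha_0}$ were positive you would get $\ell=\ell(\alpha_0)\le\ell^*$ at once. So the hypothesis simply forces $u_{\alpha_0}$ to change sign, and nothing is contradicted.

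This is what happens whenever regular solutions lose positivity, e.g.\ for $p<(N+2)/(N-2)$, where $u_\alpha$ has zeros for large $\alpha$:
\begin{itemize}
\item As long as $u_\alpha>0$ one has $\ell(\alpha)\le\ell^*<\ell$, so by your own Step 2 no intersection with $u$ can form.
\item At the end $\bar\alpha$ of the positivity range, the same continuity arguments give $u_{\bar\alpha}>0$ with $\ell(\bar\alpha)=0$, since positivity together with $\ell>0$ is an open condition.
\item Just beyond $\bar\alpha$, $u_\alpha$ acquires a zero far out while still lying below $u$. So $\alpha_0>\bar\alpha$, and $u_{\alpha_0}$ (which even requires extending $u^p$ to negative values) lies in no $S_\ell$.
\end{itemize}
Restricting the supremum to positive $u_\beta$ does not help. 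The terminal alternative then becomes ``$u_{\alpha_0}$ touches $0$ at infinity'', i.e.\ $\ell(\alpha_0)=0$, which is exactly what you would need to exclude. The Wronskian and zero-number fallbacks do not address this.

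\textbf{What is missing, and how the paper supplies it.} You need a way to produce a \emph{positive regular} solution from $u$. This requires the rigidity of the singular behaviour at the origin (Theorems~\ref{thm1.1}--\ref{thm1.2}), whereas your argument uses only $u\to\infty$. The paper proceeds as follows.
\begin{itemize}
\item For $\mu\in(0,1)$, $v=\mu u$ satisfies $\mathcal{L}v+v^p=(\mu^p-\mu)u^p<0$, with $v\to\infty$ at $0$ and $r^{2/(p-1)}v\to\mu\ell$.
\item The positive solution $\phi_{\mu\ell}$ of $\mathcal{L}\phi=0$, $\phi'(0)=0$, $r^{2/(p-1)}\phi\to\mu\ell$ lies below $v$ by Lemma~\ref{lem6.2}.
\item Monotone iteration between $\phi_{\mu\ell}$ and $v$ yields a positive solution $\underline{u}\le\mu u$ with $r^{2/(p-1)}\underline{u}\to\mu\ell$.
\item If $\underline{u}$ were singular, Theorem~\ref{thm1.1} would force $r^{2/(p-1)}\underline{u}\to L$ (or the log-corrected limit when $p=N/(N-2)$), which is incompatible with $\underline{u}\le\mu u$.
\item For $p=(N+2)/(N-2)$, Theorem~\ref{thm1.2} gives $\underline{\gamma}_1\le\mu\gamma_1<L\le\underline{\gamma}_2\le\mu\gamma_2$. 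Hence $\Phi(\underline{\gamma}_1)>\Phi(\gamma_1)=\Phi(\gamma_2)>\Phi(\underline{\gamma}_2)$, contradicting $\Phi(\underline{\gamma}_1)=\Phi(\underline{\gamma}_2)$.
\item Thus $\underline{u}\in S_{\mu\ell}$, so $\mu\ell\le\ell^*$, and letting $\mu\to1$ gives the claim.
\end{itemize}
The factor $\mu<1$ pushes the supersolution strictly below the universal singular profile. That is what delivers positivity and regularity simultaneously, and it is precisely what your shooting argument lacks.
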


\begin{remark}
(i) The property $\ell \leq \ell^*$ was 
already obtained by \cite[Lemma 7.1]{Naic} in the study of the Cauchy problem for (\ref{eq1.1}). 
In this paper, we will show this result by using the comparison principle for the equation (\ref{eq1.2}). 
The proof is completely different from that of \cite{Naic}, and it might be of interest. 

(ii) It is an interesting open question whether (\ref{eq1.2}) has a singular solution 
$u$ satisfying (\ref{eq1.15}) for any $\ell \in (0, \ell^*]$.
\end{remark}

It is well known that self-similar solutions are closely related to 
nonuniqueness results for the Cauchy problem
\begin{equation}
	\left\{
	\begin{array}{c}
	\partial_t w = \Delta w + w^p, \quad x \in \R^N, \ t > 0, 
	\\[1ex]
	w(x, 0) = w_0(x) \geq 0, \quad x \in \R^N,
	\end{array}
	\right.
	\label{eq1.16}
\end{equation}
where $p > 1$ and $u_0 \in L^q(\R^N)$ with some $q \geq 1$. 
When $(N+2)/N < p < (N+2)/(N-2)$, 
it was shown by Haraux-Weissler \cite{HaWe} that, if $1 \leq q < N(p-1)/2$, 
uniqueness does not hold 
in the class $C([0, T], L^q(\R^N)$ for the initial data $u_0 \equiv 0$. 
More precisely, they proved the existence of a positive self-similar solution $w$ 
satisfying $\|w(\cdot, t)\|_{L^q(\R^N)} \to 0$ as $t \to 0$. 

Let us consider the so-called doubly critical case $q = p = N(P-1)/2$, that is, $q = p = N/(N-2)$. 
Ni and Sacks \cite{NiSa} studied the Cauchy problem 
\begin{equation}
	\left\{
	\begin{array}{ll}
	\partial_t w = \Delta w + w^{N/(N-2)}, & \quad x \in B, \ t > 0, 
	\\[1ex]
	w = 0, & \quad x \in \partial B, \ t > 0,
	\\[1ex]
	w(x, 0) = w_0(x) \geq 0, & \quad x \in B,
	\end{array}
	\right.
	\label{eq1.17}
\end{equation}
where $B$ is the unit ball in $\R^N$ with $N \geq 3$.  
In \cite{NiSa} they constructed a singular positive stationary solution $w^*$ that belongs 
to $L^{N/(N-2)}(B)$. 
Moreover, it was shown by \cite{Weic} that the problem (\ref{eq1.17}) 
with $w_0 = w^*$ has a positive bounded solution 
$w \in C([0, T], L^{N/(N-2)}(B))\cap L^{\infty}_{\rm loc}((0, T), L^{\infty}(B))$ with some $T > 0$. 
Therefore, uniqueness of positive solution does not hold in the class $C([0, T], L^{N/(N-2)}(B))$.  
For the problem (\ref{eq1.16}) with $p = N/(N-2)$, 
nonuniqueness of positive solutions in $C([0, T], L^{N/(N-2)}(\R^N))$ 
was proved by Terraneo \cite{Ter}. 
Later, infinitely many solutions of the problem 
with one initial data were constructed by Matos and Terraneo \cite{MaTe} and Takahashi \cite{Tak}.

We will show nonuniqueness of positive solutions of (\ref{eq1.16}) with $p = N/(N-2)$ by 
using singular self-similar solutions. 
To this end, we define a uniformly local $L^q$ space as follows: For $1 \leq q < \infty$, 
$$
	L^{q}_{\rm ul}(\R^N) = \left\{
	u \in L^q_{\rm loc}(\R^N): \|u\|_{L^q_{\rm ul}(\R^N)} < \infty \right\}, 
$$
where 
$$
	\|u\|_{L^q_{\rm ul}(\R^N)} = \sup_{y \in \R^N}\left(\int_{|x-y| < 1}|u(x)|^q dx\right)^{1/q}.
$$
Let ${\cal L}^q_{\rm ul}(\R^N)$ denote 
the closure of the space of bounded uniformly continuous functions $BUC(\R^N)$ in the space 
$L^q_{\rm ul}(\R^N)$, i.e., 
$$
	{\cal L}^q_{\rm ul}(\R^N) = \overline{BUC(\R^N)}^{\|\cdot\|_{L^q_{\rm ul}(\R^N)}}.
$$
Note here that, if $u \in {\cal L}^q_{\rm ul}(\R^N)$, we have 
\begin{equation}
	\lim_{|y| \to 0}\|u(\cdot+ y)-u(\cdot)\|_{L^q_{\rm ul}(\R^N)} = 0.
	\label{eq1.18}
\end{equation}
(See \cite[Proposition 2.2]{MaTera}.)
It was shown by \cite[Proposition 2.3]{FuIo} that, if $q = N(p-1)/2 > 1$, 
for any $w_0 \in {\cal L}^{q}_{\rm loc}(\R^N)$, 
there exists $T > 0$ such that (\ref{eq1.16}) has at least 
one classical solution $w \in C([0, T]; {\cal L}^{q}_{\rm ul}(\R^N))
\cap L^{\infty}_{\rm loc}((0, T); L^{\infty}(\R^N)) \cap C^{2,1}(\R^N\times (0, T))$. 

Let $p = N/(N-2)$, and let $w^*$ be a singular self-similar solution defined by (\ref{eq1.6}). 
Then $w^*$ satisfies (\ref{eq1.7}) and 
$$
	|x|^{N-2}w^*(x, t) = (|x|/\sqrt{t})^{N-2}u^*(|x|/ \sqrt{t}) \to \ell
	\quad \mbox{as} \ |x| \to \infty.
$$ 
Then $w^*(\cdot, t) \in {\cal L}^{N/(N-2)}_{\rm ul}(\R^N)$ for each fixed $t > 0$. 
Due to the property (\ref{eq1.18}), we see that $w^*(\cdot, t)$ is continuous in 
${\cal L}^{N/(N-2)}_{\rm ul}(\R^N)$ for $t > 0$. 
We consider the problem (\ref{eq1.16}) with $w_0(x) = w^*(x, t_0)$ for fixed $t_0 > 0$. 
Then $w(x, t) = w^*(x, t+t_0)$ satisfies 
$w \in C([0, \infty), {\cal L}^{N/(N-2)}_{\rm ul}(\R^N))$ and 
solves the problem (\ref{eq1.16}) for $t \geq 0$.  
Applying \cite[Proposition 2.3]{FuIo} with $q = N/(N-2)$, we see that 
the problem (\ref{eq1.16}) has at least one classical solution 
$w \in C([0, T]; {\cal L}^{N/(N-2)}_{\rm ul}(\R^N)) 
\cap L^{\infty}_{\rm loc}((0, T); L^{\infty}(\R^N))$ with some $T > 0$.  
Thus we obtain the following.

\begin{corollary}
\label{cor1.4}
Let $p = N/(N-2)$, and let $u^*$ be a singular positive solution of $(\ref{eq1.2})$. 
Define $w^*$ by $(\ref{eq1.6})$, and take any $t_0 > 0$. 
Then $(\ref{eq1.16})$ with $p = N/(N-2)$ and $w_0(x) = w^*(x, t_0)$ has at least two solutions 
in $C([0, T]; {\cal L}^{N/(N-2)}_{\rm ul}(\R^N))$ with some $T > 0$. 
Specifically, one solution is $w(x, t) = w^*(x, t+t_0)$ which is singular at $x = 0$ for all $t \geq 0$, 
and the other solution satisfies $(\ref{eq1.16})$ in the classical sense for 
$(x, t) \in \R^N \times (0, T)$. 
\end{corollary}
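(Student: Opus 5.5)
The corollary follows from assembling facts already gathered in the discussion before it. There are four steps: place the initial datum in the right space, exhibit the singular solution, produce a classical solution by \cite[Proposition 2.3]{FuIo}, and show the two solutions differ.

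\emph{The datum.} Fix $t_0>0$ and set $w_0=w^*(\cdot,t_0)$. By Theorem \ref{thm1.1}, in the form (\ref{eq1.7}), we have
$$
w_0(x)\sim C|x|^{-(N-2)}(-\log|x|)^{-(N-2)/2} \quad \mbox{as} \ |x|\to 0 ,
$$
with $C=((N-2)^2/2)^{(N-2)/2}$. Hence
$$
w_0^{N/(N-2)}\sim C'|x|^{-N}(-\log|x|)^{-N/2}.
$$
Since $N/2>1$, this is integrable near the origin. At infinity, (\ref{eq1.15}) together with Theorem \ref{thm1.3} gives $w_0(x)=O(|x|^{-(N-2)})$, so $w_0$ is bounded away from the origin. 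Therefore $w_0\in L^{N/(N-2)}_{\rm ul}(\R^N)$.

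To place $w_0$ in the closure ${\cal L}^{N/(N-2)}_{\rm ul}(\R^N)$, approximate it by the truncations $\min(w_0,k)$. These are bounded, and they are uniformly continuous because $u^*$ is continuous on $(0,\infty)$ and decays at infinity. The error $\|w_0-\min(w_0,k)\|_{L^{N/(N-2)}_{\rm ul}}$ is controlled by the integral of $w_0^{N/(N-2)}$ over the shrinking ball $\{w_0>k\}$, which tends to $0$.

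\emph{The singular solution.} Set $w_1(x,t)=w^*(x,t+t_0)$. By scaling, $w^*$ is a classical solution of (\ref{eq1.1}) on $(\R^N\setminus\{0\})\times(0,\infty)$. The same argument as above shows $w^*(\cdot,t)\in{\cal L}^{N/(N-2)}_{\rm ul}$ for each $t>0$.

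Continuity of $t\mapsto w^*(\cdot,t)$ in $L^{N/(N-2)}_{\rm ul}$ on $[t_0,t_0+T]$ follows from the self-similar form. Pointwise continuity in $t$ holds on $x\neq0$, and dominated convergence applies with the uniform majorant $C|x|^{-(N-2)}(-\log|x|)^{-(N-2)/2}$ near $0$, valid for $t$ in a compact subset of $(0,\infty)$. This is the role played by (\ref{eq1.18}) in the paper. Since the majorant is $L^{N/(N-2)}$-integrable near $0$, $w_1$ is also a distributional (integral) solution across $x=0$. By (\ref{eq1.7}), $w_1(\cdot,t)\notin L^\infty$ for every $t\ge0$.

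\emph{The classical solution.} Apply \cite[Proposition 2.3]{FuIo} with $q=N(p-1)/2=N/(N-2)>1$. This yields $w_2\in C([0,T];{\cal L}^{N/(N-2)}_{\rm ul})\cap L^\infty_{\rm loc}((0,T);L^\infty)$ solving (\ref{eq1.16}) classically on $\R^N\times(0,T)$. Then $w_2(\cdot,t)$ is bounded for each $t\in(0,T)$, while $w_1(\cdot,t)$ is unbounded, so $w_1\neq w_2$.

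\emph{Main obstacle.} The delicate point is verifying that $w_1$ is genuinely a solution in the same sense used for uniqueness, namely the integral (mild) formulation in ${\cal L}^{N/(N-2)}_{\rm ul}$. I would check this by multiplying by test functions and cutting off a small ball $B_\varepsilon$. The boundary terms are of order
$$
\varepsilon^{N-1}\left(|\nabla w^*|+|w^*|\right)\sim \varepsilon\,(-\log\varepsilon)^{-(N-2)/2},
$$
which tends to $0$. The derivative bound $|\nabla w^*|\sim\varepsilon^{-(N-1)}(-\log\varepsilon)^{-(N-2)/2}$ follows from the ODE (\ref{eq1.2}) and (\ref{eq1.5}). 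This shows that $w_1$ solves (\ref{eq1.16}) in the distributional sense, which is what the corollary asserts.
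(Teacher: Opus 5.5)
Your proposal is correct and takes the same route as the paper. You show $w^*(\cdot,t)\in\mathcal{L}^{N/(N-2)}_{\mathrm{ul}}(\R^N)$ from (\ref{eq1.7}) and the decay at infinity, prove continuity in $t$, take the singular solution $w^*(x,t+t_0)$, and obtain a classical solution from \cite[Proposition 2.3]{FuIo}; the two solutions differ because only the second is bounded for $t>0$, and you supply details the paper leaves implicit (the truncation argument, the majorant bounds, the absence of a point mass at $x=0$). One small slip: the gradient boundary term $\varepsilon^{N-1}|\partial_r w^*|$ is of order $(-\log\varepsilon)^{-(N-2)/2}$, not $\varepsilon(-\log\varepsilon)^{-(N-2)/2}$, but it still tends to $0$ since $N\ge 3$ (this is exactly $\rho(r)u'(r)\to 0$ from Lemma~\ref{lem4.1}), so your conclusion stands.
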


The paper is organized as follows. 
We introduce some transformations of the equation (\ref{eq1.2}) in Section 2, 
and we show the existence of infinitely many singular solutions in Section 3.
We give some preliminary lemmas for 
the study of the properties of singular solutions in Section 4, and  
we investigate the asymptotic behavior of singular solutions near the origin 
and give the proof of Theorems \ref{thm1.1} and \ref{thm1.2} in Section 5.
Finally, we show the bound of singular solutions at infinity and prove 
Theorem \ref{thm1.3} in Section 6. 


\section{Transformation of the equation}

In this section, we introduce some transformations of the equation (\ref{eq1.2}). 
Let $N/(N-2) < p \leq (N+2)/(N-2)$, and 
let $u$ be a positive solution of (\ref{eq1.2}) for $0 < r \leq r_0$. 
Define
\begin{equation}
	w(t) = r^{2/(p-1)}u(r) \quad \mbox{with} \ t = -\log r.
	\label{eq2.1}
\end{equation}
Then $w$ satisfies 
\begin{equation}
	w'' -a(t)w' -L^{p-1}w +  w^p = 0 \quad \mbox{for} \ t \geq t_0,
	\label{eq2.2}
\end{equation}
where $t_0 = -\log r_0$, 
\begin{equation}
	a(t) = N-2-\frac{4}{p-1} - \frac{1}{2}e^{-2t}
	\label{eq2.3}
\end{equation}
and $L$ is the constant defined in (\ref{eq1.3}). 
Note that $a(t) \leq 0$ for all $t \in \R$ if $p \leq (N+2)/(N-2)$.
For a solution $w$ of (\ref{eq2.2}), define 
\begin{equation}
	E(w)(t) = \frac{1}{2}w'(t)^2 + \Phi(w(t))
	\quad \mbox{for} \ t \geq t_0,
	\label{eq2.4}
\end{equation}
where $\Phi$ is defined by (\ref{eq1.8}). 
Then we have  
\begin{equation}
	E(w)'(t) = (w''(t) -L^{p-1}w + w^{p})w'(t) = a(t)w'(t)^2 \leq 0 \quad \mbox{for} \ t > t_0,
	\label{eq2.5}
\end{equation}
which implies that $E(w)(t)$ is nonincreasing for $t \geq t_0$.

Next, let us consider the case $p = N/(N-2)$.
Let $u(r)$ be a positive solution of (\ref{eq1.2}) for $0 < r \leq r_0$ with $r_0 < 1$,  
and define $w(t)$ by 
\begin{equation}
	w(t) = r^{N-2}t^{(N-2)/2}u(r) \quad \mbox{with} \ t = -\log r.
	\label{eq2.6}
\end{equation}
Then $w$ satisfies 
\begin{equation}
	tw'' +b(t)w' +B(t)w +  w^p = 0 \quad \mbox{for} \ t \geq t_0,
	\label{eq2.7}
\end{equation}
where $t_0 = -\log r_0$, 
\begin{equation}
	b(t) = (N-2)(t-1) -\frac{1}{2}te^{-2t}
	\label{eq2.8}
\end{equation}
and 
\begin{equation}
	B(t) = -\frac{(N-2)^2}{2} + \frac{N(N-2)}{4t} + \frac{N-2}{4}e^{-2t}.
	\label{eq2.9}
\end{equation}
For a solution $w$ of (\ref{eq2.7}), define 
\begin{equation}
	{\cal E}(w)(t) = \frac{1}{2}tw'(t)^2 + \frac{1}{2}B(t)w(t)^2 + \frac{1}{p+1}w(t)^{p+1}
	\quad \mbox{for} \ t \geq t_0,
	\label{eq2.10}
\end{equation}
Then we have 
\begin{equation}
	{\cal E}(w)'(t) = \left(\frac{1}{2}-b(t)\right)w'(t)^2 
	-\left(\frac{N(N-2)}{8t^2} + \frac{N-2}{4}e^{-2t}\right)w(t)^2
	\quad \mbox{for} \ t > t_0.
	\label{eq2.11}
\end{equation}
Since $te^{-2t} \leq \frac{1}{2e} < \frac{1}{2}$ for $t \geq 0$ and $N \geq 3$, we have
$$
	\frac{1}{2} -b(t) < \frac{1}{2} - (N-2)(t-1) + \frac{1}{4} < 0 \quad \mbox{for} \ t \geq 2.
$$
Then ${\cal E}(w)(t)$ is decreasing for $t \geq \max\{2, t_0\}$.
Define
\begin{equation}
	\Psi(v) = -\frac{(N-2)^2}{4}v^2 + \frac{1}{p+1}v^{p+1}
	\label{eq2.12}
\end{equation}
for $v \geq 0$. 
Then $\Psi(v)$ satisfies
\begin{equation}
	\Psi(v) \geq \Psi(K) = -\frac{1}{2N-2}K^{p+1}, 
	\quad \mbox{where} \ K = \left(\frac{(N-2)^2}{2}\right)^{(N-2)/2}. 
	\label{eq2.13}
\end{equation}
We see that $\Psi(v) < 0$ for $0 < v < B_0$, where 
\begin{equation}
	B_0 = \left(\frac{(N-1)(N-2)}{2}\right)^{(N-2)/N}.
	\label{eq2.14}
\end{equation}
Note that ${\cal E}(w)(t)$ can be written as 
\begin{equation}
	{\cal E}(w)(t) = \frac{1}{2}tw'(t)^2 + \Psi(w(t)) + 
	\left(\frac{N(N-2)}{8t} + \frac{N-2}{8}e^{-2t}\right)w(t)^2
	\quad \mbox{for} \ t \geq t_0
	\label{eq2.15}
\end{equation}
and we have ${\cal E}(w)(t) \geq \Psi(w(t))$ for $t \geq t_0$.

Let $p = N/(N-2)$, and let $u(r)$ be a positive solution of (1.2) for $r > 0$.
Define 
\begin{equation}
	z(s) = r^{2/(p-1)}u(r) = r^{N-2}u(r) \quad \mbox{with} \ s = \log r.
	\label{eq2.16}
\end{equation}
Then $z$ satisfies 
\begin{equation}
	z'' + \left(-(N-2)+ \frac{1}{2}e^{2s}\right)z' +  z^p = 0 \quad \mbox{for} \ s \geq s_0,
	\label{eq2.17}
\end{equation}
where $s_0 = \log r_0$. 
The equation (\ref{eq2.17}) can be written as 
\begin{equation}
	(\eta(s)z')' + \eta(s)z^p = 0, \quad \mbox{where} \quad 
	\eta(s) = \exp\left(-(N-2)s + \frac{1}{4}e^{2s}\right).
	\label{eq2.18}
\end{equation}


\section{Existence of infinitely many singular solutions}

In this section we will show the existence of infinitely many singular positive 
solutions of (\ref{eq1.2}).
For $p \geq (N+2)/N$, define a positive constant 
\begin{equation}
	C_0 = \left(\frac{N}{2}-\frac{1}{p-1}\right)^{1/(p-1)}.
	\label{eq3.1}
\end{equation}
The following result says that (\ref{eq1.2}) has a continuum of singular positive solutions 
in the case $N/(N-2) < p \leq (N+2)/(N-2)$. 

\begin{proposition}
\label{prp3.1}
Let $N/(N-2) < p \leq (N+2)/(N-2)$. 
Choose $r_0 > 0$ so that $r_0^2 > 2/(p-1)$.
Take any $\alpha$ satisfying 
\begin{equation}
	0 < \alpha < \min\{r_0^{-2/(p-1)}C_0, A_0\}
	\label{eq3.2}
\end{equation}
where $A_0$ and $C_0$ are constants defined by $(\ref{eq1.9})$ and $(\ref{eq3.1})$, 
respectively.
If $u$ is a solution of $(\ref{eq1.2})$ satisfying 
\begin{equation}
	u(r_0) = \alpha r_0^{-2/(p-1)} \quad \mbox{and} \quad 
	u'(r_0) = -\frac{2\alpha}{p-1} r_0^{-(p+1)/(p-1)},
	\label{eq3.3}
\end{equation}
then the solution $u(r)$ is positive for all $r > 0$ and satisfies 
$u(r) \to \infty$ as $r \to 0$.
\end{proposition}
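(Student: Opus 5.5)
The plan is to treat the two half-lines separately: on $(0,r_0]$ via the transformation $(\ref{eq2.1})$ and the Lyapunov function $E$, and on $[r_0,\infty)$ via monotonicity of $u$ together with a Sturm/Wronskian comparison. The left side looks routine. The main obstacle is global positivity on $[r_0,\infty)$, specifically matching the hypotheses $r_0^2>2/(p-1)$ and $\alpha<r_0^{-2/(p-1)}C_0$ to a comparison that actually closes.

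\textbf{Behaviour for $0<r\le r_0$.} Set $w(t)=r^{2/(p-1)}u(r)$ with $t=-\log r$, as in $(\ref{eq2.1})$, and $t_0=-\log r_0$. Since $w'(t)=-r\frac{d}{dr}\bigl(r^{2/(p-1)}u\bigr)$, the data $(\ref{eq3.3})$ give $w(t_0)=\alpha$ and $w'(t_0)=0$ (the two terms cancel). Hence $E(w)(t_0)=\Phi(\alpha)<0$, because $0<\alpha<A_0$. By $(\ref{eq2.5})$, $E(w)$ is nonincreasing, so as long as the solution exists and is positive,
$$\Phi(w(t))\le E(w)(t)\le \Phi(\alpha)<0 .$$
Since $\Phi(v)\to0$ as $v\to0$ and $\Phi(v)>0$ for $v>A_0$, this confines $w(t)$ to a compact interval $[\delta,A_1]\subset(0,A_0)$ with $\delta>0$. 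The bound $\frac12 w'^2\le \Phi(\alpha)-\Phi(w)$ also bounds $w'$. Therefore $w$ exists and stays in $[\delta,A_1]$ for all $t\ge t_0$, so $u(r)=r^{-2/(p-1)}w(-\log r)\ge \delta r^{-2/(p-1)}\to\infty$ as $r\to0$.

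\textbf{Behaviour for $r\ge r_0$.} Write $(\ref{eq1.2})$ in divergence form
$$(\rho u')'=-\rho\Bigl(\tfrac{1}{p-1}u+u^p\Bigr),\qquad \rho(r)=r^{N-1}e^{r^2/4}.$$
\begin{itemize}
\item First, $u'(r_0)<0$, and as long as $u>0$ the right-hand side is negative, so $\rho u'$ decreases. Hence $u'<0$ on $[r_0,r_1)$, where $r_1\le\infty$ is the first zero of $u$. In particular $u\le u(r_0)$ there.
\item I then intend to use the hypothesis $\alpha<r_0^{-2/(p-1)}C_0$ to guarantee $u^{p-1}<C_0^{p-1}=\frac N2-\frac1{p-1}$ on $[r_0,r_1)$. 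Thus $u$ solves a linear equation $(\rho u')'+\rho q(r)u=0$ with potential $q=\frac1{p-1}+u^{p-1}<\frac N2$.
\item The comparison function is $\psi=e^{-r^2/4}$, which is positive for all $r$ and satisfies $(\rho\psi')'+\frac N2\rho\psi=0$.
\end{itemize}

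\textbf{Wronskian argument.} Suppose $r_1<\infty$. Consider $W=\rho(u'\psi-u\psi')$. Then
$$W'=\rho\psi u\Bigl(\tfrac N2-q\Bigr)>0\quad\text{on }(r_0,r_1).$$
At $r_1$ we have $W(r_1)=\rho u'(r_1)\psi(r_1)\le0$, while
$$W(r_0)=\rho\psi u(r_0)\Bigl(\frac{u'(r_0)}{u(r_0)}+\frac{r_0}{2}\Bigr)=\rho\psi u(r_0)\Bigl(-\frac{2}{(p-1)r_0}+\frac{r_0}{2}\Bigr).$$
So the argument closes if $r_0^2\ge 4/(p-1)$, giving $W(r_0)\ge 0$ and contradicting $W'>0$ with $W(r_1)\le0$. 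The step that does not yet close is this sign: the stated hypothesis $r_0^2>2/(p-1)$ is weaker than $r_0^2\ge4/(p-1)$. If it fails, I would first try replacing $\psi$ by a modified comparison function, e.g. $\psi=r^{-\beta}e^{-r^2/4}$ or a Kummer-type function, whose logarithmic derivative at $r_0$ lies below $-2/((p-1)r_0)$ under $r_0^2>2/(p-1)$ while still dominating the potential $q$. Failing that, I would work directly with $z=ru'+\frac{2}{p-1}u$ to show $z$ keeps a sign. Once $r_1=\infty$ is established, positivity holds on $[r_0,\infty)$, and combined with the left side this proves the proposition.
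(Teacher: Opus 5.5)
Your argument is the paper's own. The part on $(0,r_0]$ is Lemma \ref{lem3.2}. On $[r_0,\infty)$, your Wronskian with $\psi=e^{-r^2/4}$ is $W=\rho(u'\psi-u\psi')=r^{N-1}u'+\frac{r^N}{2}u$, which is exactly the quantity integrated in (\ref{eq3.4}), and $W'=-r^{N-1}F(u)$. So your contradiction argument is Lemma \ref{lem3.3}.

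The sign obstruction you found is genuine, and the paper does not resolve it either. Its proof of this proposition explicitly uses ``$r_0^2>4/(p-1)$'' to get $u'(r_0)+\frac{r_0}{2}u(r_0)>0$, so the hypothesis $r_0^2>2/(p-1)$ in the statement should be read as $r_0^2>4/(p-1)$. Under that reading your proof is complete and no modified comparison function is needed. The range $2/(p-1)<r_0^2\le 4/(p-1)$ is not proved in the paper. Nothing later depends on it, since Theorems \ref{thm1.1} and \ref{thm1.2} only need some admissible $r_0$, which can be taken large.

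You left one step as an intention, and it also needs the corrected hypothesis. From (\ref{eq3.2}) and (\ref{eq3.3}) you get $u(r_0)=\alpha r_0^{-2/(p-1)}<r_0^{-4/(p-1)}C_0$. This implies $u(r_0)<C_0$ when $r_0\ge 1$, but not otherwise. With $r_0^2>4/(p-1)$ the condition $r_0\ge1$ is automatic, because $p\le (N+2)/(N-2)$ gives $4/(p-1)\ge N-2\ge 1$.

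Under the literal hypothesis $u(r_0)<C_0$ can actually fail. Take $N=3$, $p=5$ and $r_0^2$ slightly above $1/2$. Then $A_0=(3/4)^{1/4}$ and $r_0^{-1/2}\approx 2^{1/4}$, so (\ref{eq3.2}) allows $u(r_0)$ up to about $1.107$. But $C_0=(5/4)^{1/4}\approx 1.057$. In that case $q(r_0)=\frac{1}{p-1}+u(r_0)^{p-1}>N/2$, and the assumption in your second bullet ($q<N/2$ on $[r_0,r_1)$) is not available.
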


In order to prove Proposition \ref{prp3.1}, we need some lemmas.

\begin{lemma}
\label{lem3.2}
Let $w$ be a solution of $(\ref{eq2.2})$ with some $t_0 \in \R$, 
and define $E(w)$ by $(\ref{eq2.4})$. 
Assume that $w(t_0) > 0$ and $E(w)(t_0) < 0$. 
Then $w(t) \geq C$ for all $t \geq t_0$ with some constant $C > 0$. 
\end{lemma}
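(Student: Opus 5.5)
We will use the monotonicity of the energy in (\ref{eq2.5}): since $a(t) \le 0$ for $p \le (N+2)/(N-2)$, $E(w)$ is nonincreasing. Hence
$$
\Phi(w(t)) \le E(w)(t) \le E(w)(t_0) =: -\delta < 0 \quad \mbox{for all } t \ge t_0
$$
for as long as the solution exists.

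The first step is to show that $w$ stays positive. Suppose $w$ had a first zero $t_1 > t_0$. Then $\Phi(w(t_1)) = \Phi(0) = 0$, so $E(w)(t_1) = \frac12 w'(t_1)^2 \ge 0$. This contradicts $E(w)(t_1) \le -\delta < 0$, so $w(t) > 0$ wherever $w$ is defined on $[t_0,\infty)$.

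The second step is the lower bound. The function $\Phi$ is continuous on $[0,\infty)$ with $\Phi(0)=0$. Therefore there is $C \in (0, A_0)$ such that $\Phi(v) > -\delta$ for $0 \le v \le C$. Concretely, since $\Phi(v) \ge -\frac{L^{p-1}}{2}v^2$, we may take $C = \min\{A_0/2, (\delta/L^{p-1})^{1/2}\}$. If $w(t) \le C$ for some $t \ge t_0$, then $\Phi(w(t)) > -\delta \ge E(w)(t)$, which is impossible because $E(w)(t) \ge \Phi(w(t))$. Hence $w(t) > C$ for all $t \ge t_0$.

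We should also check global existence on $[t_0,\infty)$, so that the statement makes sense for all $t$. The bound $\Phi(w) \le -\delta < 0$ forces $0 < w < A_0$, since $\Phi(v) \ge 0$ for $v \ge A_0$. In addition, $\frac12 w'^2 \le E(w)(t_0) - \Phi(w) \le -\Phi(L)$ by (\ref{eq1.10}). Thus $w$ and $w'$ remain bounded, and the linear-growth ODE (\ref{eq2.2}) with smooth coefficients cannot blow up in finite time.

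No step is a real obstacle. The only delicate point is the order of the argument: positivity must be established first, so that $w^p$ is defined and the energy identity holds up to any putative zero. The monotonicity of $E$ holds on each interval where $w > 0$, and a continuity argument closes the loop.
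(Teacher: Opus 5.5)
Your proof is correct and is essentially the paper's argument: the monotonicity of $E(w)$ from (\ref{eq2.5}) gives $\Phi(w(t)) \le E(w)(t_0) < 0$, and because $\Phi$ is continuous with $\Phi(0)=0$, this keeps $w$ bounded away from $0$. Your first-zero argument, the explicit choice of $C$, and the global-existence check make explicit what the paper leaves implicit; in that check, the operative point is not ``linear growth'' but that $(w,w')$ stays in a compact subset of $\{w>0\}$, so the solution continues to all $t \ge t_0$.
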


\begin{proof}
From (\ref{eq2.5}) we see that $E(w)(t)$ is nonincreasing for $t \geq t_0$. 
Then $E(w)(t) \leq E(w)(t_0)$ for $t \geq t_0$. 
In particular, we obtain $\Phi(w(t)) \leq E(w)(t_0) < 0$ for $t \geq t_0$. 
Since $\Phi(0) = \Phi(A) = 0$ and $\Phi(v) < 0$ for $0 < v < A_0$, 
there exists a constant $C > 0$ such that $w(t) \geq C$ for $t \geq t_0$. 
\end{proof}

Let us consider the case $p > (N+2)/N$. 
Multiplying the both sides of (\ref{eq1.2}) by $r^{N-1}$, we obtain 
$$
	(r^{N-1}u')' + \left(\frac{r^Nu}{2}\right)' + r^{N-1}F(u(r)) = 0,
$$
where 
$$
	F(v) = -\left(\frac{N}{2}- \frac{1}{p-1}\right)v + v^p \quad \mbox{for} \ v \geq 0.
$$
Integrating the above on $[r_0, r]$ with $r_0 > 0$, we obtain 
\begin{equation}
	\left(r^{N-1}u'(r) + \frac{r^N u(r)}{2}\right) - 
	\left(r_0^{N-1}u'(r_0) + \frac{r_0^N u(r_0)}{2}\right) 
	+ \int^r_{r_0}s^{N-1}F(u(s))ds = 0.
	\label{eq3.4}
\end{equation}
Note that $\frac{N}{2}- \frac{1}{p-1} > 0$ if $p > (N+2)/N$, and that 
$F(v) < 0$ if and only if $0 < v < C_0$, where $C_0$ is defined by (\ref{eq3.1}).

\begin{lemma}
\label{lem3.3}
Let $p > (N+2)/N$. 
Assume that a solution $u$ of $(\ref{eq1.2})$ satisfies  
\begin{equation}
	u'(r_0) < 0, \quad u'(r_0) + \frac{r_0}{2}u(r_0) > 0 
	\quad \mbox{and} \quad 0 < u(r_0) < C_0
	\label{eq3.5}
\end{equation}
with some $r_0 > 0$, where $C_0$ is the constant defined by $(\ref{eq3.1})$. 
Then $u(r)$ exists and satisfies $u(r) > 0$ for all $r \geq r_0$. 
\end{lemma}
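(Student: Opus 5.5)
The plan is a continuation argument built on the integral identity (\ref{eq3.4}). Set
$$
	G(r) = r^{N-1}u'(r) + \frac{r^N u(r)}{2} = r^{N-1}\left(u'(r) + \frac{r}{2}u(r)\right),
$$
so that (\ref{eq3.4}) reads $G(r) = G(r_0) - \int_{r_0}^r s^{N-1}F(u(s))\,ds$. By (\ref{eq3.5}) we have $G(r_0) > 0$. I would define $R$ as the supremum of those $r > r_0$ such that the solution exists on $[r_0, r)$ and satisfies $u > 0$ and $u' < 0$ there. Then I show $R = \infty$.

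On $[r_0, R)$ the function $u$ is decreasing, so $0 < u(r) \le u(r_0) < C_0$. Hence $F(u(r)) < 0$, since $F<0$ exactly on $(0,C_0)$ (valid because $p > (N+2)/N$). By (\ref{eq3.4}), $G$ is increasing, so $G(r) > G(r_0) > 0$, which means $u'(r) > -\frac{r}{2}u(r)$. Integrating this differential inequality gives the explicit lower bound
$$
	u(r) \geq u(r_0)\exp\left(-\frac{r^2-r_0^2}{4}\right) > 0 \quad \mbox{on} \ [r_0, R).
$$
In particular $u$ is bounded above by $C_0$ and below by a positive quantity on every bounded subinterval.

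Next I rule out $R < \infty$. First, the solution cannot blow up at a finite $R$: $0<u<C_0$ is bounded, and $-\frac{r}{2}u < u' < 0$ gives a bound on $u'$. So $(u,u')$ stays bounded and the solution extends past $R$. At $r = R$, the lower bound above gives $u(R) > 0$, so by continuity $u > 0$ slightly beyond $R$. The only remaining way the defining property could fail is $u'(R) = 0$ with $u' < 0$ on $[r_0, R)$. In that case the equation (\ref{eq1.2}) gives
$$
	u''(R) = -\frac{1}{p-1}u(R) - u(R)^p < 0.
$$
Then $u'$ would be strictly decreasing near $R$, and since $u'(R)=0$ this forces $u' > 0$ just to the left of $R$, a contradiction. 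Hence $R = \infty$, and $u$ exists and is positive (indeed decreasing with $0<u<C_0$) for all $r \ge r_0$.

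The only delicate point is organizing the bootstrap. The sign of $F(u)$ needs $u < C_0$, which comes from monotonicity; monotonicity needs $u>0$ (to get $u''<0$ at a critical point); and positivity comes from $G>0$, which in turn needs $F(u)<0$. Running all three through the single maximal interval $[r_0,R)$ as above closes the loop, and I expect no further obstacle.
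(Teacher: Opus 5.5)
Your proof is correct and follows essentially the same route as the paper: both rest on identity (\ref{eq3.4}) with $G(r_0)>0$, the sign $F(u)<0$ on $(0,C_0)$, and monotonicity of $u$ to keep $u<C_0$. The differences are minor. The paper gets $u'<0$ directly from $(\rho u')'<0$ while $u>0$, and excludes a first zero $r_1$ by evaluating (\ref{eq3.4}) there, where the bracket reduces to $r_1^{N-1}u'(r_1)\le 0$. You instead use a critical-point argument and the lower bound coming from $(e^{r^2/4}u)'>0$, and you also make explicit the no-blow-up/global-existence step that the paper leaves implicit.
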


\begin{proof}
Assume that $u(r) > 0$ for $r_0 \leq r < r_1$ with some $r_1 \leq \infty$. 
Note that (\ref{eq1.2}) can be written as 
$$
	(\rho(r)u')' + \rho(r)\left(\frac{2}{p-1}u +u^p\right) = 0 \quad \mbox{for} \ r > 0,
$$
where $\rho(r) = r^{N-1}e^{r^2/4}$. 
Then we have $(\rho(r)u')' < 0$ for $r_0 \leq r < r_1$, 
and hence $\rho(r)u'(r)$ is decreasing for $r_0 \leq r < r_1$. 
Since $u'(r_0) < 0$, we have $u'(r) < 0$ for $r_0 \leq r < r_1$. 
We will show that $r_1 = \infty$. 
Assume by contradiction that $r_1 < \infty$ and $u$ satisfies 
$u(r) > 0$ for $r_0 \leq r < r_1$ and $u(r_1) = 0$. 
Letting $r = r_1$ in (\ref{eq3.4}), we have 
\begin{equation}
	r_1^{N-1}u'(r_1) - 
	\left(r_0^{N-1}u'(r_0) + \frac{r_0^N u(r_0)}{2}\right) 
	+ \int^{r_1}_{r_0}s^{N-1}F(u(s))ds = 0.
	\label{eq3.6}
\end{equation}
We have $u'(r) < 0$ and $0 < u(r) \leq u(r_0) < C_0$ for $r_0 \leq r < r_1$, 
and hence $F(u(r)) < 0$ for $r_0 \leq r < r_1$. 
Then, from (\ref{eq3.6}) we have
$$
	\left(r_0^{N-1}u'(r_0) + \frac{r_0^N u(r_0)}{2}\right)
	= r_1^{N-1}u'(r_1) + \int^{r_1}_{r_0}s^{N-1}F(u(s))ds < 0.
$$
This implies that $u'(r_0) + (r_0/2)u(r_0) < 0$, which contradicts (\ref{eq3.5}).  
Thus we obtain $r_1 = \infty$ and $u(r) > 0$ for all $r \geq r_0$. 
\end{proof}

\begin{proof}[Proof of Proposition \ref{prp3.1}] 
Take $r_0 > 0$ and $\alpha > 0$ such that $r_0^2 > 2/(p-1)$ and 
(\ref{eq3.2}) holds.
Define $w(t)$ by (\ref{eq2.1}). 
From (\ref{eq3.3}) we have $w(t_0) = r_0^{2/(p-1)}u(r_0) = \alpha$ and 
$$
	\frac{d}{dt}w(t_0) = 
	-\left(\frac{2}{p-1}r_0^{2/(p-1)}u(r_0) + r_0^{(p+1)/(p-1)}\frac{du}{dr}(r_0)\right)
	= 0.
$$
Define $E(w)$ by (\ref{eq2.4}). Since $\alpha < A_0$ from (\ref{eq3.2}), we have 
$$
	E(w)(t_0) = \frac{1}{2}w'(t_0)^2 + \Phi(w(t_0)) = \Phi(\alpha) < 0.
$$
From Lemma \ref{lem3.2} we have $w(t) > C$ for $t \geq t_0$ with some constant $C > 0$, 
which implies that 
\begin{equation}
	u(r) \geq Cr^{-2/(p-1)} \quad \mbox{for} \ 0 < r \leq r_0 = e^{-t_0}
	\quad \mbox{and} \quad u(r) \to \infty \quad \mbox{as} \ r \to 0.
	\label{eq3.7}
\end{equation}
From (\ref{eq3.2}) and (\ref{eq3.3}) we have $0 < u(r_0) < C_0$ and $u'(r_0) < 0$.  
From $r_0^2 > 4/(p-1)$ it follows that 
$$
	u'(r_0) + \frac{r_0}{2}u(r_0) = \alpha\left(-\frac{2}{p-1} + \frac{r_0^2}{2}\right)
	r_0^{(p+1)/(p-1)} > 0.
$$
Lemma \ref{lem3.3} implies that $u(r) > 0$ for $r \geq r_0$. 
Combining this and (\ref{eq3.7}), we have $u(r) > 0$ for all $r > 0$ and $u(r) \to \infty$ as $r \to 0$.
\end{proof}

Next we consider the case $p = N/(N-2)$. 
We first show the following lemma.

\begin{lemma}
\label{lem3.4}
Let $\eta(s)$ be defined in $(\ref{eq2.18})$. 
Then 
$$
	\int^{\infty}_{s_0} \frac{1}{\eta(\sigma)}\int^{\sigma}_{s_0}\eta(\tau)d\tau d\sigma < \infty
	\quad \mbox{for} \ s_0 \in \R.
$$
\end{lemma}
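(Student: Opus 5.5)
We need to show $\int_{s_0}^\infty \eta(\sigma)^{-1}\int_{s_0}^\sigma \eta(\tau)\,d\tau\,d\sigma<\infty$, where $\eta(s)=\exp(-(N-2)s+\tfrac14 e^{2s})$. The plan is to bound the inner integral by a multiple of $\eta(\sigma)$ times a factor that decays like $e^{-2\sigma}$ for large $\sigma$. The outer integrand is then integrable at infinity; on any compact interval $[s_0,S]$ it is continuous, hence bounded, since $\eta$ is positive and continuous.

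The key observation is that $\log\eta(s)=-(N-2)s+\tfrac14 e^{2s}$ has derivative
$$\frac{\eta'(s)}{\eta(s)}=-(N-2)+\frac12 e^{2s},$$
which tends to $+\infty$. Fix $S_1\ge s_0$ with $\tfrac12 e^{2s}-(N-2)\ge \tfrac14 e^{2s}$ for $s\ge S_1$, i.e. $e^{2S_1}\ge 4(N-2)$. For $\tau\ge S_1$ this gives
$$\eta(\tau)\le 4e^{-2\tau}\eta'(\tau)\le 4e^{-2S_1}\eta'(\tau).$$
Hence for $\sigma\ge S_1$,
$$\int_{S_1}^\sigma\eta(\tau)\,d\tau\le 4\int_{S_1}^\sigma e^{-2\tau}\eta'(\tau)\,d\tau .$$
We integrate by parts: $e^{-2\tau}$ is decreasing, so the boundary term at $\sigma$ is $e^{-2\sigma}\eta(\sigma)$ and the remaining terms are either nonpositive or equal to $8\int e^{-2\tau}\eta\,d\tau$. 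If this last term is awkward to absorb, a simpler route is to split $[S_1,\sigma]$ at $\sigma-1$. On $[\sigma-1,\sigma]$ we have $\eta'\ge0$, so $\eta(\tau)\le 4e^{-2\tau}\eta'(\tau)\le 4e^{2}e^{-2\sigma}\eta'(\tau)$, which integrates to at most $4e^{2}e^{-2\sigma}\eta(\sigma)$. On $[S_1,\sigma-1]$, $\eta$ is increasing, so the contribution is at most $\sigma\,\eta(\sigma-1)$.

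The ratio bound comes next:
$$\frac{\eta(\sigma-1)}{\eta(\sigma)}=e^{N-2}\exp\!\Big(-\tfrac14 e^{2\sigma}(1-e^{-2})\Big),$$
which decays super-exponentially, so $\sigma\,\eta(\sigma-1)/\eta(\sigma)$ is integrable at infinity. The piece $\int_{s_0}^{S_1}\eta$ is a constant $c$, and $c/\eta(\sigma)$ is integrable at infinity because $1/\eta$ decays doubly exponentially. Putting these together,
$$\frac{1}{\eta(\sigma)}\int_{s_0}^\sigma\eta\le \frac{c}{\eta(\sigma)}+4e^{2}e^{-2\sigma}+\sigma\,\frac{\eta(\sigma-1)}{\eta(\sigma)},$$
and every term on the right is integrable on $[S_1+1,\infty)$. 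This proves the lemma.

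The only delicate step is the Laplace-type estimate, i.e. controlling $\int^\sigma\eta$ by $\eta(\sigma)$ times a decaying factor. The splitting at $\sigma-1$ handles it, and the remaining steps are routine.
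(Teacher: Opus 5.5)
Your proof is correct. The overall strategy matches the paper's: the finite piece $\int_{s_0}^{S_1}\eta$ contributes $c/\eta(\sigma)$, which is harmless because $1/\eta$ decays doubly exponentially, and the rest of the inner integral is bounded by a constant times $e^{-2\sigma}\eta(\sigma)$.

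The difference is in how you get that estimate. The paper factors $\eta(\sigma)=e^{-N\sigma+\frac18 e^{2\sigma}}\cdot e^{2\sigma+\frac18 e^{2\sigma}}$. The first factor is increasing for $\sigma\ge \frac12\log(4N)$, so it can be pulled out at the endpoint. The second factor is exactly $\frac{d}{d\sigma}\bigl(4e^{\frac18 e^{2\sigma}}\bigr)$. Together these give $\int_{s_1}^{s}\eta\le 4e^{-2s}\eta(s)$ in one line, with no splitting.

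You use only the log-derivative bound $\eta\le 4e^{-2\tau}\eta'$ and monotonicity. That forces the window split at $\sigma-1$ and the separate super-exponential estimate for $\eta(\sigma-1)/\eta(\sigma)$. It is less economical, but more robust: it works for any increasing weight with $(\log\eta)'\gtrsim e^{2\tau}$, without needing an exact antiderivative.

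The integration by parts you abandoned actually closes at once. With $X=\int_{S_1}^{\sigma}\eta$ it gives $X\le 4e^{-2\sigma}\eta(\sigma)+8e^{-2S_1}X$. Choosing $S_1$ with $e^{2S_1}\ge 16$ and absorbing yields $X\le 8e^{-2\sigma}\eta(\sigma)$. That is essentially the paper's bound, and it makes the split at $\sigma-1$ and the ratio estimate unnecessary.
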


\begin{proof}
Let $s_1 = (1/2)\log(4N)$. 
It suffices to show the case $s_0 < s_1$. 
We have 
$$
	\int_{s_0}^s \eta(\sigma)d\sigma = \int_{s_0}^{s_1} + \int_{s_1}^s \eta(\sigma)d\sigma 
	= C_1 + \int^{s}_{s_1}e^{-N\sigma + 
	\frac{1}{8}e^{2\sigma}}e^{2\sigma + \frac{1}{8}e^{2\sigma}}d\sigma
$$
for $s > s_1$.
Since $e^{-Ns + \frac{1}{8}e^{2s}}$ is increasing for $s \geq s_1$, we have 
$$
	\int^{s}_{s_1}e^{-N\sigma + \frac{1}{8}e^{2\sigma}}e^{2\sigma + \frac{1}{8}e^{2\sigma}}d\sigma
	\leq e^{-Ns + \frac{1}{8}e^{2s}}\int^{s}_{s_1}e^{2\sigma + \frac{1}{8}e^{2\sigma}}d\sigma
	\leq 4 e^{-Ns + \frac{1}{8}e^{2s}}e^{\frac{1}{8}e^{2s}} = 4\eta(s)e^{-2s}
$$
for $s > s_1$. 
Thus we have 
$$
	\int_{s_0}^s \eta(\sigma)d\sigma \leq C_1 + 4\eta(s)e^{-2s} \quad \mbox{for} \ s > s_1.
$$
It follows that 
$$
	\int^{\infty}_{s_0} \frac{1}{\eta(\sigma)}\int^{\sigma}_{s_0}\eta(\tau)d\tau d\sigma 
	\leq \int^{\infty}_{s_0}\left(\frac{C_1}{\eta(\sigma)} + 4e^{-2\sigma}\right) d\sigma  < \infty.
$$
Thus we obtain the conclusion.
\end{proof}

\begin{lemma}
\label{lem3.5}
For any $s_0 \in \R$, there exists a constant $D_0 > 0$ such that, if 
$z$ is a solution of $(\ref{eq2.18})$ satisfying 
$z(s_0) = \alpha$ and $z'(s_0) = 0$ with some $0 < \alpha \leq D_0$, 
then the solution $z$ satisfies 
$z(s) > 0$ for all $s \geq s_0$.
\end{lemma}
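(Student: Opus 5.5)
We write the equation (\ref{eq2.18}) in integrated form. Since $z'(s_0)=0$, integrating $(\eta z')' = -\eta z^p$ from $s_0$ to $s$ gives
\begin{equation*}
	z'(s) = -\frac{1}{\eta(s)}\int^s_{s_0}\eta(\tau)z(\tau)^p\,d\tau ,
\end{equation*}
and a second integration yields
\begin{equation*}
	z(s) = \alpha - \int^s_{s_0}\frac{1}{\eta(\sigma)}\int^\sigma_{s_0}\eta(\tau)z(\tau)^p\,d\tau\,d\sigma .
\end{equation*}
Set
\begin{equation*}
	M = \int^{\infty}_{s_0} \frac{1}{\eta(\sigma)}\int^{\sigma}_{s_0}\eta(\tau)\,d\tau\, d\sigma ,
\end{equation*}
which is finite by Lemma \ref{lem3.4}. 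We then choose $D_0>0$ so small that $D_0^{p-1}M<1$, for instance $D_0=(2M)^{-1/(p-1)}$.

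First, as long as $z>0$ on $[s_0,s]$, the formula for $z'$ shows $z'\le 0$ there, so $0<z\le\alpha$. Plugging this bound into the second identity gives, for $0<\alpha\le D_0$,
\begin{equation*}
	z(s) \ \ge\ \alpha-\alpha^p M \ =\ \alpha(1-\alpha^{p-1}M) \ \ge\ \alpha/2 \ >\ 0 .
\end{equation*}

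Second, we run a continuation argument. Let $s_1=\sup\{s> s_0: z>0 \text{ on } [s_0,s)\}$, where the set is nonempty because $z(s_0)=\alpha>0$. Suppose $s_1<\infty$. On $[s_0,s_1)$ the solution satisfies $0<z\le\alpha$ and $|z'|$ is bounded by the formula above, so the solution of the linear-in-derivative ODE cannot blow up and extends past $s_1$. The estimate $z\ge\alpha/2$ then holds up to $s_1$ by continuity, so $z(s_1)\ge \alpha/2>0$, which contradicts the definition of $s_1$. Hence $s_1=\infty$ and $z(s)\ge\alpha/2>0$ for all $s\ge s_0$.

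The only substantive input is the finiteness of the double integral, which Lemma \ref{lem3.4} supplies; the rest is routine. The one point to watch is that $z^p$ must stay controlled, and this is handled by the a priori monotonicity $z\le\alpha$ on the positivity interval.
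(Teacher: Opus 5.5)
Your proof is correct, but it takes a different route from the paper. You work directly with the given solution. You read $z'\le 0$ off the once-integrated equation, which gives $z\le\alpha$ while $z>0$. Feeding this into the twice-integrated identity gives $z\ge\alpha-\alpha^pM\ge\alpha/2$, and a continuation argument finishes. The paper instead sets $D_0=(2pC_\eta)^{-1/(p-1)}$, with $C_\eta$ equal to your $M$. It applies the contraction mapping principle to
$\mathcal{F}z(s)=\alpha-\int_{s_0}^{s}\eta(\sigma)^{-1}\int_{s_0}^{\sigma}\eta(\tau)z(\tau)^p\,d\tau\,d\sigma$
on the closed set $\{z\in C[s_0,\infty):\alpha/2\le z\le\alpha\}$. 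This produces a global solution with $\alpha/2\le z\le\alpha$, and identifying it with the solution in the statement tacitly uses uniqueness for the initial value problem.

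Both arguments rest on the same input, the finiteness of the double integral from Lemma \ref{lem3.4}.

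What your route buys:
\begin{itemize}
\item It needs no Lipschitz estimate for $z^p$, so you avoid the extra factor $p$ in $D_0$ and get a slightly larger admissible range of $\alpha$.
\item It applies directly to the solution named in the statement, with no appeal to IVP uniqueness.
\end{itemize}
The price is the continuation step. There, say explicitly that $s_1$ is taken within the maximal existence interval. Also say that extension past $s_1$ uses the fact that $(z,z')$ stays in a compact set with $z\ge\alpha/2>0$, where $z^p$ is smooth; this matters for exponents such as $p=3/2$, where $z^p$ is undefined for $z<0$.

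What the paper's route buys: it delivers global existence and the two-sided bound at once, with no discussion of maximal intervals.
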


\begin{proof}
Define $D_0 > 0$ by 
$$
	D_0 = (2pC_{\eta})^{-1/(p-1)}, \quad 
	\mbox{where} \quad 
	C_{\eta} = \int^{\infty}_{s_0} \frac{1}{\eta(\sigma)}\int^{\sigma}_{s_0}\eta(\tau)d\tau d\sigma.
$$
Let $0 < \alpha \leq D_0$. 
Define the set $Z \subset C[s_0, \infty)$ and the mapping ${\cal F}:Z \to C[s_0, \infty)$ by
$$
	Z = \{z \in C[s_0, \infty): {\textstyle \frac{\alpha}{2}} \leq z(s) 
	\leq \alpha \quad \mbox{for} \ s \geq s_0\}
$$
and 
$$
	{\cal F}z(s) = \alpha - \int^{s}_{s_0} \frac{1}{\eta(\sigma)}\int^{\sigma}_{s_0}\eta(\tau)z(\tau)^p
	d\tau d\sigma \quad \mbox{for} \ s \geq s_0,
$$
respectively. 
For $z \in Z$, we have 
$$
	0 \leq \int^{s}_{s_0} \frac{1}{\eta(\sigma)}\int^{\sigma}_{s_0}\eta(\tau)z(\tau)^p
	d\tau d\sigma \leq 
	\alpha^p \int^{\infty}_{s_0} \frac{1}{\eta(\sigma)}\int^{\sigma}_{s_0}\eta(\tau)
	d\tau d\sigma
	\leq D_0^{p-1}C_{\eta}\alpha < \frac{1}{2}\alpha.
$$
Then we obtain  $\frac{\alpha}{2} \leq {\cal F}(z)(s) \leq \alpha$ for $s \geq s_0$, 
and hence ${\cal F}z \in Z$ for any $z \in Z$. 
For $z_1, z_2 \in Z$, we have 
$$
	\begin{array}{rcl}
	|{\cal F}(z_1)(s)-{\cal F}(z_2)(s)|& \leq & \dsp
	\int^{\infty}_{s_0} \frac{1}{\eta(\sigma)}\int^{\sigma}_{s_0}\eta(\tau)
	|z_1(\tau)^p-z_2(\tau)^p| d\tau d\sigma
	\\[3ex]
	& \leq & \dsp  
	\left(\int^{\infty}_{s_0} \frac{1}{\eta(\sigma)}\int^{\sigma}_{s_0}
	\eta(\tau)p\alpha^{p-1} d\tau d\sigma\right)\sup_{s \geq s_0}|z_1(s)-z_2(s)|
	\\[3ex]
	& \leq & \dsp  
	pC_{\eta}D_0^{p-1}\sup_{s \geq s_0}|z_1(s)-z_2(s)| 
	\leq \frac{1}{2}\sup_{s \geq s_0}|z_1(s)-z_2(s)|
	\end{array}
$$
for $s \geq s_0$. 
Then we obtain 
$\sup_{s \geq s_0}|{\cal F}z_1(s)-{\cal F}z_2(s)| \leq \frac{1}{2}\sup_{s \geq s_0}|z_1(s)-z_2(s)|$. 
By the contractive fixed point theorem, there exists a unique $z \in Z$ such that 
${\cal F}z = z$, i.e., 
$$
	z(s) = \alpha - \int^s_{s_0} \frac{1}{\eta(\sigma)}\int^{\sigma}_{s_0}\eta(\tau)z(\tau)^p
	d\tau d\sigma \quad \mbox{for} \ s \geq s_0.
$$
Then $z$ is a solution of (\ref{eq2.18}) 
satisfying $0 < \frac{1}{2}\alpha \leq z(s) \leq \alpha$ for $s \geq s_0$.
\end{proof}

The next proposition says that (\ref{eq1.2}) has a continuum of singular positive solutions 
in the case $p = N/(N-2)$. 

\begin{proposition}
\label{prp3.6}
Let $p = N/(N-2)$. 
Take any $\alpha_0 > 0$ such that $\Psi(\alpha_0) < 0$, that is $0 < \alpha_0 < B_0$, 
where $B_0$ is the constant defined in $(\ref{eq2.14})$.
Choose $t_0 \geq 2$ so large that 
\begin{equation}
	\frac{(N-2)^2}{8t_0}\alpha_0^2+ \Psi(\alpha_0) +
	\left(\frac{N(N-2)}{8t_0} + \frac{N-2}{8}e^{-2t_0}\right)\alpha_0^2 < 0
	\label{eq3.8}
\end{equation}
and 
\begin{equation}
	t_0^{-(N-2)/2}\alpha_0 \leq D_0,
	\label{eq3.9}
\end{equation}
where $\Psi$ is defined by $(\ref{eq2.12})$ and $D_0$ is the constant in Lemma $3.5$. 
Let $r_0 = e^{-t_0}$. 
If $u$ is a solution of $(\ref{eq1.2})$ satisfying 
\begin{equation}
	u(r_0) = r_0^{-(N-2)}t_0^{-(N-2)/2}\alpha_0 
	\quad \mbox{and} \quad 
	u'(r_0) = -(N-2)r_0^{-(N-1)}t_0^{-(N-2)/2}\alpha_0,
	\label{eq3.10}
\end{equation}
then the solution $u(r)$ is positive for all $r > 0$ and satisfies 
$u(r) \to \infty$ as $r \to 0$.
\end{proposition}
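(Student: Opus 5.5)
The proof splits into two halves, as in Proposition \ref{prp3.1}. For $r \le r_0$ we use the transformation (\ref{eq2.6}) and the energy ${\cal E}$. For $r \ge r_0$ we use the transformation (\ref{eq2.16}) together with Lemma \ref{lem3.5}.

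\emph{Behaviour near the origin.} Define $w(t) = r^{N-2}t^{(N-2)/2}u(r)$ with $t = -\log r$, so $w$ solves (\ref{eq2.7}) for $t \ge t_0$. By (\ref{eq3.10}), $w(t_0) = \alpha_0$. To compute $w'(t_0)$, differentiate: with $dr/dt = -r$,
$$
w'(t) = \frac{N-2}{2t}w - \bigl((N-2)r^{N-2}u + r^{N-1}u'\bigr)t^{(N-2)/2}.
$$
The bracket vanishes at $t_0$ by (\ref{eq3.10}), so $w'(t_0) = \frac{N-2}{2t_0}\alpha_0$. Hence
$$
\tfrac12 t_0 w'(t_0)^2 = \frac{(N-2)^2}{8t_0}\alpha_0^2 .
$$
By (\ref{eq2.15}), ${\cal E}(w)(t_0)$ is exactly the left-hand side of (\ref{eq3.8}), which is negative. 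Since $t_0 \ge 2$, ${\cal E}(w)$ is decreasing on the existence interval. The identity ${\cal E}(w) \ge \Psi(w)$ then gives $\Psi(w(t)) \le {\cal E}(w)(t_0) < 0$ as long as $w$ exists.

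This bound does two things. It keeps $w$ in a compact subset of $(0, B_0)$, so $w$ stays positive. It also bounds $w$ and, since $\frac12 t w'^2 \le {\cal E}(w)(t_0) - \Psi(w)$ is bounded above, it bounds $w'$ as well. So the solution extends to all $t \ge t_0$ with $w(t) \ge C > 0$. This is the analogue of Lemma \ref{lem3.2}, but now with the time-dependent energy, and is the one place needing care: the lower bound $\Psi(w) \le {\cal E}$ uses that the extra terms in (\ref{eq2.15}) are nonnegative. Consequently $u(r) \ge C r^{-(N-2)}(-\log r)^{-(N-2)/2} \to \infty$ as $r \to 0$.

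\emph{Positivity for $r \ge r_0$.} Set $z(s) = r^{N-2}u(r)$ with $s = \log r$ and $s_0 = \log r_0 = -t_0$. Then $z(s_0) = t_0^{-(N-2)/2}\alpha_0 \le D_0$ by (\ref{eq3.9}). Also $z'(s_0) = r_0\bigl((N-2)r_0^{N-2}u(r_0) + r_0^{N-1}u'(r_0)\bigr) = 0$ by the second condition in (\ref{eq3.10}). We apply Lemma \ref{lem3.5} at this $s_0$; here $D_0$ is the constant of Lemma \ref{lem3.5} for $s_0 = -t_0$.

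A subtle point is that $D_0$ depends on $s_0$, and hence on $t_0$, while (\ref{eq3.9}) is a condition on $t_0$ itself. We read it as the hypothesis that $t_0$ satisfies it, which is possible provided $C_\eta(s_0)$ grows slowly enough as $s_0 \to -\infty$. Indeed $\eta(s) \approx e^{-(N-2)s}$ there, so $\int_{s_0}^\sigma \eta/\eta(\sigma) \lesssim 1$ per unit length, and $C_\eta \sim |s_0| = t_0$. Thus $D_0 \sim t_0^{-1/(p-1)} = t_0^{-(N-2)/2}$. The requirement (\ref{eq3.9}) then reduces to $\alpha_0$ being small relative to a fixed constant; since the statement takes (\ref{eq3.9}) as a hypothesis, we may simply assume it.

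Uniqueness of the initial value problem identifies $z$ with the fixed-point solution of Lemma \ref{lem3.5}, so $z(s) > 0$ for all $s \ge s_0$, that is, $u(r) > 0$ for $r \ge r_0$. Combined with the first part, $u$ is positive on $(0,\infty)$ and singular at $0$.
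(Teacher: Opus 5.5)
Your proof is correct and follows the paper's route. On $r\le r_0$, ${\cal E}(w)(t_0)<0$ by (\ref{eq2.15}) and (\ref{eq3.8}), and ${\cal E}$ is nonincreasing for $t\ge 2$, which traps $w$ in a compact subset of $(0,B_0)$. On $r\ge r_0$, you pass to $z$ with $z(s_0)\le D_0$ and $z'(s_0)=0$, then apply Lemma \ref{lem3.5} at $s_0=-t_0$. You also make explicit the continuation of $w$ on $[t_0,\infty)$ and the IVP uniqueness behind Lemma \ref{lem3.5}, which the paper leaves implicit. Two small computational points, neither affecting anything:
\begin{itemize}
\item Your value $w'(t_0)=+\frac{N-2}{2t_0}\alpha_0$ is the correct one; the paper's minus sign is a slip, and only $w'(t_0)^2$ enters.
\item Your formula for $z'(s_0)$ has a stray factor $r_0$, which is harmless because the bracket vanishes.
\end{itemize}

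You should, however, retract your heuristic about the size of $D_0$. Near $-\infty$ the weight $\eta(s)\approx e^{-(N-2)s}$ is \emph{decreasing}. So for $\tau<\sigma\le 0$ the ratio $\eta(\tau)/\eta(\sigma)\approx e^{(N-2)(\sigma-\tau)}$ is exponentially large, not $O(1)$. To see the effect on $C_\eta$, write
$C_\eta(s_0)=\int_{s_0}^\infty\eta(\tau)\int_\tau^\infty\eta(\sigma)^{-1}\,d\sigma\,d\tau$.
The inner integral equals $\int_{e^\tau}^\infty\rho^{N-3}e^{-\rho^2/4}\,d\rho$, which is bounded below by a positive constant for $\tau\le 0$. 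Hence $C_\eta(-t_0)\ge c\,e^{(N-2)t_0}$ and $D_0\le C e^{-(N-2)^2t_0/2}$.

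Consequently (\ref{eq3.9}) forces $\alpha_0\le C\,t_0^{(N-2)/2}e^{-(N-2)^2t_0/2}$. This condition gets \emph{worse} as $t_0$ grows, so it is not a $t_0$-independent smallness condition on $\alpha_0$. For a given $\alpha_0$ you cannot meet (\ref{eq3.8}) and (\ref{eq3.9}) by taking $t_0$ large. Instead, fix a moderate $t_0$, just large enough that the $O(1/t_0)$ terms in (\ref{eq3.8}) are dominated by $-\frac{(N-2)^2}{4}\alpha_0^2$, and then take $\alpha_0$ small. Your proof uses (\ref{eq3.9}) only as a hypothesis, so it stands, but the claim that (\ref{eq3.9}) ``reduces to $\alpha_0$ being small relative to a fixed constant'' is false.
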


\begin{proof}
Take $\alpha_0 > 0$  and $t_0 \geq 2$ such that (\ref{eq3.8}) and (\ref{eq3.9}) hold.
Define $w(t)$ by (\ref{eq2.6}). 
Then 
$$
	\frac{d}{dt}w(t) = r^{N-2}t^{(N-2)/2}\left(
	-(N-2)u(r) - \frac{(N-2)}{2t}u(r) - ru'(r)\right),
$$
where $t = -\log r$.
From (\ref{eq3.10}) we have $w(t_0) = \alpha_0$ and $w'(t_0) = -\frac{N-2}{2t_0}\alpha_0$.  
Define ${\cal E}(w)$ by (\ref{eq2.10}). 
From (\ref{eq2.15}) and (\ref{eq3.8}) we have 
$$
	{\cal E}(w)(t_0) = \frac{(N-2)^2}{8t_0}\alpha_0^2+ \Psi(\alpha_0) +
	\left(\frac{N(N-2)}{8t_0} + \frac{N-2}{8}e^{-2t_0}\right)\alpha_0^2 < 0.
$$
Since ${\cal E}(w)(t)$ is decreasing for $t \geq t_0 (\geq 2)$, we have  
$$
	0 > {\cal E}(w)(t_0) \geq {\cal E}(w)(t) \geq \Psi(w(t))
	\quad \mbox{for} \ t \geq t_0.
$$
By the similar argument as in the proof of Lemma \ref{lem3.2}, 
we obtain $w(t) > C$ for $t \geq t_0$ with some constant $C > 0$. 
This implies that $u(r) > 0$ for $0 < r \leq r_0$ and $u(r) \to \infty$ as $r \to 0$. 

Define $z(s)$ by (\ref{eq2.16}) for $s \geq s_0 = \log r_0$. 
Then $z$ satisfies (\ref{eq2.17}) and 
$$
	z(s_0) = t_0^{-(N-2)/2}\alpha_0 \leq D_0 
	\quad \mbox{and} \quad z'(s_0) = 0.
$$ 
From Lemma \ref{lem3.5}, we have $z(s) > 0$ for $s \geq s_0$, which implies that 
$u(r) > 0$ for $r \geq r_0$. 
Therefore, we have $u(r) > 0$ for all $r > 0$ and $u(r) \to \infty$ as $r \to 0$.
\end{proof}


\section{Preliminary results}

In this section we give some preliminary lemmas to investigate 
the asymptotic properties of singular solutions near the origin in the next section.
For simplicity, define $f(u)$ by 
\begin{equation}
	f(u) = \frac{1}{p-1}u + u^p \quad \mbox{for} \ u \geq 0.
	\label{eq4.1}
\end{equation}
Then (\ref{eq1.2}) can be written as
\begin{equation}
	(\rho(r)u')' + \rho(r)f(u) = 0 \quad \mbox{for} \ r > 0,
	\label{eq4.2}
\end{equation}
where $\rho(r) = r^{N-1}e^{r^2/4}$. 
First we show the following results.

\begin{lemma}
\label{lem4.1}
Let $p \geq N/(N-2)$, and 
let $u \in C^2(0, \infty)$ be a positive solution of $(\ref{eq1.2})$ for $r > 0$. 
Then $u'(r) \leq 0$ for all $r > 0$ and $\rho(r)u'(r) \to 0$ as $r \to 0$. 
Furthermore, $u$ satisfies
\begin{equation}
	-\rho(r)u'(r) = \int^r_0 \rho(s)f(u(s))ds
	\quad \mbox{for} \ 0 < r \leq r_0.
	\label{eq4.3}
\end{equation}
\end{lemma}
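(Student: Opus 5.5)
Everything rests on the divergence form (\ref{eq4.2}): since $u>0$ we have $f(u)>0$, so $(\rho u')'=-\rho f(u)<0$ and $\rho(r)u'(r)$ is strictly decreasing on $(0,\infty)$. Hence the limit
$$
	m := \lim_{r\to 0}\rho(r)u'(r) \in (-\infty, \infty]
$$
exists. I will first show $m\le 0$, then $m=0$; monotonicity then gives $u'\le 0$ and integration gives (\ref{eq4.3}).

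\emph{Step 1: $m\le 0$.} Suppose $m>0$. Then $\rho(r)u'(r)\ge c>0$ on some $(0,r_1]$, so $u'(r)\ge c\,r^{1-N}e^{-r^2/4}\ge c' r^{1-N}$ there. Integrating from $r$ to $r_1$ gives $u(r_1)-u(r)\ge c''(r^{2-N}-r_1^{2-N})\to\infty$ as $r\to 0$, since $N\ge 3$. Thus $u(r)\to-\infty$, contradicting $u>0$. So $m\le 0$. Because $\rho u'$ decreases to values below its limit at $0$, we get $\rho(r)u'(r)<m\le 0$, and hence $u'(r)<0$ for every $r>0$.

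\emph{Step 2: $m=0$.} This is the main step. Suppose $m<0$. Then $-\rho u'\ge |m|$ near $0$, so $u(r)\ge c\,r^{2-N}$ for small $r$, by the same integration as before (now $u$ decreases in $r$, so it grows as $r\to 0$). Integrating $(\rho u')'=-\rho f(u)$ on $[r,r_0]$ gives
$$
	\rho(r)u'(r)=\rho(r_0)u'(r_0)+\int_r^{r_0}\rho f(u)\,ds .
$$
The left side tends to the finite limit $m$, so $\int_0^{r_0}\rho(s)f(u(s))\,ds<\infty$. But $\rho f(u)\ge c\, s^{N-1}u^p\ge c\, s^{N-1-(N-2)p}$, and $N-1-(N-2)p\le -1$ exactly when $p\ge N/(N-2)$. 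This integral therefore diverges, a contradiction, so $m=0$. Here the hypothesis $p\ge N/(N-2)$ is used, and the borderline case $p=N/(N-2)$ gives exponent $-1$, which still produces a logarithmic divergence.

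\emph{Step 3: the identity (\ref{eq4.3}).} Letting $r\to 0$ in the integrated identity from Step 2, with $m=0$ and monotone convergence, gives
$$
	\int_0^{r_0}\rho f(u)\,ds=-\rho(r_0)u'(r_0)<\infty .
$$
The same argument with $r_0$ replaced by any $r\in(0,r_0]$ yields $-\rho(r)u'(r)=\int_0^r\rho(s)f(u(s))\,ds$, which is (\ref{eq4.3}). This also shows $u'\le 0$ directly.
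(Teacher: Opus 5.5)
Your proof is correct and follows essentially the same route as the paper. Both arguments use monotonicity of $\rho u'$ from the divergence form, rule out a positive limit at $0$ via non-integrability of $r^{1-N}$, and rule out a negative limit via $u \ge c\,r^{2-N}$ together with divergence of $\int_0 s^{N-1-(N-2)p}\,ds$ when $p \ge N/(N-2)$. The only cosmetic difference is that you name the limit $m$ up front instead of arguing from a point where $u'>0$, which gives the slightly stronger conclusion $u'<0$ at no extra cost.
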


\begin{proof}
From (\ref{eq4.2}) we have $(\rho(r)u'(r))' = -\rho(r)f(u(r)) < 0$, 
and then $\rho(r)u'(r)$ is decreasing for $r > 0$. 
First we will show that $u'(r) \leq 0$ for all $r > 0$. 
Assume by a contradiction that there exists $r_1 > 0$ such that $u'(r_1) > 0$. 
Then we have $\rho(r)u'(r) > \rho(r_1)u'(r_1) = C_1 >0$ for $0 < r \leq r_1$, 
and hence 
$$
	u'(r) >  C_1e^{-r^2/4}r^{-(N-1)} \geq C_1e^{-r_1^2/4}r^{-(N-1)}
	\quad \mbox{for} \ 0 < r \leq r_1.
$$
Integrating the above on $[r, r_1]$ and letting $r \to 0$, we obtain 
$$
	u(r_1)-u(r) > C_1e^{-r_1^2/4}\int^{r_1}_r s^{-(N-1)}ds \to \infty \quad \mbox{as} \ r \to 0,
$$
which implies that $u(r) \to -\infty$ as $r \to 0$. 
This contradicts that $u(r)$ is positive for $r > 0$. 
Thus we obtain $u'(r) \leq 0$ for all $r > 0$.

Next we will show that 
\begin{equation}
	\lim_{r \to 0}\rho(r)u'(r) = 0.
	\label{eq4.4}
\end{equation}
Note that $\rho(r)u'(r)$ is decreasing and $\rho(r)u'(r) \leq 0$  for $r > 0$. 
Assume by contradiction that $\lim_{r \to 0}\rho(r)u'(r) = -c$ for some $c > 0$. 
Then we have $\rho(r)u'(r) < -c$ for $r > 0$.
Take any $r_2 > 0$. Then we have 
$$
	u'(r) < -c e^{-r^2/4}r^{-(N-1)} \leq -c e^{-r_2^2/4}r^{-(N-1)} 
	\quad \mbox{for} \ 0 < r \leq r_2. 
$$
Integrating the above on $[r, r_2]$, we obtain 
$$
	-u(r) < u(r_2)-u(r) 
	\leq -C_2r^{2-N} 
	\quad \mbox{for} \ 0 < r \leq \frac{r_2}{2} 
$$
with some constant $C_2 > 0$.
Then we obtain 
\begin{equation}
	u(r) \geq C_3r^{2-N} \quad \mbox{for} \ 0 < r \leq \frac{r_2}{2}
	\label{eq4.5}
\end{equation}
with $C_3 = 1/C_2 > 0$. 
Integrating (\ref{eq4.2}) on $[r_3, r]$, and letting $r_3 \to 0$, we obtain 
$$
	-r^{N-1}e^{r^2/4}u'(r)  -c = \int^r_0 s^{N-1}e^{s^2/4}f(u(s))ds 
	\geq \int^r_0 s^{N-1}u(s)^pds.
$$
From (\ref{eq4.5}) and $p \geq N/(N-2)$ we have
$$
	\int^r_0 s^{N-1}u(s)^pds \geq C_3^p\int^r_0 s^{N-1-p(N-2)}ds = \infty.
$$
Thus we have a contradiction, and hence (\ref{eq4.4}) holds. 
Integrating (\ref{eq4.2}) on $[r_3, r]$, and letting $r_3 \to 0$, 
we obtain (\ref{eq4.3}).
\end{proof}

\begin{lemma}
\label{lem4.2}
Let $N/(N-2) < p \leq (N+2)/(N-2)$, and let $u$ be a singular positive solution of $(\ref{eq1.2})$. 
Define $w(t)$ by $(\ref{eq2.1})$ for $t \geq t_0$.  
Then $w(t)$, $w'(t)$ and $w''(t)$ are bounded on $[t_0, \infty)$.
\end{lemma}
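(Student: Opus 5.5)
The plan is to obtain the two-sided information $u(r) \leq C r^{-2/(p-1)}$ and $-u'(r) \leq C r^{-(p+1)/(p-1)}$ for $0 < r \leq r_0$ directly from the integral identity (\ref{eq4.3}) of Lemma \ref{lem4.1}. Then $w$ and $w'$ are bounded because they are explicit combinations of these quantities. Boundedness of $w''$ will then follow from equation (\ref{eq2.2}).

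\emph{Step 1: upper bound for $u$.} This is the step I expect to be the main obstacle, since a priori a singular solution could blow up faster than $U_L$. By Lemma \ref{lem4.1}, $u' \leq 0$, so $u(s) \geq u(r)$ for $0 < s \leq r$. Since $\rho(s) \geq s^{N-1}$ and $f(u) \geq u^p$, identity (\ref{eq4.3}) gives
$$
	-\rho(r)u'(r) \geq \int_0^r s^{N-1}u(s)^p ds \geq \frac{r^N}{N}u(r)^p .
$$
With $\rho(r) \leq r^{N-1}e^{r_0^2/4}$ on $(0, r_0]$, this yields $-u'(r) \geq c\, r\, u(r)^p$ for some $c > 0$. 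Equivalently,
$$
	\frac{d}{dr}\bigl(u(r)^{1-p}\bigr) = (p-1)u^{-p}(-u') \geq (p-1)c\, r .
$$
Because $u(r) \to \infty$ as $r \to 0$, we have $u^{1-p} \to 0$ there. Integrating on $(0, r]$ then gives $u(r)^{1-p} \geq (p-1)c r^2/2$, that is, $u(r) \leq C r^{-2/(p-1)}$. Hence $w(t) = r^{2/(p-1)}u(r)$ is bounded on $[t_0, \infty)$.

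\emph{Step 2: bound for $u'$ and $w'$.} I will insert the bound of Step 1 into (\ref{eq4.3}) in the other direction:
$$
	-\rho(r)u'(r) \leq C\int_0^r s^{N-1}\bigl(s^{-2/(p-1)} + s^{-2p/(p-1)}\bigr)ds .
$$
The exponent satisfies $N - 1 - 2p/(p-1) > -1$ exactly when $p > N/(N-2)$, which is where the strict inequality on $p$ is used. So the integral is $\leq C r^{N - 2p/(p-1)}$ for $r \leq r_0$, and therefore
$$
	0 \leq -u'(r) \leq C r^{1 - 2p/(p-1)} = C r^{-(p+1)/(p-1)} .
$$
Since
$$
	w'(t) = -\left(\frac{2}{p-1}r^{2/(p-1)}u(r) + r^{(p+1)/(p-1)}u'(r)\right),
$$
both terms are bounded, and hence $w'$ is bounded on $[t_0, \infty)$.

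\emph{Step 3: bound for $w''$.} From (\ref{eq2.2}) we have $w'' = a(t)w' + L^{p-1}w - w^p$. On $[t_0, \infty)$ the coefficient $a(t)$ from (\ref{eq2.3}) is bounded, since $e^{-2t} \leq e^{-2t_0}$. Combined with Steps 1 and 2, this gives boundedness of $w''$ and completes the proof.
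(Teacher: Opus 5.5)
Your proof is correct, but it takes a different route from the paper.

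\textbf{The paper's route.} It works only at the level of $w$. By $(\ref{eq2.5})$ the energy $E(w)=\frac12 w'^2+\Phi(w)$ is nonincreasing, so $\Phi(L)\le E(w)(t)\le E(w)(t_0)$. Coercivity of $\Phi$ then bounds $w$, and $\frac12 w'(t)^2\le E(w)(t_0)-\Phi(L)$ bounds $w'$. The bound on $w''$ then comes from $(\ref{eq2.2})$, exactly as in your Step 3.

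\textbf{Your route.} You work with the radial equation for $u$ directly. You use the monotonicity $u'\le 0$ and the zero-flux identity $(\ref{eq4.3})$ from Lemma \ref{lem4.1} to obtain the Keller--Osserman-type inequality $-u'\ge c\,r\,u^p$. Integrating $(u^{1-p})'$ gives $u\le Cr^{-2/(p-1)}$, and feeding this back into $(\ref{eq4.3})$ bounds $u'$.

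\textbf{What each approach buys.}
\begin{itemize}
\item The paper's argument is a two-line consequence of the Lyapunov structure it needs anyway in Section 5. It uses only that $w$ is a nonnegative solution of $(\ref{eq2.2})$ on $[t_0,\infty)$, so it needs no global information on $u$.
\item However, it depends essentially on $a(t)\le 0$, that is, on $p\le (N+2)/(N-2)$. Its bounds also depend on the particular solution through $E(w)(t_0)$.
\item Your argument needs Lemma \ref{lem4.1}, hence positivity of $u$ on all of $(0,\infty)$.
\item In exchange, it never uses $p\le (N+2)/(N-2)$, so it holds for every $p>N/(N-2)$. This includes the supercritical range, where $E$ is no longer monotone.
\item Your bounds are also universal. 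Step 1 gives $u(r)\le \bigl(2Ne^{r_0^2/4}/(p-1)\bigr)^{1/(p-1)}r^{-2/(p-1)}$ on $(0,r_0]$, so the bounds on $w,w',w''$ depend only on $N$, $p$ and $r_0$.
\end{itemize}
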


\begin{proof}
Define $E(w)(t)$ by (\ref{eq2.4}). 
From (\ref{eq2.5}) we see that $E(w)(t)$ is nonincreasing for $t \geq t_0$. 
Then $E(w)(t)$ is bounded for $t \geq t_0$, 
which implies that $w'(t)$ and $w(t)$ is bounded for $t \geq t_0$. 
Note that $a(t)$, defined by (\ref{eq2.3}), is bounded on $[t_0, \infty)$.
From (\ref{eq2.2}), $w''(t)$ is also bounded on $[t_0, \infty)$. 
\end{proof}

Lemma \ref{lem4.2} implies that $r^{2/(p-1)}u(r)$ is bounded near the origin. 
Furthermore, we obtain the following.

\begin{lemma}
\label{lem4.3}
Let $N/(N-2) < p \leq (N+2)/(N-2)$, 
and let $u$ be a singular positive solution of $(\ref{eq1.2})$. 
Then 
\begin{equation}
	\limsup_{r \to 0}r^{2/(p-1)}u(r) > 0.
	\label{eq4.6}
\end{equation}
\end{lemma}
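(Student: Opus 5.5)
The plan is to argue by contradiction in the variable $w(t)=r^{2/(p-1)}u(r)$, $t=-\log r$, from (\ref{eq2.1}). Suppose (\ref{eq4.6}) fails. Since $w>0$, this means $w(t)\to 0$ as $t\to\infty$. I would show that this forces $w$ to decay at essentially the rate of the ``regular'' mode $e^{-2t/(p-1)}$. That means $u$ grows at most like $r^{-\delta}$ for small $\delta>0$, and then (\ref{eq4.3}) shows that $u$ is bounded near $0$, contradicting $u(r)\to\infty$.

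\textbf{Step 1 (linearization).} The linear part of (\ref{eq2.2}) at $w=0$ is $w''-a_\infty w'-L^{p-1}w$, where $a_\infty=N-2-4/(p-1)\le 0$. Its characteristic roots are
\[
\lambda_-=-\frac{2}{p-1}, \qquad \lambda_+=N-2-\frac{2}{p-1}>0 .
\]
Indeed $\lambda_-+\lambda_+=a_\infty$ and $\lambda_-\lambda_+=-L^{p-1}$ by (\ref{eq1.3}). These roots correspond to $u\sim\mathrm{const}$ and $u\sim r^{2-N}$, respectively.

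\textbf{Step 2 (decay by comparison).} Fix a small $\varepsilon>0$ and choose $T$ so large that $w^{p-1}\le\varepsilon$ and $|a(t)-a_\infty|\le\varepsilon$ for $t\ge T$; this is possible by (\ref{eq2.3}). Then $w$ satisfies
\[
w''-a(t)w'-(L^{p-1}-\varepsilon)w\ge 0 \qquad \mbox{for } t\ge T ,
\]
a linear inequality with a positive zeroth-order coefficient, to which the maximum principle applies. As comparison functions I take
\[
\varphi(t)=Ae^{-\mu(t-T)}+\eta e^{\nu(t-T)},
\]
where $\mu$ and $\nu$ are perturbations of $-\lambda_-$ and $\lambda_+$ chosen so that $\varphi''-a(t)\varphi'-(L^{p-1}-\varepsilon)\varphi\le 0$ on $[T,\infty)$. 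Here $\mu=2/(p-1)-\delta$, with $\delta\to 0$ as $\varepsilon\to 0$. Take $A=w(T)$ and let $\eta>0$ be arbitrary. Since $w$ is bounded by Lemma \ref{lem4.2}, we have $w\le\varphi$ at $t=T$ and at $t=T'$ for $T'$ large. The maximum principle applied to $w-\varphi$ gives $w\le\varphi$ on $[T,T']$. Letting $T'\to\infty$ and then $\eta\to 0$ yields
\[
w(t)\le Ce^{-(2/(p-1)-\delta)t}, \qquad \mbox{that is,}\qquad u(r)\le Cr^{-\delta}\ \mbox{ near } 0 .
\]

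\textbf{Step 3 (bootstrap).} Insert this bound into (\ref{eq4.3}). With $\delta$ small we get
\[
-\rho(r)u'(r)\le C\int_0^r s^{N-1}\bigl(s^{-\delta}+s^{-p\delta}\bigr)\,ds\le Cr^{N-p\delta}.
\]
Hence $|u'(r)|\le Cr^{1-p\delta}$, which is integrable at $0$, so $u$ stays bounded as $r\to 0$. This contradicts $u$ being singular and proves (\ref{eq4.6}).

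The main obstacle is Step 2: one has to check carefully that suitable $\mu$ and $\nu$ exist with the variable coefficient $a(t)$, and that the maximum principle is applicable. Both rely on the zeroth-order coefficient $-(L^{p-1}-\varepsilon)$ being strictly negative and on $w$ being bounded, which excludes growth of the $e^{\lambda_+t}$ mode.
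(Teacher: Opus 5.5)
Your proof is correct. You reach the key bound $u(r)=O(r^{-\delta})$ by a different route than the paper, and the concluding bootstrap through (\ref{eq4.3}) is the same.

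The paper first shows $w'(t)<0$ for large $t$, using $(e^{-\int_{t_1}^t a}w')'>0$ and $w\to 0$. This means $r^{2/(p-1)}u(r)$ is increasing near $0$. It then inserts $r^2f(u)<\vep u$ into (\ref{eq4.3}) and uses this monotonicity to get $\int_0^r s^{N-3}u(s)\,ds\le r^{2/(p-1)}u(r)\int_0^r s^{N-3-2/(p-1)}\,ds$. This gives $-ru'\le\sigma u$, so $r^\sigma u$ is nondecreasing.

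You instead apply the maximum principle to the linearized operator with exponential barriers. The step you flagged as the main obstacle is in fact easy:
\begin{itemize}
\item For the decaying term, note $a(t)\le a_\infty$ for all $t$ and $\mu>0$. Take $\mu$ to be the positive root of $\mu^2+a_\infty\mu-(L^{p-1}-\vep)=0$. Then $\mu^2+a(t)\mu-(L^{p-1}-\vep)\le 0$ everywhere, and $\mu\to 2/(p-1)$ as $\vep\to 0$.
\item For the growing term, any sufficiently small $\nu>0$ works, because $a$ is bounded on $[T,\infty)$ and $L^{p-1}-\vep>0$.
\item Boundedness of $w$ follows directly from $w\to 0$, so Lemma \ref{lem4.2} is not needed.
\end{itemize}

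What each route buys:
\begin{itemize}
\item Your approach gives a nearly sharp decay rate. It also shows the mechanism directly: the only decaying mode of the linearization at $0$ is the regular one. It needs no preliminary monotonicity step.
\item The paper's approach needs no barriers or characteristic roots. Its structure also transfers, with logarithmic weights, to $p=N/(N-2)$ (Lemma \ref{lem4.5}). There $L=0$, so the strictly positive zeroth-order coefficient your maximum principle relies on disappears.
\end{itemize}

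Incidentally, in the bootstrap you correctly track $u^p=O(r^{-p\delta})$. The paper writes this loosely as $f(u)=O(r^{-\sigma})$, which is harmless since the exponent can be taken small.
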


\begin{proof}
Assume by contradiction that 
\begin{equation}
	\lim_{r \to 0}r^{2/(p-1)}u(r) = 0.
	\label{eq4.7}
\end{equation}
First we will show that 
\begin{equation}
	(r^{2/(p-1)}u(r))' > 0 \quad \mbox{for} \ 0 < r \leq r_1
	\label{eq4.8}
\end{equation}
with some $r_1 > 0$. 
Define $w(t)$ by (\ref{eq2.1}). 
Then $w$ satisfies (\ref{eq2.2}). 
From (\ref{eq4.7}) we have $w(t) \to 0$ as $t \to \infty$. 
Then there exists $t_1 \geq t_0$ such that
$$
	-L^{p-1} +  w(t)^{p-1} < 0 \quad \mbox{for} \ t \geq t_1,
$$
where $L > 0$ is the constant defined in (\ref{eq1.3}). 
From (\ref{eq2.2}) we obtain $w''(t) - a(t)w'(t) > 0$ for $t \geq t_1$, and hence 
$$
	(e^{-\int^t_{t_1}a(s)ds}w'(t))' > 0 \quad \mbox{for} \ t \geq t_1. 
$$
This implies that $e^{-\int^t_{t_1}a(s)ds}w'(t)$ is increasing for $t \geq t_1$. 
Then we have either $w'(t) < 0$ for all $t \geq t_1$ or $w'(t) > 0$ for $t \geq t_2$ 
with some $t_2 \geq t_1$. 
Since $w(t) > 0$ and $w(t) \to 0$ as $t \to \infty$, the former case has to hold. 
Thus we obtain $w'(t) < 0$ for all $t \geq t_1$, 
which implies that (\ref{eq4.8}) holds with $r_1 = e^{-t_1}$.

From (\ref{eq4.1}) and (\ref{eq4.7}) we have 
$$
	\frac{r^2f(u(r))}{u(r)} = r^2\left(\frac{1}{p-1} + u(r)^{p-1}\right) 
	=  \frac{r^2}{p-1} + (r^{2/(p-1)}u(r))^{p-1} \to 0 \quad \mbox{as} \ r \to 0.
$$
Then, for $\vep > 0$ to be determined later, there exists $r_2 \in (0, r_1]$ such that
$$
	r^2f(u(r)) < \vep u(r) \quad \mbox{for} \ 0 < r \leq r_2.
$$
From (\ref{eq4.8}) we have
$$
	\begin{array}{rcl}
	\displaystyle 
	\int^r_0 s^{N-1}e^{s^2/4}f(u(s))ds & < &
	\displaystyle 
	\vep e^{r^2/4} \int^r_0 s^{N-3}u(s)ds
	\\[2ex]
	& \leq &
	\displaystyle 
	\vep e^{r^2/4} r^{2/(p-1)}u(r)\int^r_0 s^{N-3-2/(p-1)}ds 
	\\[2ex]
	& = &
	\displaystyle 
	\frac{e^{r^2/4}\vep}{N-2-\frac{2}{p-1}}r^{N-2}u(r)
	\end{array}
$$
for $0 < r \leq r_2$.
It follows from (\ref{eq4.3}) that  
\begin{equation}
	-r^{N-1}e^{r^2/4}u'(r) = \int^r_0 s^{N-1}e^{s^2/4}f(u(s))ds \leq 
	\frac{e^{r^2/4}\vep}{N-2-\frac{2}{p-1}}r^{N-2}u(r)
	\label{eq4.9}
\end{equation}
for $0 < r \leq r_2$.
Define  
$$
	\sigma = \frac{\vep}{N-2-\frac{2}{p-1}},
$$
and take $\vep > 0$ so small that $\sigma < 1$. 
From (\ref{eq4.9}) we have 
$$
	-ru'(r) \leq \sigma u(r) \quad \mbox{for} \ 0 < r \leq r_2,
$$
which implies that $(r^{\sigma}u(r))' \geq 0$ for $0 < r \leq r_2$. 
Then we obtain $r^{\sigma}u(r) \leq r_1^{\sigma}u(r_1)$ for $0 < r \leq r_2$, 
and hence $u(r) = O(r^{-\sigma})$ and $f(u(r)) = O(r^{-\sigma})$ as $r \to 0$. 
From (\ref{eq4.3}) we obtain 
$$
	-r^{N-1}e^{r^2/4}u'(r) 
	\leq e^{r^2/4}\int^r_0s^{N-1}f(u(s))ds = O(e^{r^2/4}r^{N-\sigma})
	\quad \mbox{as} \ r \to 0.
$$
Then we obtain $u'(r) = O(r^{1-\sigma})$ as $r \to 0$. 
Since $\sigma < 1$, we have $u'(r) \to 0$ as $r \to 0$, 
and hence $\lim_{r \to 0}u(r) < \infty$. 
This is a contradiction. 
Thus we obtain (\ref{eq4.6}).
\end{proof}

In the case $p = N/(N-2)$ we obtain the following.

\begin{lemma}
\label{lem4.4}
Let $p = N/(N-2)$, and let $u$ be a singular positive solution of $(\ref{eq1.2})$. 
Define $w(t)$ by $(\ref{eq2.6})$ for $t \geq t_0$ with $t_0 \geq 2$. 
Then $w(t)$, $tw'(t)^2$ and $tw'(t)w''(t)$ are bounded on $[t_0, \infty)$.
\end{lemma}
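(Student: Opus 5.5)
We mimic the proof of Lemma \ref{lem4.2}, with the energy ${\cal E}(w)$ from (\ref{eq2.10}) in place of $E(w)$. We take $t_0 \geq 2$, so that (\ref{eq2.11}) shows ${\cal E}(w)(t)$ is decreasing for $t \geq t_0$. Hence ${\cal E}(w)(t) \leq {\cal E}(w)(t_0)$ for all $t \geq t_0$.

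\textbf{Step 1: bound on $w$.} By (\ref{eq2.15}) and positivity of $w$ we have
$$
\Psi(w(t)) \leq {\cal E}(w)(t) \leq {\cal E}(w)(t_0).
$$
Since $\Psi(v) = -\frac{(N-2)^2}{4}v^2 + \frac{1}{p+1}v^{p+1} \to \infty$ as $v \to \infty$, this gives an upper bound $w(t) \leq C$ on $[t_0,\infty)$. The positivity of $w$ comes from $u>0$.

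\textbf{Step 2: bound on $tw'(t)^2$.} Again by (\ref{eq2.15}),
$$
\frac12 t w'(t)^2 \leq {\cal E}(w)(t_0) - \Psi(w(t)) \leq {\cal E}(w)(t_0) - \Psi(K),
$$
using the lower bound (\ref{eq2.13}) on $\Psi$. The coefficient of $w^2$ in (\ref{eq2.15}) is nonnegative, so dropping it only helps. This bounds $tw'^2$.

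\textbf{Step 3: bound on $tw'w''$.} From (\ref{eq2.7}) we have
$$
t w'' = -b(t)w' - B(t)w - w^p,
$$
and therefore
$$
t w' w'' = -b(t) w'^2 - B(t) w w' - w^p w'.
$$
We treat the three terms separately.
\begin{itemize}
\item[(i)] $B(t)$ is bounded on $[t_0,\infty)$, $w$ is bounded by Step 1, and $|w'| \leq C t^{-1/2}$ by Step 2. So $B w w'$ and $w^p w'$ are bounded, and in fact tend to $0$.
\item[(ii)] The delicate term is $b(t)w'^2$, because $b(t) \sim (N-2)t$ grows linearly. Writing $b(t)w'^2 = \frac{b(t)}{t}\cdot t w'^2$, the factor $b(t)/t$ is bounded for $t \geq t_0$ by (\ref{eq2.8}), and $tw'^2$ is bounded by Step 2. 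So this term is bounded as well.
\end{itemize}
This gives boundedness of $tw'w''$.

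The main (mild) obstacle is Step 2: we must make sure the lower bound on ${\cal E}$ in terms of $tw'^2$ is uniform. This requires $B(t)w^2/2 + w^{p+1}/(p+1)$ to be bounded below, which follows from (\ref{eq2.15}), (\ref{eq2.13}) and the nonnegativity of the extra $w^2$ term. One should also note that $t_0 \geq 2$ can be taken without loss of generality, since $u$ is positive on all of $(0,\infty)$ and the bounds on any compact $t$-interval are trivial.
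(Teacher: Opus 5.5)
Your proposal is correct and follows essentially the same route as the paper. The paper uses the monotonicity of $\mathcal{E}(w)$ for $t \geq 2$ together with (\ref{eq2.15}) to bound $w$ and $tw'(t)^2$. It then multiplies (\ref{eq2.7}) by $w'$ and uses $b(t) = O(t)$ to bound $tw'w''$. Your version simply makes explicit the steps the paper leaves implicit: the uniform lower bound $\Psi \geq \Psi(K)$ from (\ref{eq2.13}), and the boundedness of the $B(t)ww'$ and $w^pw'$ terms.
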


\begin{proof}
Define ${\cal E}(w)(t)$ by (\ref{eq2.10}). 
Since ${\cal E}(w)(t)$ is decreasing for $t \geq t_0 (\geq 2)$,  
we have ${\cal E}(w)(t) \leq {\cal E}(w)(t_0)$ for $t \geq t_0$. 
Then, from (\ref{eq2.15}), it follows that  
$$
	{\cal E}(w)(t_0) \geq {\cal E}(w)(t) \geq \frac{1}{2}tw'(t)^2 + \Psi(w(t)) 
	\quad \mbox{for} \ t \geq t_0.
$$
This implies that $tw'(t)^2$ and $w(t)$ are bounded for $t \geq t_0$. 
Multiplying (\ref{eq2.7}) by $w'(t)$, we have 
$$
	tw''w' + b(t)(w')^2 + B(t)ww' + w^pw' = 0 
	\quad \mbox{for} \ t \geq t_0,
$$
where $b(t)$ and $B(t)$ are defined by (\ref{eq2.8}) and (\ref{eq2.9}), respectively. 
Since $tw'(t)^2$ is bounded for $t \in [t_0, \infty)$, 
we see that $b(t)w'(t)^2$ is bounded, and hence 
$tw'(t)w''(t)$ is also bounded for $t \in [t_0, \infty)$. 
\end{proof}

Lemma \ref{lem4.4} implies that $r^{2/(p-1)}(-\log(r))^{(N-2)/2}u(r)$ is bounded near the origin. 
Furthermore, we obtain the following.

\begin{lemma}
\label{lem4.5}
Let $p = N/(N-2)$, and let $u$ be a singular positive solution of $(\ref{eq1.2})$. 
Then 
\begin{equation}
	\limsup_{r \to 0}r^{N-2}(-(\log r))^{(N-2)/2)}u(r) > 0.
	\label{eq4.10}
\end{equation}
\end{lemma}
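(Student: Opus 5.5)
I will argue by contradiction, in the same way as Lemma \ref{lem4.3}. Suppose that $w(t) = r^{N-2}t^{(N-2)/2}u(r) \to 0$ as $t = -\log r \to \infty$. The goal is to show that $u$ then stays bounded near the origin, or at least grows more slowly than a singular solution can. The first step is to obtain a monotonicity property of a suitable weighted quantity near $r = 0$, in analogy with (\ref{eq4.8}). Working with $z(s) = r^{N-2}u(r)$, $s = \log r$, as in (\ref{eq2.16}), we have $(\eta z')' = -\eta z^p < 0$ by (\ref{eq2.18}). Hence $\eta z'$ is decreasing in $s$. From Lemma \ref{lem4.1}, $\rho u' \to 0$ as $r \to 0$, i.e. $r^{N-1}u'(r) = o(1)$. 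Therefore
\[
\eta(s) z'(s) = e^{\frac14 e^{2s}}\bigl((N-2)r^{N-2}u + r^{N-1}u'\bigr)\cdot(\text{bounded factor}).
\]
I need to check the sign carefully, but the plan is to show $z'(s) > 0$ for $s$ near $-\infty$, i.e. $r^{N-2}u(r)$ is increasing for small $r$. This should come from $\eta z'$ being decreasing, together with a limit at $s = -\infty$ that is nonnegative. That limit equals $\lim (N-2)r^{N-2}u(r) + o(1) \ge 0$.

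Second, I will use the assumption $w \to 0$ to get smallness of the nonlinearity relative to the linear term. Write
\[
r^2 u^{p-1} = (r^{N-2}u)^{2/(N-2)} = w(t)^{2/(N-2)}\, t^{-1} \to 0,
\]
and in fact this is $o(1/(-\log r))$. Now integrate (\ref{eq4.3}):
\[
-\rho u' = \int_0^r \rho f(u)\,ds \le e^{r^2/4}\int_0^r s^{N-1}\Bigl(\tfrac{1}{p-1}+u^{p-1}\Bigr)u\,ds .
\]
Bound $s^{N-3}\cdot s^2u^{p-1}$ by $\varepsilon\, s^{N-3}/(-\log s)$, and use the monotonicity of $s^{N-2}u(s) \le r^{N-2}u(r)$ from step one. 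This gives
\[
\int_0^r s^{N-1}u^p\,ds \le \varepsilon\, r^{N-2}u(r)\int_0^r \frac{ds}{s(-\log s)} .
\]
This integral diverges logarithmically, so the naive analogue of Lemma \ref{lem4.3} fails. This divergence is the main obstacle. To handle it, I will instead use the sharper decay rate. Take $w(t) \le \delta$ for $t \ge t_1$; then $s^{N-1}u^p = s^{-1}(-\log s)^{-N/2}w^p$. So
\[
\int_0^r s^{N-1}u^p\,ds \le \delta^p\int_t^\infty \tau^{-N/2}\,d\tau = C\delta^p t^{1-N/2},
\]
which is finite since $N \ge 3$. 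Hence
\[
-r^{N-1}u' \le C\delta^p t^{-(N-2)/2} + O(r^{N-2}u\, r^2).
\]

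Third, I will compare this with $r^{N-2}u = w\,t^{-(N-2)/2}$. We get
\[
-ru' \le (C\delta^{p-1}\cdot\delta/w + o(1))\,u ,
\]
which only helps if $w$ is not much smaller than $\delta$. So I would rather use the $z$-equation directly. From $\eta z' \to \ell_0 := \lim_{r\to0}(N-2)r^{N-2}u \ge 0$ and $(\eta z')' = -\eta z^p$, we have
\[
\eta z'(s) = \ell_0 - \int_{-\infty}^s \eta z^p\,d\sigma .
\]
Under the hypothesis $z \to 0$, we have $\ell_0 = 0$. The integral $\int_{-\infty}^s \eta z^p$ is finite by the estimate above. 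So $z'(s) \le 0$, i.e. $z$ is nonincreasing in $s$. Since $z > 0$ and $z \to 0$ as $s \to -\infty$, this is impossible unless $z \equiv 0$. That contradicts positivity, and it gives (\ref{eq4.10}). The delicate points are justifying that $\eta z' \to \ell_0$ (using Lemma \ref{lem4.1} and $r^2 \to 0$ in the factor $e^{r^2/4}$) and that $\ell_0 = 0$ under the contradiction hypothesis. Both follow since $r^{N-2}u \le w\,t^{-(N-2)/2} \to 0$.
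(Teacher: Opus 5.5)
Your final argument does not work, and the error is in Step 3, where you identify $\lim_{s\to-\infty}\eta z'$. The factor you call bounded is $\eta(s)=r^{-(N-2)}e^{r^2/4}$ with $r=e^s$, and it blows up as $s\to-\infty$. In fact
\[
\eta(s)z'(s)=e^{r^2/4}\bigl((N-2)u(r)+ru'(r)\bigr),
\]
so the limit is $\lim_{r\to0}((N-2)u+ru')$, not $\lim (N-2)r^{N-2}u$. Lemma \ref{lem4.1} only gives $ru'=o(r^{2-N})$, which does not force this limit to vanish.

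The integral has the same problem. You have $\int_{-\infty}^s\eta z^p\,d\sigma=\int_0^r\tau e^{\tau^2/4}u(\tau)^p\,d\tau$, but the quantity you actually bounded is $\int_0^r\tau^{N-1}u^p\,d\tau=\int_{-\infty}^s z^p\,d\sigma$. The extra weight $\eta\approx e^{(N-2)|\sigma|}$ makes your estimate irrelevant; for solutions obeying (\ref{eq1.5}) the first integral is actually infinite.

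A quick check shows the argument cannot be right: it never uses $u(r)\to\infty$. Applied verbatim to a regular solution $u_\alpha$ (for which $w\to0$ as well), it would give $z'\le0$ and "exclude" $u_\alpha$. But there $z\approx\alpha e^{(N-2)s}$ and $\eta z'\to(N-2)\alpha>0$. Moreover, under your hypothesis, $z>0$ and $z\to0$ force $z'(s_0)>0$ at some $s_0$. Since $\eta z'$ is decreasing, this gives $z'>0$ on $(-\infty,s_0]$. That is your Step 1, it contradicts Step 3, and it yields no contradiction by itself.

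The missing idea is to prove monotonicity of $w$ itself rather than of $z=r^{N-2}u$. Since $w\to0$ and $B(t)\to-(N-2)^2/2<0$, equation (\ref{eq2.7}) gives $tw''+b(t)w'>0$ for large $t$. Hence $e^{\int^t b(s)/s\,ds}w'(t)$ is increasing, so $w'$ is eventually of one sign, and $w\to0$ forces $w'<0$. In other words, $s^{N-2}(-\log s)^{(N-2)/2}u(s)$ is increasing for small $s$.

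This removes your "main obstacle". Using $s^2(-\log s)f(u(s))\le\varepsilon u(s)$ you get
\[
\int_0^r s^{N-1}e^{s^2/4}f(u)\,ds\le\varepsilon e^{r^2/4}\,r^{N-2}(-\log r)^{(N-2)/2}u(r)\int_0^r\frac{ds}{s(-\log s)^{N/2}}=\frac{2\varepsilon e^{r^2/4}}{N-2}\,r^{N-2}u(r).
\]
The integral here converges, unlike $\int_0^r ds/(s(-\log s))$. It also fixes your "$w\le\delta$" attempt, since now $\sup_{\tau\ge t}w(\tau)=w(t)$.

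Then (\ref{eq4.3}) gives $-ru'\le\sigma u$ with $\sigma=2\varepsilon/(N-2)<1$. So $u=O(r^{-\sigma})$, $u'=O(r^{1-\sigma})$, and $u$ is bounded near $0$. This contradicts singularity, and that is exactly where the hypothesis $u\to\infty$ has to enter.
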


\begin{proof}
Assume by contradiction that 
\begin{equation}
	\limsup_{r \to 0}r^{N-2}(-(\log r))^{(N-2)/2)}u(r) = 0.
	\label{eq4.11}
\end{equation}
First we will show that 
\begin{equation}
	\frac{d}{dr}(r^{N-2}(-(\log r))^{(N-2)/2)}u(r)) > 0
	\quad \mbox{for} \ 0 < r \leq r_1
	\label{eq4.12}
\end{equation}
with some $r_1 > 0$. 
Define $w(t)$ by (\ref{eq2.6}). 
From (\ref{eq4.11}) we have $w(t) \to 0$ as $t \to \infty$. 
Then there exists $t_1 > 0$ such that
$$
	-B(t) +  w(t)^{p-1} < 0 \quad \mbox{for} \ t \geq t_1,
$$
where $B(t)$ is defined by (\ref{eq2.9}). 
From (\ref{eq2.7}) we obtain $tw''(t) + b(t)w'(t) > 0$ for $t \geq t_1$, and hence 
$$
	(e^{\int^t_{t_1}\frac{b(s)}{s}ds}w'(t))' > 0 \quad \mbox{for} \ t \geq t_1. 
$$
This implies that $e^{\int^t_{t_1}\frac{b(s)}{s}ds}w'(t)$ is increasing for $t \geq t_1$. 
Then we have either $w'(t) < 0$ for all $t \geq t_1$ or $w'(t) > 0$ for $t \geq t_2$ 
with some $t_2 \geq t_1$. 
Since $w(t) > 0$ and $w(t) \to 0$ as $t \to \infty$, the former case has to hold. 
Then we obtain  $w'(t) < 0$ for all $t \geq t_1$, 
which implies that (\ref{eq4.12}) holds with $r_1 = e^{-t_1}$.

From (\ref{eq4.11}) we have 
$$
	r^2(-\log r)u(r)^{p-1} = (r^{N-2}(-(\log r))^{(N-2)/2}u(r))^{2/(N-2)} 
	\to 0 \quad \mbox{as} \ r \to 0.
$$
Then it follows from (\ref{eq4.1}) that 
$$
	\frac{r^2(-\log r)f(u(r))}{u(r)} =
	r^2(-\log r)\left(\frac{1}{p-1} + u(r)^{p-1}\right) 
	\to 0 \quad \mbox{as} \ r \to 0.
$$
Then, for $\vep > 0$ to be determined later, there exists $r_2 \in (0, r_1]$ such that
$$
	r^2(-(\log r))f(u(r)) < \vep u(r) \quad \mbox{for} \ 0 < r \leq r_2.
$$
From (\ref{eq4.12}) we have
$$
	\begin{array}{rcl}
	\displaystyle 
	\int^r_0 s^{N-1}e^{s^2/4}f(u(s))ds & \leq &
	\displaystyle 
	\vep e^{r^2/4} \int^r_0 s^{N-3}(-\log r)^{-1}u(s)ds
	\\[2ex]
	& \leq &
	\displaystyle 
	\vep e^{r^2/4} r^{N-2}(-(\log r))^{(N-2)/2}u(r)
	\int^r_0 s^{-1}(-\log r)^{-N/2}ds 
	\\[2ex]
	& \leq &
	\displaystyle 
	\frac{2e^{r^2/4}\vep}{N-2}r^{N-2}u(r)
	\end{array}
$$
for $0 < r \leq r_2$.
It follows from (\ref{eq4.3}) that  
\begin{equation}
	-r^{N-1}e^{r^2/4}u'(r) = \int^r_0 s^{N-1}e^{s^2/4}f(u(s))ds \leq 
	\frac{2e^{r^2/4}\vep}{N-2}r^{N-2}u(r)
	\label{eq4.13}
\end{equation}
for $0 < r \leq r_2$.
Let $\sigma = 2\vep/(N-2)$, and take $\vep > 0$ so small that $\sigma < 1$. 
From (\ref{eq4.13}) we have $-ru'(r) \leq \sigma u(r)$ for $0 < r \leq r_2$, 
which implies that $(r^{\sigma}u(r))' \geq 0$ for $0 < r \leq r_2$.
Then, by the same argument as in the proof of Lemma \ref{lem4.3}, we obtain 
$\lim_{r \to 0}u(r) < \infty$, 
which is a contradiction. 
Thus we obtain (\ref{eq4.10}).
\end{proof}


\section{Asymptotic behavior of the solutions near the origin}

In this section, we consider the 
asymptotic behavior of the singular solutions near the origin.
First we consider the case $N/(N-2) < p \leq (N+2)/(N-2)$.

\begin{proposition}
\label{prp5.1}
Let $N/(N-2) < p \leq (N+2)/(N-2)$, and let $u$ be a singular positive solution of $(\ref{eq1.2})$. 
Define $w(t)$ by $(\ref{eq2.1})$. 

{\rm (i)} Let $N/(N-2) < p < (N+2)/(N-2)$. Then $w(t)$ satisfies 
\begin{equation}
	\lim_{t \to \infty}w(t) = L \quad \mbox{and} \quad 
	\lim_{t \to \infty}w'(t) = 0,
	\label{eq5.1}
\end{equation}
where $L$ is the constant defined in $(\ref{eq1.3})$. 

{\rm (ii)} 
Let $p = (N+2)/(N-2)$. 
Then there exist constants $\gamma_1, \gamma_2$ 
such that 
\begin{equation}
	\gamma_1 = \liminf_{t \to \infty}w(t) \leq \limsup_{t \to \infty}w(t) = \gamma_2. 
	\label{eq5.2}
\end{equation}
The constants $\gamma_1, \gamma_2$ satisfy 
\begin{equation}
	0 < \gamma_1 \leq L \leq \gamma_2 < A_0 \quad \mbox{and} \quad 
	\Phi(\gamma_1) = \Phi(\gamma_2),
	\label{eq5.3}
\end{equation}
where $A_0$ and  $\Phi$ are defined by $(\ref{eq1.9})$ and $(\ref{eq1.8})$, respectively. 
If $\gamma_1 = \gamma_2$, then $w(t) \equiv L$.
\end{proposition}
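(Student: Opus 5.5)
The plan is to study the autonomous-like equation (\ref{eq2.2}) through its energy $E(w)$. By (\ref{eq2.5}) the energy is nonincreasing, and by Lemma \ref{lem4.2} $w, w', w''$ are bounded, so $E$ is bounded below. Hence the limit $E_\infty=\lim_{t\to\infty}E(w)(t)$ exists and
\[
\int_{t_0}^\infty (-a(t))\,w'(t)^2\,dt<\infty .
\]

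\textbf{Case (i).} For $p<(N+2)/(N-2)$ we have $-a(t)\ge c>0$, so $w'\in L^2(t_0,\infty)$. Since $w''$ is bounded, $w'$ is uniformly continuous, and therefore $w'(t)\to 0$. Then $\Phi(w(t))\to E_\infty$, so $w(t)$ accumulates only on the level set $\{\Phi=E_\infty\}$. This set is finite because $\Phi$ is not constant, and $w$ is continuous, so $w(t)\to\gamma$ for a single $\gamma$.

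Passing to the limit in (\ref{eq2.2}) along translates, using Arzel\`a--Ascoli with bounded $w''$, the limiting function is the constant $\gamma$ with $-L^{p-1}\gamma+\gamma^p=0$. (Alternatively, $w''=a w'+L^{p-1}w-w^p$ tends to the constant $L^{p-1}\gamma-\gamma^p$, which must vanish since $w'\to0$.) So $\gamma\in\{0,L\}$. By Lemma \ref{lem4.3}, $\limsup w>0$, hence $\gamma=L$, which gives (\ref{eq5.1}).

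\textbf{Case (ii).} For $p=(N+2)/(N-2)$ we have $a(t)=-\tfrac12e^{-2t}$, so the damping is integrable. The $\omega$-limit dynamics is the conservative equation $W''=L^{p-1}W-W^p$.

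Define $\gamma_1=\liminf w$ and $\gamma_2=\limsup w$. Consider the translates $w_n(t)=w(t+t_n)$. By boundedness of $w, w', w''$ (Lemma \ref{lem4.2}), a subsequence converges in $C^1_{\rm loc}$ to a solution $W$ of $W''=L^{p-1}W-W^p$ on $\R$. This $W$ has energy $\tfrac12W'^2+\Phi(W)\equiv E_\infty$ and satisfies $\gamma_1\le W\le\gamma_2$.

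If $\gamma_1<\gamma_2$, then $w$ oscillates. At times $t_n$ of local maxima with $w(t_n)\to\gamma_2$ we have $w'(t_n)=0$, so $\Phi(\gamma_2)=E_\infty$. Similarly $\Phi(\gamma_1)=E_\infty$ using local minima. This gives $\Phi(\gamma_1)=\Phi(\gamma_2)$.

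If $\gamma_1=\gamma_2=\gamma$, the same limiting argument as in (i) forces $\gamma\in\{0,L\}$.

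The inequalities (\ref{eq5.3}) are the main obstacle. The key fact to establish is $E_\infty<0$.

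\begin{itemize}
\item \emph{Step 1: $E_\infty\le 0$.} Since $\Phi\ge 0$ only for $w=0$ or $w\ge A_0$, an energy $E_\infty\ge0$ would force the limit orbit $W$ to be either the homoclinic orbit at energy $0$ or an unbounded or large orbit. An orbit with energy $>0$ passes through $W=0$, and then $w$ would become negative, a contradiction.
\item \emph{Step 2: $E_\infty\ne 0$.} The orbit at energy exactly $0$ is the homoclinic orbit, along which $w$ would approach $0$ repeatedly. I expect to exclude this with a Lemma \ref{lem4.3}-type argument, or by noting that the homoclinic orbit spends arbitrarily long stretches near $0$, where the argument of Lemma \ref{lem4.3} can be localized. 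This is the delicate point; I will try first to show directly that the limit set cannot contain $0$, by reusing the monotonicity argument of Lemma \ref{lem4.3} on long intervals.
\end{itemize}

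Granting $E_\infty<0$, we get $E_\infty\in[\Phi(L),0)$. The level set $\{\Phi=E_\infty\}$ then consists of two points $\gamma_1\le L\le\gamma_2$, all contained in $(0,A_0)$, which yields (\ref{eq5.3}). In particular $\gamma=0$ is excluded when $\gamma_1=\gamma_2$.

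\textbf{Rigidity when $\gamma_1=\gamma_2$.} In this case $E_\infty=\Phi(L)$, the global minimum of $\Phi$ by (\ref{eq1.10}). Since $E(w)$ is nonincreasing and $E(w)(t)\ge\Phi(w(t))\ge\Phi(L)$, it follows that $E(w)\equiv\Phi(L)$. Then $w'\equiv0$ and $\Phi(w)\equiv\Phi(L)$, so $w\equiv L$.
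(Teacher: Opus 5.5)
Your case (i) is correct and a bit more direct than the paper's: you get $w'\to0$ straight from $w'\in L^2$ plus uniform continuity, whereas the paper argues by contradiction through crossings $w(\tau_n)=L$. Case (ii), however, contains a genuine error and an unproved step.

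\textbf{The rigidity step is wrong.} You argue that $E(w)$ is nonincreasing, $E(w)\ge\Phi(w)\ge\Phi(L)$, and $E(w)\to\Phi(L)$, hence $E(w)\equiv\Phi(L)$. This does not follow. A nonincreasing function can decrease strictly to its lower bound; you would need $E(w)$ to be nondecreasing. The inference never uses $p=(N+2)/(N-2)$, which signals the problem. In case (i) the same three facts hold for every singular solution, so your argument would make every singular solution equal to $U_L$. That contradicts Proposition \ref{prp3.1} (take $\alpha\neq L$).

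The missing ingredient is that the damping $a(t)=-\frac12e^{-2t}$ is integrable, exploited through a Gronwall argument:
\begin{itemize}
\item Put $W(t)=\int_t^\infty a(s)w'(s)^2\,ds\le0$. Then $E(w)(t)-\Phi(L)=-W(t)$.
\item By (\ref{eq1.10}), $\frac12w'(t)^2\le E(w)(t)-\Phi(L)=-W(t)$, so $W'=-aw'^2\le 2aW$.
\item Hence $e^{-2\int_{t_0}^t a}\,W(t)$ is nonincreasing.
\item Since $\int_{t_0}^\infty a>-\infty$ and $W(\tau)\to0$, this quantity tends to $0$. Therefore $W\ge0$, so $W\equiv0$, $w'\equiv0$, and $w\equiv L$.
\end{itemize}
In case (i), $e^{-2\int a}$ grows exponentially, and this is exactly where the argument fails, as it must.

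\textbf{The bound $E_\infty<0$ is assumed, not proved.} The route you sketch (ruling out the homoclinic level by a localized Lemma \ref{lem4.3}) is never carried out, and it is not needed. In the oscillating case you already have $\Phi(\gamma_1)=\Phi(\gamma_2)$ with $\gamma_1<\gamma_2$. Since $\Phi$ is strictly decreasing on $[0,L]$ and strictly increasing on $[L,\infty)$, this forces $\gamma_1<L<\gamma_2$.

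Now take local minima $t_n$ with $w(t_n)\to\gamma_1$. For large $n$ you have $0<w(t_n)<L<A_0$, so $E(w)(t_n)=\Phi(w(t_n))<0$. Monotonicity then gives $E_\infty\le E(w)(t_n)<0$. This yields $\gamma_1>0$ and $\gamma_2<A_0$ at once, and makes your Steps 1--2 and the $\omega$-limit orbit analysis unnecessary.

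In the non-oscillating case, exclude $\gamma=0$ with Lemma \ref{lem4.3}, exactly as you did in (i).
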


To prove Proposition \ref{prp5.1}, we need a series of lemmas.

\begin{lemma}
\label{lem5.2}
Let $w$ be defined as in Proposition $\ref{prp5.1}$.  
Define $E(w)(t)$ by $(\ref{eq2.4})$. 
Then $w$ satisfies the following {\rm (i)} and {\rm (ii)}.

\begin{itemize}
\item[{\rm (i)}] $\lim_{t \to \infty}E(w)(t) = \zeta$ for some $\zeta \geq \Phi(L)$. 

\item[{\rm (ii)}] One has  
\begin{equation}
	\int^{\infty}_{t_0}a(s)w'(s)^2ds > -\infty,
	\label{eq5.4}
\end{equation}
where $a \leq 0$ is defined by $(\ref{eq2.3})$.
\end{itemize}
\end{lemma}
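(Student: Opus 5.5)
Both parts follow from the monotonicity identity (\ref{eq2.5}) together with the lower bound (\ref{eq1.10}) on $\Phi$.

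For (i), recall from (\ref{eq2.5}) that $E(w)(t)$ is nonincreasing on $[t_0,\infty)$. By definition (\ref{eq2.4}) and (\ref{eq1.10}),
$$
	E(w)(t) = \frac{1}{2}w'(t)^2 + \Phi(w(t)) \geq \Phi(w(t)) \geq \Phi(L)
	\quad \mbox{for} \ t \geq t_0,
$$
since $w(t) > 0$ because $u$ is positive. A nonincreasing function bounded below has a finite limit. Hence $\zeta := \lim_{t\to\infty}E(w)(t)$ exists, and passing to the limit in the inequality gives $\zeta \geq \Phi(L)$.

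For (ii), integrate (\ref{eq2.5}) over $[t_0,t]$:
$$
	\int^t_{t_0} a(s)w'(s)^2\,ds = E(w)(t) - E(w)(t_0) \geq \Phi(L) - E(w)(t_0).
$$
The integrand $a(s)w'(s)^2$ is nonpositive, because $a \leq 0$ when $p \leq (N+2)/(N-2)$. So the left-hand side is monotone nonincreasing in $t$ and bounded below, and its limit as $t\to\infty$ exists. That limit equals $\zeta - E(w)(t_0) > -\infty$, which is (\ref{eq5.4}).

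There is no real obstacle here; the only points to check are the sign of $a$ and that $\Phi$ is bounded below on $[0,\infty)$, both of which are recorded in Section 2. Lemma \ref{lem4.2} is not needed.
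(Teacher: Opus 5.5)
Your proof is correct and is essentially the paper's argument. It obtains the limit $\zeta \geq \Phi(L)$ from the monotonicity (\ref{eq2.5}) together with the lower bound $E(w)(t) \geq \Phi(w(t)) \geq \Phi(L)$ from (\ref{eq1.10}). It then integrates (\ref{eq2.5}) over $[t_0,t]$ and lets $t \to \infty$ to get $\int_{t_0}^{\infty} a(s)w'(s)^2\,ds = \zeta - E(w)(t_0) > -\infty$.
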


\begin{proof}
From (\ref{eq1.10}) and (\ref{eq2.4}) we have 
$$
	E(w)(t) \geq \Phi(w(t)) \geq \Phi(L) 
	\quad  \mbox{for} \ t \geq t_0.
$$ 
From (\ref{eq2.5}) we see that $E(w)(t)$ is nonincreasing for $t \geq t_0$. 
Then there exists a limit 
$\zeta = \lim_{t \to \infty}E(w)(t) \geq \Phi(L)$.
Thus (i) holds.

Integrating (\ref{eq2.5}) on $[t_0, t]$, and letting $t \to \infty$, we have 
$$
	\zeta- E(w)(t_0) =  \int^{\infty}_{t_0}a(s)w'(s)^2ds > -\infty.
$$
Thus (\ref{eq5.4}) holds.
\end{proof}

\begin{lemma}
\label{lem5.3}
Let $w$ be defined as in Proposition $\ref{prp5.1}$. 
If $w$ satisfies 
\begin{equation}
	\lim_{t \to \infty}w(t) = \gamma
	\label{eq5.5}
\end{equation}
for some $\gamma > 0$, then $(\ref{eq5.1})$ holds.
\end{lemma}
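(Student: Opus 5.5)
We need to show that if $w(t)\to\gamma>0$, then $\gamma = L$ and $w'(t)\to 0$. The argument has three steps: show $w'\to0$, pass to the limit in (\ref{eq2.2}) to pin down $\gamma$, and settle the endpoint exponent where $a\to 0$.

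\emph{Step 1: $w'\to0$.} By Lemma \ref{lem4.2}, $w''$ is bounded on $[t_0,\infty)$, so $w'$ is uniformly continuous there. Since $w(t)\to\gamma$, we have $\int_{t_0}^\infty w'(s)\,ds = \gamma - w(t_0)$, so this improper integral converges. A uniformly continuous function with convergent improper integral tends to zero (Barbalat's lemma). Concretely: if $|w'(t_n)|\ge\vep$ along $t_n\to\infty$, then $|w'|\ge\vep/2$ on $[t_n,t_n+\delta]$ with $\delta$ fixed by the bound on $w''$. Since $w'$ has constant sign on such an interval, $|w(t_n+\delta)-w(t_n)|\ge\vep\delta/2$, which contradicts the convergence of $w$. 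Hence $w'(t)\to0$.

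\emph{Step 2: identify $\gamma$ when $p<(N+2)/(N-2)$.} From (\ref{eq2.2}), $w''(t) = a(t)w'(t) + L^{p-1}w(t) - w(t)^p$, and $a$ is bounded. Therefore $w''(t)\to L^{p-1}\gamma-\gamma^p =: c$. If $c\neq0$, then $w'(t)$ would grow like $ct$ for large $t$, contradicting $w'\to0$. So $c=0$, and since $\gamma>0$ this gives $\gamma^{p-1}=L^{p-1}$, i.e.\ $\gamma=L$.

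\emph{Step 3: the critical case $p=(N+2)/(N-2)$.} Here $a(t)=-\frac12e^{-2t}\to0$, but the argument of Step 2 uses only the boundedness of $a$, so it goes through unchanged and again yields $\gamma=L$. Alternatively, $w'\to0$ also follows from Lemma \ref{lem5.2}: $E(w)\to\zeta$ and $\Phi(w)\to\Phi(\gamma)$, so $w'^2$ converges, and convergence of $w$ forces that limit to be $0$. Either route gives (\ref{eq5.1}).

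The main obstacle is Step 1, the passage from $w\to\gamma$ to $w'\to0$. It relies on the uniform bound on $w''$ from Lemma \ref{lem4.2}; once that is available, the rest is elementary.
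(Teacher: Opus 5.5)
Your proof is correct. The identification $\gamma = L$ in your Step 2 matches the paper: pass to the limit in (\ref{eq2.2}) using $w'\to 0$ and the boundedness of $a$, then rule out a nonzero limit of $w''$ because it would make $|w'|$ unbounded. Your limit $L^{p-1}\gamma-\gamma^p$ has the right sign; the paper writes its negative, which is harmless.

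The real difference is in how you obtain $w'\to 0$. The paper uses the energy. By Lemma \ref{lem5.2} (i), $E(w)(t)\to\zeta$, so $\frac12 w'(t)^2 = E(w)(t)-\Phi(w(t))\to \zeta-\Phi(\gamma)$. A positive limit would keep $|w'|$ bounded away from zero, hence $|w|\to\infty$. This is exactly the alternative you mention in Step 3.

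Your main route is instead a Barbalat-type argument. It uses only the bound on $w''$ from Lemma \ref{lem4.2} and the Cauchy property of the convergent function $w$.

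What each buys:
\begin{itemize}
\item The energy route gives the existence of $\lim w'^2$ for free and needs no second-derivative bound.
\item Your route uses the monotone energy only indirectly, through Lemma \ref{lem4.2}. It would work for any equation where $w''$ is known to be bounded.
\end{itemize}

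Your separate Step 3 is unnecessary. Since $a$ is bounded throughout $N/(N-2)<p\le (N+2)/(N-2)$, the paper treats both cases in one argument, as you yourself observe.
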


\begin{proof}
First, we will show that 
\begin{equation}
	\lim_{t \to \infty}w'(t) = 0. 
	\label{eq5.6}
\end{equation}
Define $E(w)(t)$ by (\ref{eq2.4}). 
From (\ref{eq5.5}) and Lemma \ref{lem5.2} (i), we have 
\begin{equation}
	\lim_{t \to \infty}\frac{w'(t)^2}{2} = 
	\lim_{t \to \infty}(E(w)(t) - \Phi(w(t))) = \zeta - \Phi(\gamma).
	\label{eq5.7}
\end{equation}
Then it suffices to show that $\zeta = \Phi(\gamma)$. 
Since $E(w) \geq \Phi(w)$, we have $\zeta \geq \Phi(\gamma)$. 
Assume that $\zeta > \Phi(\gamma)$. 
Then, from (\ref{eq5.7}), there exists $t_1 \geq t_0$ such that   
$$
	|w'(t)| > \frac{\sqrt{2(\zeta - \Phi(\gamma))}}{2} > 0 \quad \mbox{for} \ t \geq t_1,
$$
which implies that $\lim_{t \to \infty}|w(t)| = \infty$. 
This is a contradiction. Thus we obtain $\zeta = \Phi(\gamma)$, 
and hence (\ref{eq5.6}) holds.

Next we will show that $\gamma = L$. 
Assume to the contrary that $\gamma \neq L$ in (\ref{eq5.5}). 
Letting $t \to \infty$ in (\ref{eq2.2}), from (\ref{eq5.5}) and (\ref{eq5.6}) we obtain  
$$
	\lim_{t \to \infty}w''(t) = -L^{p-1}\gamma + \gamma^{p} \neq 0.
$$
Then we obtain 
$$
	|w''(t)| > \frac{1}{2}|-L^{p-1}\gamma + \gamma^{p}| 
	\quad \mbox{for} \ t \geq t_2 
$$
with some $t_2 \geq t_0$, 
which implies that $|w'(t)| \to \infty$ as $t \to \infty$. 
This is a contradiction. 
Thus we obtain $\gamma = L$ in (\ref{eq5.5}). 
As a consequence, (\ref{eq5.1}) holds.
\end{proof}

\begin{lemma}
\label{lem5.4}
Let $w$ be defined as in Proposition $\ref{prp5.1}$. 
Assume that the limit of $w(t)$ as $t \to \infty$ does not exist. 
Then the following {\rm (i)--(iii)} hold.
\begin{itemize}
\item[{\rm (i)}]
There exist constants $\gamma_1 < \gamma_2$ satisfying $(\ref{eq5.2})$ and 
\begin{equation}
	0 < \gamma_1 < L < \gamma_2 < A_0 \quad \mbox{and} \quad 
	\Phi(\gamma_1) = \Phi(\gamma_2) = \zeta,
	\label{eq5.8}
\end{equation}
where $\zeta$ is the constant in Lemma $\ref{lem5.2}$ {\rm (i)} 
and $\Phi$ is defined by $(\ref{eq1.8})$.

\item[{\rm (ii)}] 
One has $\Phi(L) < \zeta < 0$. 

\item[{\rm (iii)}]
There exists an infinite sequence $\tau_n \to \infty$ such that 
$w(\tau_n) = L$ for $n = 1, 2, \ldots$.
\end{itemize}
\end{lemma}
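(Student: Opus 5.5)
We work with $w$ from (\ref{eq2.1}), which by Lemma \ref{lem4.2} satisfies $w,w',w''$ bounded on $[t_0,\infty)$. Put $\gamma_1=\liminf_{t\to\infty}w(t)$ and $\gamma_2=\limsup_{t\to\infty}w(t)$. Since the limit does not exist, $\gamma_1<\gamma_2$, and both are finite. Lemma \ref{lem4.3} gives $\gamma_2>0$.

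\emph{Step 1: $\Phi(\gamma_1)=\Phi(\gamma_2)=\zeta$.} Because $w$ oscillates, there are sequences of local maxima $t_n\to\infty$ with $w(t_n)\to\gamma_2$ and $w'(t_n)=0$, and of local minima $s_n\to\infty$ with $w(s_n)\to\gamma_1$ and $w'(s_n)=0$. Evaluating $E(w)$ at these points and using Lemma \ref{lem5.2}(i) gives $\Phi(\gamma_2)=\zeta=\Phi(\gamma_1)$. If $\gamma_1=0$, this forces $\zeta=\Phi(0)=0$, so $\gamma_2\in\{0,A_0\}$ and hence $\gamma_2=A_0$. I will exclude this and the case $\gamma_2\geq A_0$ in Step 3.

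\emph{Step 2: $w'\to0$ in an integrated sense, and $w$ crosses $L$.} The key is Lemma \ref{lem5.2}(ii), namely $\int^\infty a(s)w'(s)^2ds>-\infty$. For $p<(N+2)/(N-2)$, $a(t)\to N-2-4/(p-1)<0$, so $\int^\infty w'^2<\infty$. Since $(w'^2)'=2w'w''$ is bounded, this forces $w'\to0$. Then $E(w)-\Phi(w)\to0$, so $\Phi(w(t))\to\zeta$. Hence $w(t)$ tends to the finite set $\Phi^{-1}(\zeta)$, and by continuity $w$ converges, a contradiction. So in case (i) of Proposition \ref{prp5.1} this lemma is vacuous, and the content is the critical case $p=(N+2)/(N-2)$, where $a(t)=-\frac12e^{-2t}$ and Lemma \ref{lem5.2}(ii) gives nothing.

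For that case I use the structure of $\Phi$. It is strictly decreasing on $[0,L]$, strictly increasing on $[L,\infty)$, and $\Phi(L)$ is its minimum. Distinct $\gamma_1<\gamma_2$ with equal $\Phi$-values therefore must straddle $L$: $\gamma_1<L<\gamma_2$. Moreover $\zeta=\Phi(\gamma_1)>\Phi(L)$, which is the first half of (ii). Since $w$ oscillates between neighbourhoods of $\gamma_1<L$ and $\gamma_2>L$ infinitely often, the intermediate value theorem yields $\tau_n\to\infty$ with $w(\tau_n)=L$, which is (iii).

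\emph{Step 3: $\zeta<0$, hence $\gamma_1>0$ and $\gamma_2<A_0$.} This is the main obstacle. Since $\Phi\ge0$ only on $\{0\}\cup[A_0,\infty)$, the condition $\zeta<0$ is equivalent to $\gamma_1>0$ and, simultaneously, $\gamma_2<A_0$.

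Suppose $\zeta\geq0$. Because the perturbation $a(t)=-\frac12e^{-2t}$ is integrable, $E(w)(t)-\zeta=\int_t^\infty(-a)w'^2=O(e^{-2t})$, so $w$ is asymptotically a trajectory of the autonomous system $w''=L^{p-1}w-w^p$ at energy level $\zeta$. For $\zeta>0$, the level set $\{E=\zeta\}$ consists of unbounded orbits. For $\zeta=0$, it is the homoclinic loop through $0$, along which the time spent near $0$ is unbounded.

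In the case $\zeta=0$, $w$ would return arbitrarily close to $0$ with $w'$ small and spend long intervals there. On such an interval $w''\approx L^{p-1}w>0$, and I would try to mimic the argument of Lemma \ref{lem4.3}: a long stay near $0$ gives $r^{2/(p-1)}u$ small on a long range of $r$. To derive the contradiction I would compare with the integral identity (\ref{eq4.3}), or use the monotonicity $u'\le0$ from Lemma \ref{lem4.1}. Concretely, $w'=-\frac{2}{p-1}w-r^{(p+1)/(p-1)}u'\ge-\frac{2}{p-1}w$. I expect that an exit from the neighbourhood of $0$ along the homoclinic, where $w'\approx+ L^{(p-1)/2}w$, conflicts with $-\rho u'=\int_0^r\rho f(u)$ being controlled by the earlier large values of $u$. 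This also gives $\gamma_1>0$.

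The case $\zeta>0$ is excluded by boundedness of $w$, via the approximation to the autonomous flow: on the unbounded orbits the solution would eventually grow without bound.

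Once $\zeta<0$, we get $0<\gamma_1<L<\gamma_2<A_0$, which finishes (i) and (ii).
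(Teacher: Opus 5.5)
Your Steps 1 and 2 are fine and match the paper: critical points approaching $\gamma_1,\gamma_2$, the limit of $E(w)$ from Lemma \ref{lem5.2}(i), the shape of $\Phi$ giving $\gamma_1<L<\gamma_2$ and $\zeta>\Phi(L)$, and the intermediate value theorem for (iii). The subcritical digression is correct but not needed.

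There is a genuine gap in Step 3. The claims $\zeta<0$, $\gamma_1>0$ and $\gamma_2<A_0$ are never proved. For $\zeta=0$ you only sketch a comparison with the autonomous homoclinic orbit. You say you ``would try to mimic'' Lemma \ref{lem4.3} and ``expect'' a conflict with (\ref{eq4.3}), but that is a heuristic, not an argument. The case $\zeta>0$ needs no asymptotic-dynamics argument at all: once you know $0\le\gamma_1<L$, you get $\zeta=\Phi(\gamma_1)\le 0$ automatically.

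The missing idea is a one-line use of the monotonicity of $E(w)$ from (\ref{eq2.5}). Since $E(w)$ is nonincreasing with limit $\zeta$, you have $\zeta\le E(w)(t)$ for every $t\ge t_0$. Take one of your local minima $s_n$, with $w'(s_n)=0$ and $w(s_n)$ close to $\gamma_1<L$. Because $w>0$, this gives $0<w(s_n)<L<A_0$, and hence
\[
\zeta\le E(w)(s_n)=\Phi(w(s_n))<0 .
\]
Then $\Phi(\gamma_1)=\Phi(\gamma_2)=\zeta<0$ forces $\gamma_1\neq 0$, because $\Phi(0)=0$. It also forces $\gamma_2<A_0$, because $\Phi\ge 0$ on $[A_0,\infty)$. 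This gives (\ref{eq5.8}) and (ii).

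In particular, the homoclinic scenario $\gamma_1=0$, $\gamma_2=A_0$ is excluded for a simple reason. The energy would have to decrease to $\zeta=0$, yet it is strictly negative at every critical point of $w$ lying in $(0,A_0)$. This is exactly how the paper closes the argument.
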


\begin{proof}
Since $w(t)$ is positive and bounded for $t \geq t_0$ by Lemma \ref{lem4.2}, 
there exist $0 \leq \gamma_1 < \gamma_2$ such that  
$$
	\liminf_{t \to \infty}w(t) = \gamma_1 
	\quad \mbox{and} \quad 
	\limsup_{t \to \infty}w(t) = \gamma_2.
$$
Then there exist sequences $t_n \to \infty$ and $s_n \to \infty$ such that 
\begin{equation}
	w'(t_n) = w'(s_n) = 0, \quad \lim_{n \to \infty}w(t_n) = \gamma_1, 
	\quad \mbox{and} \quad 
	\lim_{n \to \infty}w(s_n) = \gamma_2.
	\label{eq5.9}
\end{equation}
Define $E(w)(t)$ by (\ref{eq2.4}). Then we obtain 
$$
	\lim_{n \to \infty}E(w)(t_n) = \lim_{n \to \infty}\Phi(w(t_n)) = \Phi(\gamma_1) 
	\quad \mbox{and} \quad  
	\lim_{n \to \infty}E(w)(s_n) = \lim_{n \to \infty}\Phi(w(s_n)) = \Phi(\gamma_2). 
$$
By Lemma \ref{lem5.2} (i), we have 
\begin{equation}
	\Phi(\gamma_1) = \Phi(\gamma_2) = \zeta = \lim_{t \to \infty}E(w)(t).
	\label{eq5.10}
\end{equation}
Since $0 \leq \gamma_1 < \gamma_2$ and $\Phi(v)$ is decreasing for $0 < v < L$ and 
increasing for $v > L$, we obtain  
\begin{equation}
	\gamma_1 < L < \gamma_2 \quad \mbox{and} \quad 
	\Phi(L) < \zeta = \Phi(\gamma_1) = \Phi(\gamma_2).
	\label{eq5.11}
\end{equation}
Then, for sufficiently large $n \in \N$, we have $0 < w(t_n) < L$, and then 
$E(w)(t_n) = \Phi(w(t_n)) < 0$. 
Since $E(w)(t)$ is nonincreasing, we obtain $\zeta < 0$ in (\ref{eq5.10}), 
which implies that $0 < \gamma_1 < \gamma_2 < A_0$. Thus (\ref{eq5.8}) holds.
From (\ref{eq5.9}) and (\ref{eq5.11}), there exists an infinite sequence 
$\tau_n \to \infty$ such that $w(\tau_n) = L$ for $n = 1, 2, \ldots$.
Thus (i)--(iii) hold.
\end{proof}

We are now in a position to prove Proposition \ref{prp5.1}.

\begin{proof}[Proof of Proposition \ref{prp5.1}]
(i) Let $N/(N-2) < p < (N+2)/(N-2)$. 
First we will show that (\ref{eq5.5}) holds with  some $\gamma > 0$. 
Assume by contradiction that the limit of $w(t)$ as $t \to \infty$ does not exist. 
Define $E(w)(t)$ by (\ref{eq2.4}). 
By Lemma \ref{lem5.2} (i) we have $E(w)(t) \to \zeta$ with some $\zeta \geq \Phi(L)$.
Lemma \ref{lem5.4} implies that $\zeta > \Phi(L)$ and 
there exists a sequence $\tau_n \to \infty$ such that 
$w(\tau_n) = L$ for $n = 1, 2, \ldots$.
Then we obtain 
$$
	\lim_{n \to \infty}E(w)(\tau_n) = 
	\lim_{n \to \infty}\left(\frac{w'(\tau_n)^2}{2} + \Phi(L)\right) = \zeta.
$$
It follows that 
$$
	\lim_{n \to \infty}\frac{w'(\tau_n)^2}{2} = \zeta - \Phi(L) > 0.
$$
Hence, there exists an integer $n_0$ such that 
$$
	w'(\tau_n)^2 \geq \zeta-\Phi(L) \quad \mbox{for} \ n \geq n_0.
$$
Since $w''(t)$ is bounded for $t \geq t_0$ by Lemma \ref{lem4.2}, 
there exists $\rho > 0$ such that 
$$
	w'(t)^2 \geq \frac{\zeta-\Phi(L)}{2} 
	\quad \mbox{for} \ \tau_n-\rho \leq t \leq \tau_n+ \rho
	\quad \mbox{with} \ n \geq n_0.
$$
This implies that 
$$
	\int^{\infty}_{t_0}w'(t)^2dt = \infty.
$$
Since $p < (N+2)/(N-2)$, we have $a(t) \to N-2-\frac{4}{p-1} < 0$ as 
$t \to \infty$. Thus we obtain 
$$
	\int^{\infty}_{t_0}a(t)w'(t)^2dt = -\infty,
$$
which contradicts Lemma \ref{lem5.2} (ii). 
Thus (\ref{eq5.5}) holds with some $\gamma \geq 0$. 
Lemma \ref{lem4.3} implies that $\gamma > 0$. 
By Lemma \ref{lem5.3}, we obtain (\ref{eq5.1}).

(ii) Let $p = (N+2)/(N-2)$. 
Assume first that the limit of $w(t)$ as $t \to \infty$ does not exist. 
Then, by Lemma \ref{lem5.4}, there exist constants 
$\gamma_1 < \gamma_2$ satisfying (\ref{eq5.2}) and (\ref{eq5.3}).
Assume now that the limit of $w(t)$ as $t \to \infty$ exists, that is, 
$\gamma_1 = \gamma_2 = \gamma$ in (\ref{eq5.2}).
Then, by Lemma \ref{lem4.3}, we have $\gamma > 0$. 
By Lemma \ref{lem5.3}, we have (\ref{eq5.1}), and hence $\gamma = L$. 
We will show that $w(t) \equiv L$. 
Integrating (\ref{eq2.5}) on $[t, \tau]$ with $t \geq t_0$, we have 
\begin{equation}
	E(w)(\tau)- E(w)(t) = \int^{\tau}_{t}a(s)w'(s)^2ds.
	\label{eq5.12}
\end{equation}
From (\ref{eq5.1}) we have
$$
	\lim_{\tau \to \infty}E(w)(\tau) = \lim_{\tau \to \infty}
	\left(\frac{1}{2}w'(\tau)^2 + \Phi(w(\tau))\right) = \Phi(L).
$$ 
Then, letting $\tau \to \infty$ in (\ref{eq5.12}), we obtain 
\begin{equation}
	\Phi(L) - E(w(t)) = \int^{\infty}_t a(s)w'(s)^2 ds
	\quad \mbox{for} \ t \geq t_0.
	\label{eq5.13}
\end{equation}
By the definition of $E(w)(t)$ and the property (\ref{eq1.10}), we have 
\begin{equation}
	E(w(t))-\Phi(L) = \frac{1}{2}w'(t)^2 + \Phi(w(t)) -\Phi(L) \geq \frac{1}{2}w'(t)^2. 
	\label{eq5.14}
\end{equation}
Then, from (\ref{eq5.13}) and (\ref{eq5.14}) we obtain  
\begin{equation}
	\frac{1}{2}w'(t)^2 \leq -\int^{\infty}_t a(s)w'(s)^2 ds \quad \mbox{for} \ t \geq t_0.
	\label{eq5.15}
\end{equation}
Note here that $a(t) = -e^{-2t}/2 < 0$ when $p = (N+2)/(N-2)$. 
Define 
$$
	W(t) = \int^{\infty}_t a(s)w'(s)^2 ds \leq 0 
	\quad \mbox{for} \ t \geq t_0.
$$
Then, from (\ref{eq5.15}) we have 
$$
	W'(t) = -a(t)w'(t)^2 \leq  2a(t)W(t) \quad \mbox{for} \ t \geq t_0,
$$ 
which implies that 
$$
	\frac{d}{dt}\left(e^{-2\int^t_{t_0}a(s)ds} W(t)\right) \leq 0 \quad \mbox{for} \ t \geq t_0.
$$
By Lemma \ref{lem5.2} (ii) we have $W(t) \to 0$ as $t \to \infty$.  
Then, integrating the above on $[t, \tau]$, and letting $\tau \to \infty$, 
we obtain $W(t) \geq 0$ for $t \geq t_0$. 
This implies that $W(t) \equiv 0$, and hence $w'(t) \equiv 0$. 
From (\ref{eq5.1}) we have $w(t) \equiv L$. 
\end{proof}

In the case $p = N/(N-2)$, we obtain the following.

\begin{proposition}
\label{prp5.5}
Let $p = N/(N-2)$, and let $u$ be a singular positive solution of $(1.2)$. 
Define $w(t)$ by $(2.6)$ for $t \geq t_0$.
Then 
\begin{equation}
	\lim_{t \to \infty}w(t) = 
	\left(\frac{(N-2)^2}{2}\right)^{(N-2)/2}.
	\label{eq5.16}
\end{equation}
\end{proposition}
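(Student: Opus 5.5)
We follow the pattern of the proof of Proposition \ref{prp5.1}\,(i), using the energy ${\cal E}(w)$ from (\ref{eq2.10}) in place of $E(w)$. The target value is $K=((N-2)^2/2)^{(N-2)/2}$, which is the unique positive zero of $\Psi'(v)=-\frac{(N-2)^2}{2}v+v^p$ (recall $p-1=2/(N-2)$). It is also the unique minimizer of $\Psi$ on $[0,\infty)$ by (\ref{eq2.13}).

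\emph{Step 1: limit of the energy and an integral bound.} By (\ref{eq2.15}), ${\cal E}(w)(t)\ge\Psi(w(t))\ge\Psi(K)$. Since ${\cal E}(w)$ is decreasing for $t\ge 2$, the limit $\zeta=\lim_{t\to\infty}{\cal E}(w)(t)\ge \Psi(K)$ exists. Integrating (\ref{eq2.11}) over $[t_0,\infty)$ then gives
\[
\int_{t_0}^\infty \left(b(t)-\tfrac12\right)w'(t)^2\,dt<\infty .
\]
Here the term $\int (\frac{N(N-2)}{8t^2}+\dots)w^2$ is finite by the boundedness of $w$ in Lemma \ref{lem4.4}. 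Since $b(t)-\frac12\ge c\,t$ for large $t$, this yields $\int^\infty t\,w'(t)^2dt<\infty$.

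\emph{Step 2: $w$ has a limit.} Suppose not, and let $\gamma_1<\gamma_2$ be the liminf and limsup. At local extrema $t_n,s_n$ we have $w'=0$, and the extra terms in (\ref{eq2.15}) are $O(1/t)$, so ${\cal E}(w)(t_n)\to\Psi(\gamma_1)$ and ${\cal E}(w)(s_n)\to\Psi(\gamma_2)$. Hence $\Psi(\gamma_1)=\Psi(\gamma_2)=\zeta$, and monotonicity of $\Psi$ on $(0,K)$ and $(K,\infty)$ forces $\gamma_1<K<\gamma_2$ and $\zeta>\Psi(K)$. There is then a sequence $\tau_n\to\infty$ with $w(\tau_n)=K$, along which $\frac12\tau_n w'(\tau_n)^2\to\zeta-\Psi(K)>0$.

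This is the main obstacle. Unlike Lemma \ref{lem4.2}, we only know that $tw'^2$ and $tw'w''$ are bounded, so $w'(\tau_n)\sim c\tau_n^{-1/2}$ is small, and the time-scale must be handled carefully. I would work with $v=\sqrt t\,w'$, which is bounded. From (\ref{eq2.7}), $w''=O(1)$: indeed $tw''=-b w'-Bw-w^p$ with $bw'=O(\sqrt t)$, so $w''=O(t^{-1/2})$ and $v'=O(1)$. Thus $v^2\ge c>0$ on intervals $[\tau_n-\rho,\tau_n+\rho]$ of fixed length, giving $t\,w'^2\ge c$ there. Since the $\tau_n$ are infinitely many, this contradicts $\int^\infty t w'^2<\infty$ from Step 1.

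\emph{Step 3: identify the limit.} Let $\gamma=\lim w\ge 0$. By Lemma \ref{lem4.5}, $\gamma>0$. Suppose $\gamma\ne K$. Dividing (\ref{eq2.7}) by $t$ gives
\[
w''+\frac{b(t)}{t}w'+\frac{B(t)}{t}w+\frac{w^p}{t}=0,
\]
which is not the right normalization, so use the equation directly: $b(t)w'+tw''=-(B(t)w+w^p)\to \frac{(N-2)^2}{2}\gamma-\gamma^p\ne0$. Writing $tw''+b w'=t\,e^{-\int b/s}(e^{\int b/s}w')'$ with $e^{\int b/s}\approx e^{(N-2)t}t^{-(N-2)}$, the right side has a fixed sign $\ne0$ for large $t$. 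Integrating then gives $w'(t)\to \frac{1}{N-2}\left(\frac{(N-2)^2}{2}\gamma-\gamma^p\right)\ne0$ asymptotically, since $b\sim(N-2)t$ and so $w'\approx c/( N-2)$ roughly. This makes $|w|\to\infty$, contradicting boundedness; a simpler route is that $\int t w'^2<\infty$ already forbids $w'\to c\ne0$. Hence $\gamma=K$, which is (\ref{eq5.16}).
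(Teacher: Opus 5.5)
Your Steps 1 and 2 follow the paper's argument: Lemma 5.6, Lemma 5.7, and the first half of the proof of Proposition 5.5. Your bound $v'=O(1)$ for $v=\sqrt t\,w'$ does the same job as the paper's observation that $(tw'^2)'=w'^2+2tw'w''$ is bounded by Lemma 4.4.

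\textbf{The problem is Step 3.} The claim $w'(t)\to \frac{1}{N-2}\bigl(\frac{(N-2)^2}{2}\gamma-\gamma^p\bigr)\neq 0$ is false, because a factor of $t$ has been dropped. Set $c=\frac{(N-2)^2}{2}\gamma-\gamma^p$ and $\Lambda(t)=e^{\int^t b(s)/s\,ds}=Ce^{(N-2)t}t^{-(N-2)}(1+o(1))$. Your own identity then gives
\[
\Lambda(t)w'(t)=\Lambda(t_1)w'(t_1)+\int_{t_1}^t \Lambda(s)\,(c+o(1))\,s^{-1}\,ds .
\]
Since $\int_{t_1}^t\Lambda(s)s^{-1}ds\sim\Lambda(t)/((N-2)t)$, this yields $tw'(t)\to c/(N-2)$, so $w'(t)\sim c/((N-2)t)\to0$. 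Your heuristic says the same: $bw'\approx c$ with $b\sim(N-2)t$ gives $w'\approx c/((N-2)t)$. As written, therefore, neither of your two contradictions is justified: "$|w|\to\infty$ because $w'\to c/(N-2)$" and "$\int tw'^2<\infty$ forbids $w'\to c\neq0$" both rest on the wrong rate.

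\textbf{The repair is immediate}, because $1/t$ is not integrable. From the corrected rate, $w(t)-w(t_1)\sim\frac{c}{N-2}\log t$, which contradicts $w\to\gamma$. Likewise $tw'(t)^2\sim c^2/((N-2)^2t)$, which contradicts $\int^\infty tw'^2\,dt<\infty$ from Step 1. For the one-sided version, take $t_1$ large enough that $-(Bw+w^p)$ keeps the sign of $c$ with modulus at least $|c|/2$ on $[t_1,\infty)$.

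\textbf{Comparison with the paper.} The paper first proves $tw'(t)^2\to0$: since ${\cal E}(w)\to\zeta$ and $w\to\gamma$, it must be that $\zeta=\Psi(\gamma)$. Hence $w'\to0$, so $bw'-(N-2)tw'\to0$ and $t\bigl(w''+(N-2)w'\bigr)$ tends to a nonzero constant. Integrating $|w''+(N-2)w'|\ge C/t$ (fixed sign) then gives $|w'+(N-2)w|\to\infty$, a contradiction. Once the rate is corrected, your integrating-factor version is a valid alternative that skips the intermediate step $tw'^2\to0$.
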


To prove Proposition \ref{prp5.5}, we need a series of lemmas.

\begin{lemma}
\label{lem5.6}
Let $w$ be defined as in Proposition $\ref{prp5.5}$, 
and define ${\cal E}(w)(t)$ by $(\ref{eq2.10})$. 
Then $w$ satisfies the following {\rm (i)} and {\rm (ii)}.

\begin{itemize}
\item[{\rm (i)}] $\lim_{t \to \infty}{\cal E}(w)(t) = \zeta$ for some $\zeta \geq \Psi(K)$, 
where $K$ is defined in $(\ref{eq2.13})$. 

\item[{\rm (ii)}] One has  
$$
	\int^{\infty}_{t_0}sw'(s)^2ds < \infty.
$$
\end{itemize}
\end{lemma}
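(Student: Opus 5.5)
The plan mirrors the proof of Lemma \ref{lem5.2}, with ${\cal E}(w)$ in place of $E(w)$ and $\Psi$ in place of $\Phi$. For (i), I would use the representation (\ref{eq2.15}) together with (\ref{eq2.13}). Since $t>0$ and the coefficient $\frac{N(N-2)}{8t}+\frac{N-2}{8}e^{-2t}$ is nonnegative, these give
$$
	{\cal E}(w)(t) \geq \Psi(w(t)) \geq \Psi(K) \quad \mbox{for} \ t \geq t_0 .
$$
By (\ref{eq2.11}), ${\cal E}(w)(t)$ is decreasing for $t \geq \max\{2,t_0\}$; we may assume $t_0 \geq 2$ as in Lemma \ref{lem4.4}. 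A monotone function bounded below has a limit, so $\zeta=\lim_{t\to\infty}{\cal E}(w)(t)$ exists and $\zeta \geq \Psi(K)$.

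For (ii), I would integrate (\ref{eq2.11}) on $[t_0,t]$ and let $t \to \infty$:
$$
	\zeta - {\cal E}(w)(t_0) = \int^{\infty}_{t_0}\left(\frac12 - b(s)\right)w'(s)^2 ds
	- \int^{\infty}_{t_0}\left(\frac{N(N-2)}{8s^2} + \frac{N-2}{4}e^{-2s}\right)w(s)^2 ds .
$$
By Lemma \ref{lem4.4}, $w$ is bounded on $[t_0,\infty)$. Since $\int^\infty_{t_0}(s^{-2}+e^{-2s})ds<\infty$, the second integral is finite. Both integrands are of one sign, since $\frac12-b(s)<0$ for $s\geq 2$. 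Hence the first integral is finite as well, i.e.
$$
	\int^{\infty}_{t_0}\left(b(s)-\frac12\right)w'(s)^2ds < \infty .
$$

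It then remains to compare $b(s)-\frac12$ with $s$. From (\ref{eq2.8}) and $se^{-2s}\leq 1/(2e)$,
$$
	b(s)-\frac12 \geq (N-2)(s-1) - \frac34 \geq \frac{N-2}{2}s
$$
for $s \geq s_1$, where $s_1$ is a sufficiently large constant (e.g.\ $s_1 = 2+\frac{3}{2(N-2)}$ works). This bounds $\int_{s_1}^\infty s w'(s)^2ds$. On the compact interval $[t_0,s_1]$ the integral is finite by continuity, which gives (ii).

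There is no real obstacle here. The only care needed is to justify passing to the limit in the integrals, which is legitimate because each integrand is of one sign and each integral is bounded.
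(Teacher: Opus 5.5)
Your proposal is correct and follows the paper's proof essentially step by step. For (i) you use the lower bound ${\cal E}(w)(t)\geq\Psi(w(t))\geq\Psi(K)$ from (\ref{eq2.15}) and (\ref{eq2.13}) together with monotonicity, and for (ii) you integrate (\ref{eq2.11}) and use the boundedness of $w$ from Lemma \ref{lem4.4}, as the paper does; the only addition is that you spell out the comparison $b(s)-\frac12\geq\frac{N-2}{2}s$ for large $s$ and the finiteness of the $w^2$-integral, which the paper leaves implicit.
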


\begin{proof}
From (\ref{eq2.13}) and (\ref{eq2.15}) we have 
$$
	{\cal E}(w)(t) \geq \Psi(w(t)) \geq \Psi(K) 
	\quad  \mbox{for} \ t \geq t_0.
$$ 
Recall that 
${\cal E}(w)(t)$ is nonincreasing for $t \geq \max\{2, t_0\}$. 
Then there exists a limit 
$\zeta = \lim_{t \to \infty}{\cal E}(w)(t) \geq \Psi(K)$.
Thus (i) holds.

Integrating (\ref{eq2.11}) on $[t_0, t]$, we have 
$$
	{\cal E}(w)(t) -{\cal E}(w)(t_0) 
	= \int^t_{t_0}\left(\frac{1}{2}-b(s)\right)w'(s)^2dt - 
	\int^t_{t_0}\left(\frac{N(N-2)}{8s^2} + \frac{N-2}{4}e^{-2s}\right)w(s)^2ds.
$$
Letting $t \to \infty$, we have 
$$
	\int^{\infty}_{t_0}\left(\frac{1}{2}-b(t)\right)w'(t)^2dt 
	= \zeta - {\cal E}(w)(t_0)+
	\int^{\infty}_{t_0}\left(\frac{N(N-2)}{8t^2} + \frac{N-2}{4}e^{-2t}\right)w(s)^2dt,
$$
which implies that $\int^{\infty}_{t_0}tw'(t)^2 dt < \infty$.
Thus (ii) holds.
\end{proof}

\begin{lemma}
\label{lem5.7}
Let $w$ be defined as in Proposition $\ref{prp5.5}$. 
Assume that the limit of $w(t)$ as $t \to \infty$ does not exist. 
Then the following {\rm (i)} and {\rm (ii)} hold.
\begin{itemize}
\item[{\rm (i)}] One has $\zeta > \Psi(K)$, where $\zeta$ 
is the constant in Lemma \ref{lem5.6} {\rm (i)}.

\item[{\rm (ii)}] There exists an infinite sequence $\tau_n \to \infty$ such that 
$w(\tau_n) = K$ for $n = 1, 2, \ldots$.
\end{itemize}
\end{lemma}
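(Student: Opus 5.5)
I will argue exactly as in Lemma \ref{lem5.4}, with ${\cal E}(w)$ and $\Psi$ in place of $E(w)$ and $\Phi$. By Lemma \ref{lem4.4}, $w$ is positive and bounded on $[t_0,\infty)$. Since the limit of $w$ does not exist, there are constants $0\le\gamma_1<\gamma_2$ with
\[
\liminf_{t\to\infty}w=\gamma_1, \qquad \limsup_{t\to\infty}w=\gamma_2 .
\]
We can also pick sequences $t_n,s_n\to\infty$ of local minima and maxima such that $w'(t_n)=w'(s_n)=0$, $w(t_n)\to\gamma_1$ and $w(s_n)\to\gamma_2$.

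First I compute the limit of ${\cal E}(w)$ along these sequences. By (\ref{eq2.15}), at these points
\[
{\cal E}(w)(t_n)=\Psi(w(t_n))+\Bigl(\frac{N(N-2)}{8t_n}+\frac{N-2}{8}e^{-2t_n}\Bigr)w(t_n)^2 ,
\]
and the bracketed coefficient tends to $0$ because $w$ is bounded. Hence ${\cal E}(w)(t_n)\to\Psi(\gamma_1)$, and likewise ${\cal E}(w)(s_n)\to\Psi(\gamma_2)$. Lemma \ref{lem5.6} (i) then gives
\[
\Psi(\gamma_1)=\Psi(\gamma_2)=\zeta .
\]

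Next I locate $K$ relative to $\gamma_1,\gamma_2$. The function $\Psi$ is strictly decreasing on $(0,K)$ and strictly increasing on $(K,\infty)$, and $\Psi(0)=0$. So two distinct points $\gamma_1<\gamma_2$ with equal $\Psi$-values must satisfy $\gamma_1<K<\gamma_2$. Strict monotonicity on each side then forces $\zeta=\Psi(\gamma_1)>\Psi(K)$, which is (i).

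For (ii), note that $w$ takes values close to $\gamma_1<K$ at $t_n$ and close to $\gamma_2>K$ at $s_n$, for arbitrarily large $n$. Since the sequences $t_n$ and $s_n$ interlace to infinity, the intermediate value theorem gives infinitely many $\tau_n\to\infty$ with $w(\tau_n)=K$.

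The only step needing care is the vanishing of the extra term in (\ref{eq2.15}) along the sequences. This follows from the boundedness of $w$ in Lemma \ref{lem4.4}. Positivity of $\gamma_1$ is not needed for these two claims, so I do not invoke Lemma \ref{lem4.5} here.
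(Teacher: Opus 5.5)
Your proof is correct and follows the paper's argument step for step. You take critical points $t_n, s_n$ with $w(t_n)\to\gamma_1$ and $w(s_n)\to\gamma_2$, use Lemma \ref{lem5.6}~(i) to get $\Psi(\gamma_1)=\Psi(\gamma_2)=\zeta$, let the shape of $\Psi$ force $\gamma_1<K<\gamma_2$ and $\zeta>\Psi(K)$, and obtain the points $\tau_n$ from the intermediate value theorem. You also justify that the lower-order term in (\ref{eq2.15}) vanishes along these sequences, which the paper uses without comment.
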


\begin{proof}
Since $w(t)$ is positive and bounded for $t \geq t_0$ by Lemma \ref{lem4.4}, 
there exist $0 \leq \gamma_1 < \gamma_2$ such that  
$$
	\liminf_{t \to \infty}w(t) = \gamma_1 
	\quad \mbox{and} \quad 
	\limsup_{t \to \infty}w(t) = \gamma_2.
$$
Then there exist sequences $t_n \to \infty$ and $s_n \to \infty$ such that 
\begin{equation}
	w'(t_n) = w'(s_n) = 0, \quad \lim_{n \to \infty}w(t_n) = \gamma_1, 
	\quad \mbox{and} \quad 
	\lim_{n \to \infty}w(s_n) = \gamma_2.
	\label{eq5.17}
\end{equation}
Define ${\cal E}(w)(t)$ by (\ref{eq2.10}). 
Then we obtain 
$$
	\lim_{n \to \infty}{\cal E}(w)(t_n) = \lim_{n \to \infty}\Psi(w(t_n)) 
	= \Psi(\gamma_1) 
	\quad \mbox{and} \quad  
	\lim_{n \to \infty}{\cal E}(w)(s_n) = \lim_{n \to \infty}\Psi(w(s_n)) 
	= \Psi(\gamma_2). 
$$
By Lemma \ref{lem5.6} (i), we have 
$\Phi(\gamma_1) = \Phi(\gamma_2) = \zeta = \lim_{t \to \infty}{\cal E}(w)(t)$.
Since $\Psi$ is defined by (\ref{eq2.12}) and $0 \leq \gamma_1 < \gamma_2$, we conclude that 
\begin{equation}
	\gamma_1 < K < \gamma_2 \quad \mbox{and} \quad 
	\Psi(K) < \zeta = \Psi(\gamma_1) = \Psi(\gamma_2).
	\label{eq5.18}
\end{equation}
From (\ref{eq5.17}) and (\ref{eq5.18}), there exists 
an infinite sequence $\tau_n \to \infty$ such that $w(\tau_n) = K$ for $n = 1, 2, \ldots$.
Thus (i) and (ii) hold.
\end{proof}

We are now in a position to prove Proposition \ref{prp5.5}.

\begin{proof}[Proof of Proposition \ref{prp5.5}] 
It suffices to show that $w(t) \to K$ as $t \to \infty$. 
First we show that 
\begin{equation}
	w(t) \to \gamma \quad \mbox{as} \ t \to \infty
	\label{eq5.19}
\end{equation}
with some $\gamma > 0$. 
Assume by contradiction that 
the limit of $w(t)$ as $t \to \infty$ does not exist.
Lemma \ref{lem5.7} implies that
$\lim_{t \to \infty}{\cal E}(w)(t) = \zeta > \Psi(K)$ and 
there exists a sequence $\tau_n \to \infty$ such that 
$w(\tau_n) = K$ for $n = 1, 2, \ldots$.
Then, from (\ref{eq2.15}) we obtain  
$$
	\lim_{n \to \infty}{\cal E}(w)(\tau_n) = 
	\lim_{n \to \infty}\left(\frac{\tau_nw'(\tau_n)^2}{2} + \Psi(K) + 
	\left(\frac{N(N-2)}{8\tau_n} + \frac{N-2}{8}e^{-2\tau_n}\right)K^2 \right) = \zeta.
$$
Then it follows that 
$$
	\lim_{n \to \infty}\frac{\tau_nw'(\tau_n)^2}{2} = \zeta - \Psi(K) > 0.
$$
Hence, there exists an integer $n_0$ such that 
$$
	\tau_n w'(\tau_n)^2 \geq \zeta-\Psi(K) \quad \mbox{for} \ n \geq n_0.
$$
Note that  
$$
	\frac{d}{dt}(tw'(t)^2) = w'(t)^2 + 2tw'(t)w''(t).
$$
Lemma \ref{lem4.4} implies that $(d/dt)(tw'(t)^2)$ is bounded for $t \geq t_0$. 
Then there exists $\rho > 0$ such that 
$$
	t w'(t)^2 \geq \frac{\zeta-\Phi(K)}{2} 
	\quad \mbox{for} \ \tau_n-\rho \leq t \leq \tau_n+ \rho
	\quad \mbox{with} \ n \geq n_0.
$$
This implies that 
$$
	\int^{\infty}_{t_0}tw'(t)^2dt = \infty,
$$
which contradicts Lemma \ref{lem5.6} (ii). 
Thus (\ref{eq5.19}) holds with some $\gamma \geq 0$. 
By Lemma \ref{lem4.5} we have $\gamma > 0$. 

We next we show that 
\begin{equation}
	\lim_{t \to \infty}tw'(t)^2 = 0.
	\label{eq5.20}
\end{equation}
Letting$t \to \infty$in (\ref{eq2.15}), 
from (\ref{eq5.19}) and Lemma \ref{lem5.6} (i) we have 
\begin{equation}
	\begin{array}{rcl}
	\dsp
	\lim_{t \to \infty}\frac{tw'(t)^2}{2} 
	& = & \dsp
	\lim_{t \to \infty}\left({\cal E}(w)(t) - \Psi(w(t)) -   
	\left(\frac{N(N-2)}{8t} + \frac{N-2}{8}e^{-2t}\right)w(t)^2
	\right) 
	\\[2ex]
	& = & \zeta - \Psi(\gamma) \geq 0.
	\end{array}
	\label{eq5.21}
\end{equation}
Then it suffices to show that $\zeta = \Phi(\gamma)$. 
Assume that $\zeta > \Phi(\gamma)$. 
Then, from (\ref{eq5.21}), we obtain   
$$
	|w'(t)| > \frac{\sqrt{2(\zeta - \Phi(\gamma))}}{2\sqrt{t}} > 0 \quad \mbox{for} \ t \geq t_1
$$
with some $t_1 \geq t_0$,  
which implies that $\lim_{t \to \infty}|w(t)| = \infty$. 
This is a contradiction. Thus we obtain $\zeta = \Phi(\gamma)$, 
and hence (\ref{eq5.20}) holds.

Finally, we will show that $\gamma = K$ in (\ref{eq5.19}). 
Assume by contradiction that $\gamma \neq K$. 
Letting $t \to \infty$ in (\ref{eq2.7}), 
from (\ref{eq5.19}) and (\ref{eq5.20}) we obtain  
$$
	\lim_{t \to \infty}(tw''(t) + (N-2)tw'(t)) = -\frac{(N-2)^2}{2}\gamma + \gamma^{p} \neq 0.
$$
Then we obtain 
$$
	|w''(t) + (N-2)w'(t)| \geq \frac{1}{2}\frac{|-\frac{(N-2)^2}{2}\gamma + \gamma^{p}|}{t} 
	\quad \mbox{for} \ t \geq t_2 
$$
with some $t_2 \geq t_0$. 
Integrating on $[t, t_2]$, and letting $t \to \infty$, 
we obtain $|w'(t) + (N-2)w(t)| \to \infty$ as $t \to \infty$. 
This is a contradiction. 
Thus we obtain $\gamma = K$ in (\ref{eq5.19}), and hence (\ref{eq5.16}) holds.
\end{proof}

\begin{proof}[Proof of Theorems \ref{thm1.1} and \ref{thm1.2}] 
In the cases $N/(N-2) < p < (N+2)/(N-2)$ and $p = N/(N-2)$, there exists 
infinitely many singular positive solutions of (\ref{eq1.2}) 
by Propositions \ref{prp3.1} and \ref{prp3.6}, respectively. 
By Propositions \ref{prp5.1} (i) and \ref{prp5.5}, 
any singular solution satisfies (\ref{eq1.4}) and (\ref{eq1.5}), 
respectively.
Thus Theorem \ref{thm1.1} holds.

In the case $p = (N+2)/(N-2)$, 
there exists infinitely many singular positive solutions of (\ref{eq1.2}) 
by Proposition \ref{prp3.1}.
By Proposition \ref{prp5.1} (ii), for each singular solution $u$, 
there exist constants $\gamma_1 \leq \gamma_2$ 
satisfying (\ref{eq1.11}) and (\ref{eq5.2}), and $u \equiv U_L$ if $\gamma_1 = \gamma_2$.  
Thus Theorem \ref{thm1.2} holds.
\end{proof}


\section{Bounds of the solutions at infinity: Proof of Theorem \ref{thm1.3}}

Throughout this subsection we assume that $p > (N+2)/N$. 
For simplicity, we define ${\cal L}u$ by 
$$
	{\cal L}u = u'' + \left(\frac{N-1}{r} + \frac{r}{2}\right)u' + \frac{1}{p-1}u
$$ 
for $u \in C^2(0, \infty)$. 
Then (\ref{eq1.2}) can be written as 
\begin{equation}
	{\cal L}u + u^p = 0 \quad \mbox{for} \ r > 0.
	\label{eq6.1}
\end{equation}
In order to prove Theorem \ref{thm1.3}, we need the following proposition. 

\begin{proposition}
\label{prp6.1}
Assume that there exists a positive function $v \in C^2(0, \infty)$ 
satisfying 
\begin{equation}
	{\cal L}v + v^p \leq 0 \quad \mbox{for} \ r > 0, \quad 
	\lim_{r \to 0}v(r)  = \infty \quad 
	\mbox{and} \quad 
	\lim_{r \to \infty}r^{2/(p-1)}v(r)  = \ell
	\label{eq6.2}
\end{equation}
with some $\ell > 0$. 
Then $(\ref{eq6.1})$ has a positive solution $\underline{u} \in C^2(0, \infty)$ satisfying 
\begin{equation}
	0 < \underline{u}(r) \leq v(r) \quad \mbox{for} \ r > 0 
	\quad \mbox{and} \quad \lim_{r \to \infty}r^{2/(p-1)}\underline{u}(r) = \ell.
	\label{eq6.3}
\end{equation}
Furthermore, if $\underline{u}$ is bounded near the origin, then 
$\underline{u} \in C^1[0, \infty)$ and satisfies $\underline{u}'(0) = 0$.
\end{proposition}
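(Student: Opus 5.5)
We will construct $\underline{u}$ by a monotone iteration / shooting-from-infinity argument between the subsolution $0$ and the supersolution $v$, posed on expanding annuli. For $n \in \N$ (large), consider on $[1/n, n]$ the boundary value problem
\[
	{\cal L}u + u^p = 0 \ \ \mbox{in} \ (1/n, n), \qquad u(1/n) = 0, \quad u(n) = v(n).
\]
Since ${\cal L}v + v^p \le 0$ and $0$ is a subsolution with $0 \le v$, the standard sub/supersolution method (monotone iteration with the operator ${\cal L} - M$ for $M$ large, which is legitimate since $u\mapsto u^p$ is Lipschitz on $[0, \max_{[1/n,n]} v]$) yields a solution $u_n$ with $0 \le u_n \le v$ on $[1/n, n]$. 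To avoid getting $u_n \equiv 0$ we need the iteration to start from $v$ (the maximal solution below $v$); then $u_n$ is the maximal solution in $[0,v]$. Comparing the maximal solutions on nested intervals, $u_{n+1}$ restricted to $[1/n,n]$ is a supersolution for the $n$-th problem with boundary values $\ge 0$ at $1/n$ and $\le v(n)$ at $n$; instead I will use that $u_n$ extended by $0$ is a subsolution on $[1/(n+1), n+1]$ (it is the maximum of $u_n$ and $0$ locally, hence a weak subsolution), so $u_n \le u_{n+1} \le v$. Thus $u_n$ is nondecreasing in $n$ and bounded by $v$.

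Interior $C^2$ estimates on compact subsets of $(0,\infty)$ follow from the ODE and the bound $u_n \le v$ (integrating $(\rho u_n')' = -\rho(\frac{1}{p-1}u_n + u_n^p)$ gives $C^1$ bounds on compacts), so $u_n \to \underline{u}$ in $C^2_{\rm loc}(0,\infty)$, and $\underline{u}$ solves (\ref{eq6.1}) with $0 \le \underline{u} \le v$. Positivity: $\underline{u} \ge u_{n_0} \not\equiv 0$, and by the strong maximum principle (or ODE uniqueness, as $\underline{u}\ge 0$ with a double zero forces $\underline u\equiv0$) we get $\underline{u} > 0$.

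The main obstacle is the behavior at infinity, namely $r^{2/(p-1)}\underline{u}(r) \to \ell$, and not merely $\le \ell$. The upper bound $\limsup \le \ell$ is immediate from $\underline u \le v$. For the lower bound I would use a barrier at infinity: for $r$ large, linearizing, the equation behaves like $u'' + \frac r2 u' + \frac{1}{p-1}u \approx 0$, whose decaying-slow solutions behave like $r^{-2/(p-1)}$ (the other behaves like $e^{-r^2/4}r^{\cdots}$). I would build a subsolution $\psi(r) = (\ell - \vep) r^{-2/(p-1)}(1 - C r^{-2})$ on $[R,\infty)$, check ${\cal L}\psi + \psi^p \ge 0$ for $R$ large (the leading term $\frac r2\psi' + \frac1{p-1}\psi$ cancels, and the next order has the right sign by choosing $C$ large), and compare with $u_n$ on $[R, n]$: at $r=n$, $u_n(n) = v(n) \ge \psi(n)$ for $n$ large since $r^{2/(p-1)}v \to \ell$; at $r = R$ one must arrange $\psi(R) \le u_n(R)$, which is achieved by taking $R$ where $\psi(R)$ is tiny (use the factor $(1 - Cr^{-2})$ vanishing at $R=\sqrt C$) so that $\psi(R)=0\le u_n(R)$. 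The comparison principle for ${\cal L}+$(Lipschitz nonlinearity) on a bounded interval then gives $u_n \ge \psi$ on $[R,n]$ after checking the zeroth order coefficient issue (handled via the maximal-solution property or by a sweeping argument in the family $\lambda\psi$, $\lambda\in[0,1]$). Letting $n\to\infty$ and $\vep \to 0$ gives $\liminf r^{2/(p-1)}\underline u \ge \ell$.

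Finally, if $\underline{u}$ is bounded near $0$, then $\rho(r)\underline{u}'(r)$ is monotone with a limit $c$ as $r\to0$; if $c\neq 0$ then $|\underline u'|\gtrsim r^{1-N}$ and $\underline u$ would be unbounded, so $c = 0$, and the analogue of (\ref{eq4.3}) gives $|\underline{u}'(r)| \le r^{1-N}\int_0^r s^{N-1}C\,ds = O(r)$, hence $\underline{u} \in C^1[0,\infty)$ with $\underline{u}'(0) = 0$.
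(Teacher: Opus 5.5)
The step that fails is your claim that $u_n\le u_{n+1}$, and your positivity argument $\underline u\ge u_{n_0}\not\equiv0$ depends on it. Extending $u_n$ by $0$ is harmless at $r=1/n$, where $u_n(1/n)=0$. At $r=n$, however, $u_n(n)=v(n)>0$, so the extension jumps down and is not a subsolution on $[1/(n+1),n+1]$.

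In fact the ordering itself is false. Let $d=v-u_{n+1}\ge0$ and $\rho=r^{N-1}e^{r^2/4}$. Then $(\rho d')'=\rho\bigl(\mathcal{L}d-\frac{1}{p-1}d\bigr)\le-\rho\bigl(v^p-u_{n+1}^p+\frac{1}{p-1}d\bigr)\le0$, and $d(1/(n+1))=v(1/(n+1))>0$. If $d(n)=0$, then $d'(n)=0$, so $\rho d'\ge0$ on $[1/(n+1),n]$. This gives $d(1/(n+1))\le d(n)=0$, a contradiction. Hence $u_{n+1}(n)<v(n)=u_n(n)$ for every $n$.

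You can repair this inside your own framework, because monotonicity is not actually needed.

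\textbf{Convergence.} From $0\le u_n\le v$, the mean value theorem gives a point in $[a/2,a]$ where $|u_n'|$ is controlled. Integrating $(\rho u_n')'=-\rho f(u_n)$ from there bounds $u_n'$ on $[a,b]$. Arzel\`a--Ascoli and a diagonal argument then give a subsequence converging in $C^2_{\rm loc}(0,\infty)$ to a solution with $0\le\underline u\le v$.

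\textbf{Positivity and the limit at infinity.} Your barrier supplies both. With $k=2/(p-1)$,
\[
\mathcal{L}\bigl(r^{-k}-Cr^{-k-2}\bigr)=r^{-k-2}\bigl[C+k(k+2-N)-C(k+2)(k+4-N)r^{-2}\bigr],
\]
which is positive on $[\sqrt C,\infty)$ for $C$ large. So $\psi$ is a subsolution of the \emph{linear} operator, and $\mathcal{L}(u_n-\psi)\le-u_n^p\le0$ on $[\sqrt C,n]$. The maximum principle for $\mathcal{L}$ on bounded intervals holds because $e^{-r^2/4}$ is a positive strict supersolution: $\mathcal{L}e^{-r^2/4}=(\frac{1}{p-1}-\frac N2)e^{-r^2/4}<0$ since $p>(N+2)/N$. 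Your sweeping in $\lambda\psi$ also works, since each $\lambda\psi$ is a subsolution. Hence $u_n\ge\psi$ for each large $n$, so $\underline u\ge\psi>0$ on $(\sqrt C,\infty)$. A double zero of $\underline u$ anywhere else would force $\underline u\equiv0$ by ODE uniqueness, so $\underline u>0$ on $(0,\infty)$.

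\textbf{Comparison with the paper.} With these changes your proof is a valid alternative. The paper instead subtracts the positive linear solution $\phi_\ell$, which satisfies $\mathcal{L}\phi_\ell=0$, $\phi_\ell'(0)=0$, $r^{2/(p-1)}\phi_\ell\to\ell$, and is built from the heat kernel. It then runs a monotone iteration on the whole half-line for $\underline u-\phi_\ell\in[0,v-\phi_\ell]$, using $v\to\infty$ at $0$ in its maximum principle. Positivity and the limit at infinity then follow by squeezing $\phi_\ell\le\underline u\le v$, with no barrier needed. Your route is more local and elementary and never uses $v\to\infty$ at $0$; the paper's route additionally gives the global lower bound $\underline u\ge\phi_\ell$.
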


In order to prove Proposition \ref{prp6.1}, we need some lemmas. 

\begin{lemma}
\label{lem6.2}
\begin{itemize}
\item[{\rm (i)}] Assume that $u$ satisfies 
$$
	{\cal L}u \leq 0 \quad \mbox{for} \ r > 0 
	\quad \mbox{and} \quad \liminf_{r \to \infty}r^{2/(p-1)}u(r) \geq 0.
$$
Assume in addition that either $u'(0) = 0$ or $\lim_{r \to 0}u(r) = \infty$. 
Then $u(r) \geq 0$ for $r \geq 0$.

\item[{\rm (ii)}] 
Assume that $u$ satisfies 
${\cal L}u \leq 0$ for $0 < r \leq R$ and $u(R) \geq 0$.
Assume in addition either that $u'(0) = 0$ or $\lim_{r \to 0}u(r) = \infty$. 
Then $u(r) \geq 0$ for $0 \leq r \leq R$.
\end{itemize}
\end{lemma}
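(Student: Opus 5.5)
We plan to prove both parts by a maximum-principle argument built on the divergence form of ${\cal L}$. Writing $\rho(r) = r^{N-1}e^{r^2/4}$ as in (\ref{eq4.2}), we have
$$
	(\rho u')' = \rho\left({\cal L}u - \frac{1}{p-1}u\right).
$$
The difficulty is that the zeroth-order term $\frac{1}{p-1}u$ has the wrong sign for a direct maximum principle. We therefore factor out a positive solution of the linear equation. The natural choice is $\phi(r) = r^{-2/(p-1)}$, for which a direct computation gives
$$
	{\cal L}\phi = \frac{2}{p-1}\left(\frac{2}{p-1}+2-N\right) r^{-2/(p-1)-2}.
$$
Since $p > N/(N-2)$ gives $\frac{2}{p-1} < N-2$, this quantity is negative, so $\phi$ is a positive strict supersolution on $(0,\infty)$. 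For $p = N/(N-2)$ we would instead take $\phi = r^{-(N-2)}$, which gives ${\cal L}\phi = 0\cdot(\ldots) + \frac{r}{2}\phi' + \frac{1}{p-1}\phi = \phi\bigl(-\frac{N-2}{2}+\frac{N-2}{2}\bigr) = 0$, so $\phi$ is a positive solution.

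Next we set $h = u/\phi$. Expanding ${\cal L}(\phi h)$ gives
$$
	\phi h'' + \left(2\phi' + \left(\frac{N-1}{r}+\frac{r}{2}\right)\phi\right)h' + ({\cal L}\phi)h = {\cal L}u \leq 0.
$$
Because ${\cal L}\phi \leq 0$, the operator acting on $h$ satisfies the classical minimum principle on any interval where $h$ attains a negative minimum. Indeed, at an interior negative minimum we have $h''\geq 0$, $h'=0$ and $({\cal L}\phi)h \geq 0$. If ${\cal L}\phi<0$ strictly this already gives ${\cal L}u>0$, a contradiction. When ${\cal L}\phi = 0$, or in the nonstrict case, we apply the usual Hopf-type perturbation, or equivalently integrate $(\rho\phi^2 h')' \leq \rho\phi({\cal L}\phi) h$ and use that it is $\leq 0$ where $h<0$.

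For part (ii), we argue by contradiction. Suppose $\min_{(0,R]} h < 0$. Since $u(R)\geq 0$ gives $h(R) \geq 0$, the minimum cannot occur at $R$. Near $0$ we have $\phi\to\infty$. If $u\to\infty$, then $h>0$ near $0$. If $u'(0)=0$, then $u$ is bounded, so $h\to 0$ as $r\to 0$. In either case a negative infimum would be attained at an interior point, which contradicts the minimum principle.

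For part (i), the hypothesis $\liminf r^{2/(p-1)}u \geq 0$ says exactly $\liminf_{r\to\infty} h \geq 0$. The same interior argument then applies on $(0,\infty)$, with both endpoints controlled. In the case $p=N/(N-2)$ with $\phi=r^{-(N-2)}=r^{-2/(p-1)}$ the hypothesis again reads $\liminf h\ge 0$, so the argument is identical.

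We expect the main obstacle to be the borderline case in which ${\cal L}\phi \equiv 0$, or more generally a degenerate minimum, where the weak inequality does not immediately contradict. Our first attempt there is to use the integrated form: on a maximal interval $(r_1,r_2)$ where $h<0$, with $h=0$ or the appropriate limit at the ends, the function $\rho\phi^2h'$ is nonincreasing. Since $h$ must first decrease and then increase on this interval, this is impossible unless $h' \equiv 0$. That in turn forces $h\equiv$ const $<0$, contradicting the boundary behavior. Handling the endpoint $r_1=0$ in the case $u'(0)=0$ requires checking that $\rho\phi^2h'\to 0$ as $r\to0$, which follows from the boundedness of $u$ and $u'$ there.
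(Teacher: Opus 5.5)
Your argument is correct for $p\ge N/(N-2)$, and it takes a genuinely different route from the paper. Every use of Lemma~\ref{lem6.2} in the paper leads to Theorem~\ref{thm1.3}, so this range is all the paper needs.

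The paper divides by the Gaussian $\phi_0=e^{-r^2/4}$. It satisfies ${\cal L}\phi_0=-(\frac N2-\frac1{p-1})\phi_0<0$ for every $p>(N+2)/N$, which is the standing assumption of Section~6. So a single interior-minimum argument works for the whole range, with no degenerate case.

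You divide by $r^{-2/(p-1)}$, and the gain is at infinity. With $h=r^{2/(p-1)}u$, the hypothesis is literally $\liminf_{r\to\infty}h\ge 0$, so a negative infimum is automatically attained. For the paper's quotient $ue^{r^2/4}$ this is not automatic. The condition $\liminf r^{2/(p-1)}u\ge0$ does not by itself exclude $ue^{r^2/4}\to-\infty$, and the paper simply assumes the minimum exists.

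The cost is the range of $p$. For $(N+2)/N<p<N/(N-2)$ you get ${\cal L}(r^{-2/(p-1)})=\frac{2}{p-1}(\frac{2}{p-1}+2-N)r^{-2/(p-1)-2}>0$. There neither your pointwise argument nor your integrated one gives a contradiction. So your proof does not cover the full range in which Section~6 states the lemma.

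You can reach the full range and keep your advantage at infinity. Take as weight the positive solution $\phi_\ell$ of Lemma~\ref{lem6.3} with $\ell=1$, which satisfies ${\cal L}\phi_1=0$ and $r^{2/(p-1)}\phi_1\to1$. Then run your monotonicity argument for $\rho\phi_1^2h'$ with $h=u/\phi_1$. In the regular case $\rho\phi_1^2h'\to0$ as $r\to0$, because $h\in C^1[0,\infty)$ (as $\phi_1(0)>0$) and $\rho(0)=0$.

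There are two small slips.

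First, the sign in your integrated inequality is reversed. Set $c(r)=\frac{N-1}{r}+\frac r2=\rho'/\rho$. Then ${\cal L}u=\phi h''+(2\phi'+c\phi)h'+({\cal L}\phi)h$, so $(\rho\phi^2h')'=\rho\phi\bigl({\cal L}u-({\cal L}\phi)h\bigr)\le-\rho\phi({\cal L}\phi)h$, not $\le\rho\phi({\cal L}\phi)h$. Your conclusion that this is $\le0$ where $h<0$ is still the right one.

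Second, at $p=N/(N-2)$ one has $\rho\phi^2h'=e^{r^2/4}\bigl((N-2)u+ru'\bigr)$. This tends to $(N-2)u(0)$ as $r\to0$, not to $0$. The slip is harmless:
\begin{itemize}
\item If $h<0$ on some interval $(0,r_2)$, then $u(0)\le0$.
\item Since $\rho\phi^2h'$ is nonincreasing, it is then $\le0$ on $(0,r_2)$.
\item So $h$ is nonincreasing there and cannot climb back to $0$ at $r_2$, nor to a nonnegative $\liminf$ if $r_2=\infty$.
\end{itemize}
Alternatively, $h(r)\to0$ as $r\to0$ together with the mean value theorem directly gives a point left of the minimum where $h'<0$.
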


\begin{proof}
Define $\phi_0(r) = e^{-r^2/4}$ for $r \geq 0$. Then $\phi_0$ satisfies 
$$
	\phi_0'' + \left(\frac{N-1}{r} + \frac{r}{2}\right)\phi_0' + \frac{N}{2}\phi_0 = 0
	\quad \mbox{for} \ r > 0.
$$
We consider the case where $u'(0) = 0$. 
Define $w(x) = u(|x|)/\phi(|x|)$ with $x \in \R^N$. 
Then $w$ satisfies
\begin{equation}
	\Delta w + \left(2\frac{\nabla \phi_0}{\phi_0}+ \frac{1}{2}\phi_0 x\right)\cdot 
	\nabla w - \left(\frac{N}{2}-\frac{1}{p-1}\right)w \leq 0 
	\quad \mbox{in} \ \R^N.
	\label{eq6.4}
\end{equation}
It suffices to show that $w(x) \geq 0$ for all $x \in \R^N$. 
Assume by contradiction that there exists $x_1 \in \R^N$ such that $w(x_1) < 0$. 
Then there exists $x_2 \in \R^N$ such that $w(x_2) = \min_{x \in \R^N}w(x)$, 
and hence we have $\Delta w(x_2) \geq 0$, $\nabla w(x_2) = 0$ and $w(x_2) < 0$, 
which contradicts (\ref{eq6.4}) since $N/2-1/(p-1) < 0$. 
Thus we obtain $w(x) \geq 0$ in $\R^N$, and hence $u(r) \geq 0$ for $r \geq 0$. 
In the case where $\lim_{r \to 0}u(r) = \infty$, 
the conclusion follows by a slight modification of the above argument. Thus (i) holds. 
Since the proof of (ii) is similar to (i), we omit it.
\end{proof}

Following the argument in \cite[Lemma 2.1]{Naia}, 
we will show the existence of positive solution $\phi$ of ${\cal L}\phi = 0$ for $r > 0$.

\begin{lemma}
\label{lem6.3}
For $\ell > 0$, there exists a positive solution 
$\phi_{\ell} \in C^2(0, \infty)\cap C^1[0, \infty)$ of the problem
\begin{equation}
	{\cal L}\phi_{\ell} = 0 \quad \mbox{for} \ r > 0, \quad  
	\phi_{\ell}'(0) = 0 \quad \mbox{and} \quad \lim_{r \to \infty}r^{2/(p-1)}\phi_{\ell}(r) = \ell.
	\label{eq6.5}
\end{equation}
\end{lemma}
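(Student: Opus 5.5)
The linear equation ${\cal L}\phi=0$ can be written in terms of Kummer's function. Setting $\phi(r)=M(r^2/4)$, i.e.\ $\phi(r)=M(\xi)$ with $\xi=r^2/4$, the operator $u''+(\frac{N-1}{r}+\frac r2)u'$ becomes
$$\xi M''+\Big(\frac N2+\xi\Big)M'+\frac{1}{p-1}M=0 .$$
With $M(\xi)=e^{-\xi}K(\xi)$ this turns into Kummer's equation
$$\xi K''+\Big(\frac N2-\xi\Big)K'-\Big(\frac N2-\frac{1}{p-1}\Big)K=0,$$
that is, $\xi K''+(b-\xi)K'-aK=0$ with $b=N/2$ and $a=N/2-1/(p-1)>0$; the inequality $a>0$ is exactly $p>(N+2)/N$. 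I take the regular solution $K=M(a,b,\xi)$, which has a convergent power series with positive coefficients since $a,b>0$. Then $\phi(r)=e^{-r^2/4}M(a,b,r^2/4)$ is smooth and even in $r$, so $\phi\in C^2(0,\infty)\cap C^1[0,\infty)$, $\phi'(0)=0$, and $\phi>0$ on $[0,\infty)$.

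For the behavior at infinity I use the classical asymptotics $M(a,b,\xi)\sim\frac{\Gamma(b)}{\Gamma(a)}e^{\xi}\xi^{a-b}$ as $\xi\to\infty$. This gives
$$\phi(r)\sim\frac{\Gamma(N/2)}{\Gamma(a)}\Big(\frac{r^2}{4}\Big)^{-1/(p-1)},$$
since $a-b=-1/(p-1)$. Hence $r^{2/(p-1)}\phi(r)\to c_0=\frac{\Gamma(N/2)}{\Gamma(a)}4^{1/(p-1)}>0$. Because the equation is linear, $\phi_\ell=(\ell/c_0)\phi$ satisfies (\ref{eq6.5}).

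To stay self-contained in the style of \cite[Lemma 2.1]{Naia}, without quoting special-function asymptotics, I would argue by ODE methods instead. Take the solution $\phi$ of ${\cal L}\phi=0$ with $\phi(0)=1$ and $\phi'(0)=0$. First I show positivity. Writing the equation as $(\rho\phi')'=-\rho\phi/(p-1)$, we get $\phi'<0$ as long as $\phi>0$. Integrating on $[0,r]$ (the analogue of (\ref{eq3.4}) with the nonlinearity removed) gives
$$r^{N-1}\phi'+\frac{r^N}{2}\phi=\Big(\frac N2-\frac{1}{p-1}\Big)\int_0^r s^{N-1}\phi\,ds>0 .$$
At a first zero $r_1$ the left-hand side equals $r_1^{N-1}\phi'(r_1)\le 0$, which is a contradiction, so $\phi>0$ everywhere. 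Next set $w(s)=r^{2/(p-1)}\phi(r)$ with $s=\log r$. Then $w$ solves a linear equation that, for large $s$, behaves like $\frac12 e^{2s}w'+(\text{bounded})\,w'+w''=0$ up to lower-order terms of size $O(1)\,w$. From this I would show that $w'$ decays like $e^{-2s}w$, so $w$ has a finite limit $c_0\ge0$.

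The main obstacle is proving $c_0>0$, i.e.\ ruling out the possibility that $\phi$ is the fast-decaying solution $\sim r^{-N+2/(p-1)}e^{-r^2/4}$. The second-order linear equation has exactly two behaviors at infinity: the slow one $r^{-2/(p-1)}$ and the fast one $r^{2/(p-1)-N}e^{-r^2/4}$. If $\phi$ decayed fast, then $\int_0^\infty s^{N-1}\phi\,ds<\infty$ while the left-hand side of the identity above tends to $0$. That contradicts its right-hand side being positive and increasing. This gives $c_0>0$, and the scaling step then finishes the proof.
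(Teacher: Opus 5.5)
Your Kummer-function argument is correct and complete. The substitution $\xi=r^2/4$, $\phi=e^{-\xi}K$ does turn ${\cal L}\phi=0$ into Kummer's equation with $a=\frac N2-\frac1{p-1}>0$ and $b=\frac N2$. Positivity and $\phi'(0)=0$ follow from the even power series with positive coefficients. The classical asymptotics then give $r^{2/(p-1)}\phi\to c_0=\frac{\Gamma(N/2)}{\Gamma(a)}4^{1/(p-1)}$.

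This is a different route from the paper, which never solves the ODE. The paper takes $\phi(y)=w(y,1)$, where $w$ is the heat evolution of $|y|^{-2/(p-1)}$. It gets ${\cal L}\phi=0$ from the scaling identity $w(x,t)=t^{-1/(p-1)}\phi(x/\sqrt t)$, and positivity and regularity from the Gaussian convolution. The limit at infinity comes from the initial trace, $|x|^{2/(p-1)}w(x,t)\to1$ as $t\to0$ with $x\neq0$ fixed.

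In fact the two functions agree up to your constant. Writing $|y|^{-2/(p-1)}$ as a superposition of Gaussians $e^{-\lambda|y|^2}$ and substituting $s=4\lambda/(1+4\lambda)$, the heat-kernel formula at $t=1$ reduces to the Euler integral $e^{-\xi}M(a,b,\xi)=\frac{\Gamma(b)}{\Gamma(a)\Gamma(b-a)}\int_0^1 e^{-\xi s}s^{b-a-1}(1-s)^{a-1}\,ds$, where $b-a=\frac1{p-1}>0$. So the paper's $\phi$ is exactly your $\phi/c_0$. Your route gives an explicit formula and the value of $c_0$. The paper's route needs no special-function asymptotics and produces the normalization $\ell=1$ automatically.

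The ODE fallback you sketch is not yet a proof. The positivity argument via the integrated identity is fine. However, the decisive step, that $c_0=0$ forces the fast behaviour $r^{2/(p-1)-N}e^{-r^2/4}$, is only asserted (``exactly two behaviors at infinity''), and that is the whole difficulty. The existence of the limit of $w$ is likewise only sketched.

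For $p\ge N/(N-2)$ the gap can be closed. In $s=\log r$, $w=r^{2/(p-1)}\phi$ satisfies $(\eta w')'=\kappa\eta w$ with $\eta=\exp\bigl((N-2-\frac4{p-1})s+\frac14e^{2s}\bigr)$ and $\kappa=\frac2{p-1}\bigl(N-2-\frac2{p-1}\bigr)$. When $\kappa\ge0$, i.e.\ $p\ge N/(N-2)$, suppose $w\to0$. Then the nondecreasing function $\eta w'$ must stay negative, so $|w'|\le C/\eta$ and $w(s)\le C\int_s^\infty\eta^{-1}$. This is precisely the decay that makes $r^{N-1}\phi'+\frac{r^N}{2}\phi\to0$, which your contradiction requires. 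The range $(N+2)/N<p<N/(N-2)$ needs an extra iteration.

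A simpler way to make your first route self-contained is to use the Euler integral above. The asymptotics of $e^{-\xi}M(a,b,\xi)$ follow from it by an elementary argument: substitute $s=\tau/\xi$ and let $\xi\to\infty$.
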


\begin{proof}
Define 
\begin{equation}
	w(x, t) = \frac{1}{(4\pi t)^{N/2}}\int_{\R^N}e^{-|x-y|^2/(4t)}|y|^{-2/(p-1)}dy
	\quad \mbox{for} \ x \in \R^N, \ t > 0.
	\label{eq6.6}
\end{equation}
Note that $|\cdot|^{-2/(p-1)} \in L^1_{\rm loc}(\R^N)$ since $p > (N+2)/N$. 
Then $w$, defined by (\ref{eq6.6}), satisfies $w_t = \Delta w$ in $\R^N\times (0, \infty)$ and 
\begin{equation}
	w(x, t) \to |x|^{-2/(p-1)} \quad 
	\mbox{in}  \ L^1_{\rm loc}(\R^N) \quad \mbox{as} \ t \to 0.
	\label{eq6.7}
\end{equation}
From (\ref{eq6.6}) we see that $w(x, t) = \mu^{2/(p-1)}w(\mu x, \mu^2 t)$ for all $\mu > 0$. 
Letting $\mu = 1/\sqrt{t}$, we obtain 
\begin{equation}
	w(x,  t) = t^{-1/(p-1)}\phi(x/\sqrt{t}), 
	\label{eq6.8}
\end{equation}
where $\phi(y) = w(y, 1)$. 
It can be easily choked that $\phi \in C^2(\R^N)$ is radially symmetric and satisfies
$$
	\Delta \phi + \frac{1}{2}x\cdot \nabla \phi + \frac{1}{p-1}\phi = 0 
	\quad \mbox{in} \ \R^N.
$$
We denote $\phi = \phi(r)$ with $r = |y|$. 
Then $\phi(r)$ solves ${\cal L}\phi = 0$ for $r > 0$ and $\phi'(0) = 0$. 
From (\ref{eq6.7}) and (\ref{eq6.8}), for $x \neq 0$ we have  
$$
	(|x|/\sqrt{t})^{2/(p-1)}\phi(|x|/\sqrt{t}) = |x|^{2/(p-1)}w(x, t) \to 1 
	\quad \mbox{as} \ t \to 0. 
$$
This implies that $\lim_{r \to \infty}r^{2/(p-1)}\phi(r) = 1$. 
Then $\phi_{\ell}(r) = \ell\phi(r)$ solves (\ref{eq6.5}).
\end{proof}

\begin{lemma}
\label{lem6.4}
Let $g \in C[0, \infty)$ satisfy $g(r) \geq 0$ for all $r \geq 0$. 
Assume that there exists a positive function $v \in C^2(0, \infty)$ 
satisfying 
$$
	{\cal L}v + g(r) \leq 0 \quad \mbox{for} \ r > 0, 
	\quad \lim_{r \to 0}v(r) = \infty 
	\quad \mbox{and} \quad 
	\lim_{r \to \infty}r^{2/(p-1)}v(r) = 0.
$$ 
Then there exists a unique solution $u \in C^2(0, \infty)\cap C^1[0, \infty)$ 
of the problem
\begin{equation}
	\left\{
	\begin{array}{l}
	{\cal L}u + g(r) = 0 \quad \mbox{for} \ r > 0,
	\\[1ex]
	u'(0) = 0 \quad \mbox{and} \quad 0 \leq u(r) \leq v(r) 
	\quad \mbox{for} \ r > 0.
	\end{array}
	\right.
	\label{eq6.9}
\end{equation}
\end{lemma}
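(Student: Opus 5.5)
Uniqueness comes first and follows from the comparison principle, Lemma \ref{lem6.2}. Let $u_1, u_2$ both solve (\ref{eq6.9}) and set $d = u_1 - u_2$. Then ${\cal L}d = 0$ and $d'(0) = 0$. Also $|d| \leq v$, so $r^{2/(p-1)}|d(r)| \leq r^{2/(p-1)}v(r) \to 0$ as $r \to \infty$. Lemma \ref{lem6.2} (i), applied to $d$ and to $-d$, gives $d \geq 0$ and $d \leq 0$, so $d \equiv 0$.

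For existence I would use an exhausting sequence of bounded problems. For $R > 0$, solve
\[
{\cal L}u_R + g = 0 \ \mbox{on} \ (0, R), \qquad u_R'(0) = 0, \qquad u_R(R) = 0.
\]
This is a linear radial problem. Write the equation in divergence form $(\rho u')' + \rho\,(u/(p-1) + g) = 0$ with $\rho = r^{N-1}e^{r^2/4}$. Solvability follows from Fredholm theory once the homogeneous problem (${\cal L}h = 0$, $h'(0) = 0$, $h(R) = 0$) has only the trivial solution, and Lemma \ref{lem6.2} (ii) applied to $\pm h$ gives exactly that. Alternatively, shoot from $r = 0$ and add a suitable multiple of $\phi_1$ from Lemma \ref{lem6.3}. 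Since $\phi_1 > 0$, the combination $h = c\,\phi_1$ can vanish at $R$ only if $c = 0$, so the shooting constant can always be adjusted to hit $u_R(R) = 0$.

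Next come the bounds on $u_R$:
\begin{itemize}
\item ${\cal L}u_R = -g \leq 0$ and $u_R(R) = 0$, so Lemma \ref{lem6.2} (ii) gives $u_R \geq 0$.
\item $v - u_R$ satisfies ${\cal L}(v - u_R) \leq 0$, $(v - u_R)(R) = v(R) > 0$, and $v - u_R \to \infty$ as $r \to 0$ (since $u_R$ is bounded there). Lemma \ref{lem6.2} (ii) gives $u_R \leq v$ on $[0, R]$.
\item For $R < R'$, the difference $u_{R'} - u_R$ has ${\cal L}(u_{R'} - u_R) = 0$, vanishing derivative at $0$, and value $u_{R'}(R) \geq 0$ at $R$. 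Hence $u_R$ is nondecreasing in $R$.
\end{itemize}

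I would then pass to the limit $u = \lim_{R \to \infty} u_R \leq v$. On compact subsets of $(0, \infty)$, the integral form $-\rho u_R'(r) = \int_0^r \rho\,(u_R/(p-1) + g)\,ds$ gives $C^1$ bounds, so $u$ solves ${\cal L}u + g = 0$ on $(0, \infty)$ with $0 \leq u \leq v$.

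The main obstacle is the behavior at the origin: $v$ blows up there, so $u \leq v$ gives no local bound near $r = 0$ and does not by itself yield $u \in C^1[0, \infty)$ with $u'(0) = 0$. I would try to get a uniform bound on $u_R$ near $0$ as follows.
\begin{itemize}
\item Fix $r_1 > 0$. On $[0, r_1]$, compare $u_R$ with $M + Cr_1^2 - Cr^2$ with $C, M$ independent of $R$. Then ${\cal L}$ of this barrier is at most $-2NC - Cr^2 + (M + Cr_1^2)/(p-1)$, which is $\leq -\|g\|_{L^\infty(0, r_1)}$ if $C$ is chosen large enough relative to $M$.
\item The boundary value $u_R(r_1) \leq v(r_1)$ is bounded uniformly in $R$, so we may take $M = v(r_1)$.
\item The linear zero-order term has the wrong sign for a direct maximum principle. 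I would handle it with the same device as in Lemma \ref{lem6.2}: divide by $e^{-r^2/4}$ so that the zero-order coefficient $\frac{1}{p-1} - \frac{N}{2}$ becomes negative, using $p > (N+2)/N$. This turns the comparison into one with a good-sign zero-order term.
\end{itemize}
This gives $u_R \leq C'$ on $[0, r_1]$ uniformly in $R$. Then $u$ is bounded near $0$, and the integral identity gives $|u'(r)| \leq Cr$, so $u \in C^1[0, \infty)$ with $u'(0) = 0$.
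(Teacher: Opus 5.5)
Your proof is correct, but you run the exhaustion in the opposite direction from the paper. That choice is what creates the obstacle at the origin, which the paper avoids.

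The paper solves the truncated problems with boundary value $u_R(R)=v(R)$ rather than $0$. Lemma~\ref{lem6.2}(ii) still gives $0\le u_R\le v$, but now the approximations \emph{decrease} in $R$: for $R<R'$, the difference $u_R-u_{R'}$ satisfies $\mathcal{L}(u_R-u_{R'})=0$, has zero derivative at $0$, and takes the value $v(R)-u_{R'}(R)\ge 0$ at $R$. Each $u_R$ is also nonincreasing in $r$ by the integral identity. Hence $0\le u_R(r)\le u_R(0)\le u_{R_1}(0)$ for $0\le r\le R$ and all $R\ge R_1$. The uniform bound at the origin is automatic, and no barrier is needed.

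Your choice $u_R(R)=0$ gives an increasing family (the solution approached from below), so you do need the local barrier. It works, with a few adjustments:
\begin{itemize}
\item In your bound for $\mathcal{L}\psi$ the coefficient of $C$ is $-(2N-r_1^2/(p-1))$. So ``choose $C$ large'' only works if you also take $r_1^2<2N(p-1)$.
\item The comparison of $\psi-u_R$ on $[0,r_1]$ (with $\mathcal{L}(\psi-u_R)\le 0$, $(\psi-u_R)'(0)=0$, and $(\psi-u_R)(r_1)\ge 0$ for $R>r_1$) is exactly Lemma~\ref{lem6.2}(ii). You can cite it rather than redo the $e^{-r^2/4}$ device.
\item Your $C^1$ bounds on compact subsets of $(0,\infty)$ via $-\rho u_R'(r)=\int_0^r\cdots$ already require control of $u_R$ near $0$. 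The origin bound should therefore be established before you pass to the limit.
\end{itemize}

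For uniqueness, you apply Lemma~\ref{lem6.2}(i) to $\pm d$. The paper instead observes that any solution of $\mathcal{L}d=0$ with $d'(0)=0$ is a multiple of $\phi_\ell$, and $r^{2/(p-1)}d\to 0$ forces that multiple to vanish. Both arguments are valid. The paper's is more direct, since it relies only on the one-dimensionality of the regular solutions rather than on the maximum-principle argument behind Lemma~\ref{lem6.2}(i).
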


\begin{proof}
First we note that, for any $R > 0$, the problem
\begin{equation}
	\left\{
	\begin{array}{c}
	{\cal L}u + g(r) = 0 \quad \mbox{for} \ 0 < r < R,
	\\[1ex]
	u'(0) = 0 \quad \mbox{and} \quad u(R) = v(R).
	\end{array}
	\right.
	\label{eq6.10}
\end{equation}
has a solution $u \in C^2(0, R]\cap C^1[0, R]$ satisfying 
\begin{equation}
	0 \leq u(r) \leq v(r) \quad \mbox{for} \ 0 < r \leq R.
	\label{eq6.11}
\end{equation}
In fact, let $w$ be a solution of ${\cal L}w + g(r) = 0$ for $r > 0$ 
satisfying $w'(0) = 0$ and $w(0) = 0$. 
Take a constant $C \in \R$ such that $w(R) + C\phi_{\ell}(R) = v(R)$, where 
$\phi_{\ell}$ is the solution of (\ref{eq6.5}) obtained by Lemma \ref{lem6.3}. 
Then $u(r) = w(r) + C\phi_{\ell}(r)$ solves (\ref{eq6.10}).
Since $u$ satisfies ${\cal L}u = -g(r) \leq 0$, $u'(0) = 0$ and $u(R) = v(R) > 0$, 
from Lemma \ref{lem6.2} we have $u(r) \geq 0$ for $0 \leq r \leq R$. 
Let $w(r) = v(r) -u(r)$. 
Then $w$ satisfies ${\cal L}w \leq 0$,  $w(r) \to \infty$ as $r \to 0$ and  $w(R) = 0$. 
From Lemma \ref{lem6.2}, we have $w(r) \geq 0$ for $0 < r \leq R$, 
which implies that $u(r) \leq v(r)$ for $0 < r \leq R$.  
Thus (\ref{eq6.11}) holds.

Let $\{r_n\}$ be a sequence such that $r_n \to \infty$ as $n \to \infty$. 
By the above argument, for each $n = 1, 2, \ldots$, the problem
\begin{equation}
	\left\{
	\begin{array}{c}
	{\cal L}u + g(r) = 0 \quad \mbox{for} \ 0 < r < r_n,
	\\[1ex]
	u'(0) = 0 \quad \mbox{and} \quad u(r_n) = v(r_n),
	\end{array}
	\right.
	\label{eq6.12}
\end{equation}
has a solution $u_n \in C^2(0, r_n]\cap C^1[0, r_n]$ satisfying 
$0 \leq u_n(r) \leq v(r)$ for $0 \leq r \leq r_n$.  
Furthermore, for each $n = 1, 2, \ldots$, we have 
\begin{equation}
	u_{n+1}(r) \leq u_n(r) \quad 
	\quad \mbox{for} \ 0 \leq r \leq r_n. 
	\label{eq6.13}
\end{equation}
In fact, define $z_n(r) = u_{n}(r) -u_{n+1}(r)$ for $0 \leq r \leq r_n$. 
Then $z_n$ satisfies $Lz_n = 0$, $z_n'(0) = 0$ and $z_n(r_n) = v(r_n) -u_{n+1}(r_n) \geq 0$. 
By Lemma \ref{lem6.2}, we have $z_n(r) \geq 0$ for $0 \leq r \leq r_n$.
Thus we obtain (\ref{eq6.13}).  

Take any $R > 0$. 
For sufficiently large $n$ satisfying $r_n > R$, the solution $u_n$ of (\ref{eq6.12}) satisfies 
$$
	u_n'(r) = -\frac{1}{\rho(r)}\int^r_0 \rho(s)\left(\frac{1}{p-1}u_n(s) + g(s)\right)ds < 0 
	\quad \mbox{for} \ 0 < r \leq R,
$$
where $\rho(r) = r^{N-1}e^{r^2/4}$. 
Then we see that $0 < u_n(r) \leq u_n(0) \leq u_1(0)$ for $0 \leq r \leq R$ and 
$$
	|u_n'(r)| \leq \int^r_0\left(\frac{1}{p-1}u_n(s)+  g(s)\right)ds 
	\leq \int^R_0\left(\frac{1}{p-1}u_1(0)+  g(s)\right)ds 
	\quad \mbox{for} \  0 \leq r \leq R.
$$
Then $\{u_n\}$ is uniformly bounded and equicontinuous on $[0, R]$. 
By the Arzel\'a-Ascoli Theorem, there exist $u_R \in C[0, R]$ and a subsequence, 
which we denote by $\{u_n\}$ again, such that $u_n \to u_R$ uniformly on $[0, R]$. 
Note that $u_n$ satisfies 
$$
	u_n(r) = u_n(0) - \int^r_0 \frac{1}{\rho(s)}\left(\int^s_0 \rho(t)
	\left(\frac{1}{p-1}u_n(s) + g(s)\right)dt\right)ds 
	\quad \mbox{for} \  0 <  r \leq R.
$$
Letting $n \to \infty$, we have 
$$
	u_R(r) = u_R(0) - \int^r_0 \frac{1}{\rho(s)}\left(\int^s_0 \rho(t)
	\left(\frac{1}{p-1}u_R(t) + g(t)\right)dt\right)ds
	\quad \mbox{for} \  0 <  r \leq R.
$$
This implies that $u_R \in C^2(0, R]\cap C^1[0, R]$ satisfies 
${\cal L}u_R + g(r) = 0$ for $0 < r < R$. 
Furthermore, we obtain $0 \leq u_R(r) \leq v(r)$ for $0 < r \leq R$ and 
$u_R(0) \leq u_1(0)$. 

Let $\{R_n\}$ be a sequence satisfying $R_n \to \infty$ as $n \to \infty$. 
By the diagonal argument, we conclude that there exists a solution 
$u \in C^2(0, \infty)\cap C^1[0, \infty)$ of (6.9). 

We will show that (\ref{eq6.10}) has a unique solution. 
Assume that (\ref{eq6.9}) has solutions $u_1$ and $u_2$. 
Since $\lim_{r \to \infty}r^{2/(p-1)}v(r) = 0$, we have 
\begin{equation}
	\lim_{r \to \infty}r^{2/(p-1)}u_i(r) = 0 \quad \mbox{for} \ i = 1, 2.
	\label{eq6.14}
\end{equation}
Let $\phi(r) = u_1(r)-u_2(r)$. Then $\phi$ satisfies 
\begin{equation}
	{\cal L}\phi = 0 \quad \mbox{for} \ r > 0
	\quad \mbox{and} \quad \phi'(0) = 0.
	\label{eq6.15}
\end{equation}
By the uniqueness for the ODE problem,  
any solution of (\ref{eq6.15}) must be a constant multiple of 
$\phi_{\ell}$, where $\phi_{\ell}$ is the solution of the problem (\ref{eq6.5}). 
Then we have $\phi(r) = C\phi_{\ell}(r)$ with some constant $C \in \R$. 
From (\ref{eq6.14}) we have $\lim_{r \to \infty}r^{2/(p-1)}\phi(r) = 0$, 
which implies that $C = 0$, and hence $\phi(r) \equiv 0$. 
Thus the solution $u$ of (\ref{eq6.9}) is unique.
\end{proof}

Let us consider the equation 
\begin{equation}
	{\cal L}\hat{u} + (\hat{u} + \phi_{\ell})^p = 0  
	\quad \mbox{for} \ r > 0,
	\label{eq6.16} 
\end{equation}
where $\phi_{\ell}$ is the solution obtained by Lemma \ref{lem6.3}. 
Assume that there exists a positive function 
$\hat{v} \in C^2(0, \infty)$ satisfying 
\begin{equation}
	{\cal L}\hat{v} + (\hat{v} + \phi_{\ell})^p \leq 0 
	\quad \mbox{for} \  r > 0, \quad 
	\lim_{r \to \infty}\hat{v}(r) = \infty 
	\quad \mbox{and} \quad 
	\lim_{r \to \infty}r^{2/(p-1)}\hat{v}(r) = 0.
	\label{eq6.17}
\end{equation}
For each $u \in C[0, \infty)$ satisfying $0 \leq u(r) \leq \hat{v}(r)$ for $r > 0$, 
define the mapping $v = Tu$ by setting $v \in C^2(0, \infty)\cap C^1[0, \infty)$ 
to be a unique solution of 
\begin{equation}
	\left\{
	\begin{array}{l}
	{\cal L}v + (u + \phi_{\ell})^p = 0 \quad \mbox{for} \ r > 0, 
	\\[1ex]
	\dsp
	v'(0) = 0 \quad  \mbox{and} \quad 0 \leq v(r) \leq \hat{v}(r) \quad 
	\mbox{for} \ r > 0.
	\end{array}
	\right.
	\label{eq6.18}
\end{equation}
Assume that $u \in C[0, \infty)$ satisfies $0 \leq u(r) \leq \hat{v}(r)$ for $r > 0$.
Let $g(r) = (u(r) + \phi_{\ell}(r))^p$. 
Since $g(r) \leq (\hat{v}(r) + \phi_{\ell}(r))^p$, 
we have ${\cal L}\hat{v} + g(r) \leq 0$ for $r > 0$. 
By Lemma \ref{lem6.4}, there exists a unique solution $v$ of (\ref{eq6.18}).
Thus the mapping $T$ is well defined.

\begin{lemma}
\label{lem6.5}
Assume that $u_1, u_2 \in C[0, \infty)$ satisfy $0 \leq u_1(r) \leq u_2(r) \leq \hat{v}(r)$ for $r \geq 0$. 
Let $v_i = Tu_i$ for $i = 1, 2$. 
Then $v_1(r) \leq v_2(r)$ for $r \geq 0$.
\end{lemma}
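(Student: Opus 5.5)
The plan is to compare $v_1$ and $v_2$ through their difference and apply the comparison principle of Lemma \ref{lem6.2}. Set $z = v_2 - v_1$. By (\ref{eq6.18}), $v_i \in C^2(0,\infty)\cap C^1[0,\infty)$ satisfies ${\cal L}v_i + (u_i+\phi_\ell)^p = 0$ for $r>0$, with $v_i'(0)=0$ and $0 \le v_i \le \hat v$. Subtracting the two equations gives
$$
	{\cal L}z = -(u_2+\phi_\ell)^p + (u_1+\phi_\ell)^p \le 0 \quad \mbox{for} \ r > 0,
$$
because $0 \le u_1 \le u_2$, $\phi_\ell > 0$, and $s \mapsto s^p$ is increasing on $[0,\infty)$. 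Moreover $z'(0) = v_2'(0) - v_1'(0) = 0$.

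Next I would check the condition at infinity required in Lemma \ref{lem6.2}(i). Since $0 \le v_i \le \hat v$ and $r^{2/(p-1)}\hat v(r) \to 0$ as $r\to\infty$ by (\ref{eq6.17}), we get $r^{2/(p-1)}v_i(r) \to 0$ for $i=1,2$. Hence $r^{2/(p-1)}z(r) \to 0$, and in particular $\liminf_{r\to\infty} r^{2/(p-1)}z(r) \ge 0$.

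Lemma \ref{lem6.2}(i), in the case $z'(0)=0$, then gives $z \ge 0$ on $[0,\infty)$, that is, $v_1 \le v_2$.

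The only delicate point is whether Lemma \ref{lem6.2}(i) really applies under these hypotheses. Its proof needs a global minimum of $z/\phi_0$ to exist when $z$ takes a negative value. Since $z/\phi_0 = z e^{r^2/4}$, this requires $z e^{r^2/4}$ to be nonnegative at infinity. Decay like $r^{-2/(p-1)}$ alone does not give this, so I would rely on the lemma as stated rather than reprove it.

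If a self-contained alternative is wanted, I would use the weight $\phi_\ell$ instead of $\phi_0$, since ${\cal L}\phi_\ell = 0$ and $\phi_\ell > 0$. Write $z = \phi_\ell \psi$. Because $r^{2/(p-1)}\phi_\ell \to \ell > 0$, we have $\psi \to 0$ at infinity. The function $\psi$ satisfies $\psi'' + (\cdots)\psi' \le 0$ with no zeroth-order term. If $\psi$ were negative somewhere, it would attain a negative interior minimum, or a minimum at $r=0$, where $\psi'(0)=0$ holds because $\phi_\ell'(0)=0$. Either case contradicts the strong maximum principle or the Hopf argument.
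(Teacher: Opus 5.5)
Your proof is the paper's: set $z=v_2-v_1$, observe ${\cal L}z\le 0$, $z'(0)=0$ and $r^{2/(p-1)}z\to 0$, and apply Lemma \ref{lem6.2}(i). Your doubt about the $\phi_0$-weighted proof of Lemma \ref{lem6.2}(i) is justified, since decay like $r^{-2/(p-1)}$ does not force $z e^{r^2/4}$ to attain a minimum. Your $\phi_\ell$-weighted repair is valid, and most cleanly: with $\psi=z/\phi_\ell$ and $\rho(r)=r^{N-1}e^{r^2/4}$ one has $(\rho\phi_\ell^2\psi')'=\rho\phi_\ell\,{\cal L}z\le 0$ and $\rho\phi_\ell^2\psi'\to 0$ as $r\to 0$, so $\psi$ is nonincreasing with limit $0$, hence $\psi\ge 0$ with no maximum principle needed.
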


\begin{proof}
Define $w(r) = v_2(r)-v_1(r)$ for $r \geq 0$. 
Then $w$ satisfies ${\cal L}w \leq 0$ for $r \geq 0$ and $w'(0) = 0$ and 
$\lim_{r\to \infty}r^{2/(p-1)}w(r) = 0$. 
Then, from Lemma \ref{lem6.2} we have $w(r) \geq 0$ for $r \geq 0$, 
and hence we have $v_2(r) \geq v_1(r)$ for $r \geq 0$.
\end{proof}

\begin{lemma}
\label{lem6.6}
Assume that there exists a positive function $\hat{v} \in C^2(0, \infty)$ 
satisfying $(\ref{eq6.17})$. 
Then the equation $(\ref{eq6.16})$ has a solution $\hat{u} \in C^2(0, \infty)$ 
satisfying
\begin{equation}
	0 < \hat{u}(r) \leq \hat{v}(r) \quad \mbox{for} \ r \geq 0.
	\label{eq6.19}
\end{equation}
\end{lemma}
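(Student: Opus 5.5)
We obtain $\hat{u}$ by monotone iteration of the map $T$, starting from the supersolution $\hat{v}$. Set $\hat{u}_0 = \hat{v}$, which is admissible: $\hat{v}\in C^2(0,\infty)$ and $0\le\hat v\le\hat v$. Define $\hat{u}_{k+1} = T\hat{u}_k$ for $k \ge 0$. By the construction of $T$ via Lemma \ref{lem6.4}, each $\hat{u}_{k}$ ($k\ge 1$) lies in $C^2(0,\infty)\cap C^1[0,\infty)$. Moreover, $0 \le \hat{u}_{k} \le \hat{v}$, so the iteration is well defined at every step.

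\textbf{Monotonicity.} Next I show that the sequence decreases. Since $\hat{u}_1 \le \hat{v} = \hat{u}_0$ by (\ref{eq6.18}), Lemma \ref{lem6.5} gives $\hat{u}_2 = T\hat{u}_1 \le T\hat{u}_0 = \hat{u}_1$. Inductively,
\[
\hat{v} \ge \hat{u}_1 \ge \hat{u}_2 \ge \cdots \ge 0 .
\]
One technical point: Lemma \ref{lem6.5} is stated for continuous functions on $[0,\infty)$, while $\hat{u}_0 = \hat{v}$ may blow up at $0$. I would handle the first step directly: $\hat{u}_1 \le \hat{v}$ is part of the definition of $T$. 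From $k \ge 1$ on, all iterates are continuous on $[0,\infty)$, and Lemma \ref{lem6.5} applies verbatim.

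\textbf{Passage to the limit.} Define $\hat{u}(r) = \lim_k \hat{u}_k(r)$. On each $[0,R]$, I use the integral representation, as in Lemma \ref{lem6.4}:
\[
\hat{u}_{k+1}(r) = \hat{u}_{k+1}(0) - \int_0^r \frac{1}{\rho(s)}\int_0^s \rho(\tau)\Big(\frac{1}{p-1}\hat{u}_{k+1}(\tau) + (\hat{u}_k(\tau)+\phi_\ell(\tau))^p\Big)d\tau\, ds .
\]
Since $\hat{u}_{k+1}' < 0$, we have $0 \le \hat{u}_{k+1} \le \hat{u}_{k+1}(0) \le \hat{u}_1(0)$ on $[0,R]$. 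Together with the boundedness of $\phi_\ell$ on $[0,R]$, this gives uniform bounds on $\hat{u}_k$ and $\hat{u}_k'$. Arzel\`a--Ascoli (or simply monotone convergence together with dominated convergence in the integral identity) then lets us pass to the limit. The limit $\hat{u}$ satisfies the same integral equation with $\hat{u}$ in both places. Hence $\hat{u}\in C^2(0,\infty)\cap C^1[0,\infty)$ solves (\ref{eq6.16}), and $0\le \hat{u}\le \hat{v}$.

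\textbf{Strict positivity.} This is the only step not automatic from the limit, since the limit could a priori vanish. Here I use the source term: $(\hat{u}+\phi_\ell)^p \ge \phi_\ell^p > 0$. Hence $\hat{u}$ satisfies ${\cal L}\hat{u} \le -\phi_\ell^p < 0$, with $\hat{u}'(0)=0$ and $\lim_{r\to\infty} r^{2/(p-1)}\hat{u}=0$. By Lemma \ref{lem6.2}, $\hat{u}\ge 0$. If $\hat{u}(r_1)=0$ at some $r_1>0$, this would be an interior minimum, so $\hat{u}'(r_1)=0$ and $\hat{u}''(r_1)\ge 0$. Then ${\cal L}\hat{u}(r_1) \ge 0$, contradicting ${\cal L}\hat{u}(r_1)<0$. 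At $r_1=0$ the same argument works using $\hat{u}''(0) = -\frac{1}{N}\big(\frac{1}{p-1}\hat u(0)+(\hat u(0)+\phi_\ell(0))^p\big)<0$, which follows from the integral form near $0$. Thus $\hat{u}>0$.

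The main obstacle I expect is the compactness and limit step near $r=0$. It is handled by the uniform bound $\hat{u}_k(0)\le\hat{u}_1(0)$ inherited from monotonicity.
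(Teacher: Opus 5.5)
\textbf{There is a genuine gap: your iteration fails at its first step, because $\hat u_1=T\hat v$ does not exist.}

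The map $T$ is defined through Lemma \ref{lem6.4} only for $u\in C[0,\infty)$, so that the source $(u+\phi_\ell)^p$ is continuous on $[0,\infty)$. But $\hat v\to\infty$ as $r\to0$ (this is the intended condition in (\ref{eq6.17})). This is not a formality. Where the lemma is used, $\hat v+\phi_\ell=v=\mu u$ with $u$ singular. For $N/(N-2)<p\le(N+2)/(N-2)$ one has $c\,r^{-2/(p-1)}\le v\le C\,r^{-2/(p-1)}$ near $0$. Suppose some $w\in C^1[0,\infty)$ with $w'(0)=0$ and $w\ge0$ solved ${\cal L}w+v^p=0$. Integrating from $0$ would give
\[
-\rho(r)w'(r)\ \ge\ \int_0^r\rho(s)\,v(s)^p\,ds\ \ge\ c\,r^{N-2-2/(p-1)},
\]
hence $-w'(r)\ge c'r^{-1-2/(p-1)}$ and $w(r)\to\infty$ as $r\to0$, a contradiction. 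So (\ref{eq6.18}) has no solution for $u=\hat v$.

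Any solution $0\le w\le\hat v$ on $(0,\infty)$ must still have $\rho w'\to0$: a nonzero limit forces $w\to-\infty$ or $w\ge c\,r^{2-N}\gg\hat v$. It therefore obeys the same estimate, so $\hat u_1(0)=\infty$. Three consequences follow:
\begin{itemize}
\item The bound $\hat u_k(0)\le\hat u_1(0)$, which you rightly single out as the crux of compactness near $r=0$, is empty.
\item Your claim $\hat u\in C^1[0,\infty)$ is unsupported.
\item The first monotonicity step has the same defect. The inequality $\hat u_2\le\hat u_1$ means $T\hat u_1\le T\hat v$, i.e.\ Lemma \ref{lem6.5} applied with $u_2=\hat v\notin C[0,\infty)$. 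Noting that $\hat u_1\le\hat v$ does not settle this.
\end{itemize}

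\textbf{The fix is to iterate upward from the subsolution, as the paper does:} set $\hat u_0\equiv0$ and $\hat u_k=T\hat u_{k-1}$.
\begin{itemize}
\item Every source $(\hat u_{k-1}+\phi_\ell)^p$ is then continuous on $[0,\infty)$, so Lemmas \ref{lem6.4} and \ref{lem6.5} apply verbatim.
\item This gives $0<\hat u_1\le\hat u_2\le\cdots\le\hat v$. Positivity of $\hat u_1=T0$ is exactly your maximum-principle argument, now applied to a function that is regular at $0$.
\item The limit is passed in the integral identity on $[r_0,r]$, $r_0>0$, by monotone convergence. The inner integrals are bounded by $\int_0^s\rho\bigl(\frac{\hat v}{p-1}+(\hat v+\phi_\ell)^p\bigr)\le-\rho(s)\hat v'(s)<\infty$. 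To see this, integrate (\ref{eq6.17}) and use that $\rho\hat v'\le0$ because $\hat v\to\infty$ at $0$.
\item Positivity of $\hat u$ follows from $\hat u\ge\hat u_1>0$.
\end{itemize}
This yields only $\hat u\in C^2(0,\infty)$, since near $0$ the sole control is $\hat u\le\hat v$. That is why the lemma asserts nothing at the origin, and why boundedness of $\underline u$ near $0$ has to be proved separately in Theorem \ref{thm1.3}.
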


\begin{proof}
Set $\hat{u}_0 \equiv 0$, and define $\hat{u}_k = T\hat{u}_{k-1}$ for $k = 1, 2, \ldots$. 
By induction, we find that $\hat{u}_k$ is well defined and satisfies 
$$
	0 \equiv \hat{u}_0 < \hat{u}_1 < \hat{u}_2 < \cdots \hat{u}_k < \hat{u}_{k+1} < 
	\cdots < \hat{v}.
$$
Define $\hat{u}(r) = \lim_{k \to \infty}\hat{u}_k(r)$ for $r > 0$. 
Then $0 < \hat{u}(r) \leq \hat{v}(r)$ for $r > 0$. 
Take any $r_0 > 0$. 
For each $k = 1, 2, \ldots$, we have  
$$
	\hat{u}_k(r) = \hat{u}_k(r_0) -\int^r_{r_0} \frac{1}{\rho(s)}\int^s_{0} \rho(t)
	\left(\frac{1}{p-1}\hat{u}_k(t) + (\hat{u}_{k-1}(t) + \phi_{\ell}(t))^p\right) dtds
	\quad \mbox{for} \ r \geq r_0.
$$
Letting $k \to \infty$, by the monotone convergence theorem, we obtain 
$$
	\hat{u}(r) = \hat{u}(r_0) -\int^r_{r_0} \frac{1}{\rho(s)}\int^s_{0} \rho(t)
	\left(\frac{1}{p-1}\hat{u}(t) + (\hat{u}(t) + \phi_{\ell}(t))\right)dtds
	\quad \mbox{for} \ r \geq r_0.
$$
Thus $\hat{u}$ satisfies (\ref{eq6.16}) for $r > r_0$. 
Since $r_0 > 0$ is arbitrary, we obtain a solution $\hat{u} \in C^2(0, \infty)$ of (\ref{eq6.16}).
\end{proof}

\begin{proof}[Proof of Proposition \ref{prp6.1}] 
Define $\hat{v}(r) = v(r) - \phi_{\ell}(r)$. 
From (\ref{eq6.2}) we have ${\cal L}\hat{v} \leq -v^p < 0$ for $r > 0$, 
$\lim_{r \to 0}\hat{v}(r) = \infty$ and $\lim_{r \to \infty}r^{2/(p-1)}\hat{v}(r) = 0$. 
Then, from Lemma \ref{lem6.2}, we have $\hat{v}(r) \geq 0$ for $r > 0$. 
Assume that there exists $r_1 > 0$ such that $\hat{v}(r_1) = 0$. 
Then $\hat{v}'(r_1) = 0$ and $\hat{v}''(r_1) \geq 0$, and hence 
${\cal L}\hat{v}(r_1) \geq 0$, which is a contradiction. 
Thus $\hat{v}(r)$ is positive for $r > 0$ and satisfies (\ref{eq6.17}). 
Applying Lemma \ref{lem6.5}, we have a solution $\hat{u}$ of (\ref{eq6.16}) satisfying (\ref{eq6.19}). 
Define  $\underline{u}(r) = \hat{u} + \phi_{\ell}$. 
Then $\underline{u}$ is positive solution of (\ref{eq6.1}) satisfying (\ref{eq6.3}).
Lemma \ref{lem4.1} implies that $\underline{u}'(r) \leq 0$ for $r > 0$.
In the case $\underline{u}(r)$ be bounded near the origin, 
we obtain $\underline{u}'(0) = 0$ by the usual argument. So we omit its proof.
\end{proof}

\begin{proof}[Proof of Theorem \ref{thm1.3}] 
Let $u$ be a singular positive solution of (\ref{eq1.2}), 
and let $\ell \geq 0$ be the limit in (\ref{eq1.15}). 
Take any $\mu \in (0, 1)$, and define $v(r) = \mu u(r)$. 
Then $v$ satisfies 
$$
	{\cal L}v + v^p < 0 \quad \mbox{for} \ r > 0, \quad 
	\lim_{r \to 0}v(r) =\infty \quad \mbox{and} \quad 
	\lim_{r \to \infty}r^{2/(p-1)}v(r) = \mu\ell.
$$
Applying Proposition \ref{prp6.1}, we have a positive solution 
$\underline{u} \in C^2(0, \infty)$ of (\ref{eq6.1}) satisfying 
\begin{equation}
	0 < \underline{u}(r) \leq v(r) \quad \mbox{for} \ r > 0 
	\quad \mbox{and} \quad \lim_{r \to \infty}r^{2/(p-1)}\underline{u}(r) = \mu\ell.
	\label{eq6.20}
\end{equation}
Lemma \ref{lem4.1} implies that $\underline{u}'(r) \leq 0$ for $r > 0$.

We will show that $\underline{u}(r)$ is bounded near the origin.
Assume by contradiction that $\underline{u}(r) \to \infty$ as $r \to 0$. 
We first consider the case $N/(N-2) < p < (N+2)/(N-2)$. 
Theorem \ref{thm1.1} implies that the solutions $u(r)$ and $\underline{u}(r)$ satisfy 
$$
	\lim_{r \to 0}r^{2/(p-1)}u(r) = L 
	\quad \mbox{and} \quad 
	\lim_{r \to 0}r^{2/(p-1)}\underline{u}(r) = L, 
$$
where $L$ is the constant defined in (\ref{eq1.3}). 
On the other hand, from (\ref{eq6.20}), we have
$$
	 \lim_{r \to 0}r^{2/(p-1)}\underline{u}(r) \leq 
	 \lim_{r \to 0}r^{2/(p-1)}v(r) = \mu L < L,
$$
which is a contradiction. 
In the case $p = N/(N-2)$, we can reach a contradiction by the same argument as above. 
So we omit the proof in this case. 
We finally consider the case $p = (N+2)/(N-2)$.
From Theorem \ref{thm1.2}, the solution $u(r)$ satisfies 
$$
	\gamma_1 = \liminf_{r \to 0}r^{2/(p-1)}u(r) \leq
	\limsup_{r \to 0}r^{2/(p-1)}u(r) = \gamma_2
$$
with some constants $\gamma_1$ and $\gamma_2$ satisfying 
$\gamma_1 \leq L \leq \gamma_2$ and $\Phi(\gamma_1) = \Phi(\gamma_2)$, 
where $\Phi$ is defined by (\ref{eq1.8}). 
From Theorem \ref{thm1.2}, the solution $\underline{u}(r)$ also satisfies 
$$
	\underline{\gamma}_1 = 
	\liminf_{r \to 0}r^{2/(p-1)}\underline{u}(r) \leq
	\limsup_{r \to 0}r^{2/(p-1)}\underline{u}(r) 
	= \underline{\gamma}_2
$$
with some constants $\underline{\gamma}_1$ and $\underline{\gamma}_2$ satisfying 
$\underline{\gamma}_1 \leq L \leq \underline{\gamma}_2$ 
and $\Phi(\underline{\gamma}_1) = \Phi(\underline{\gamma}_2)$. 
On the other hand, from (6.20) we have 
$$
	\underline{\gamma}_1 \leq \mu \gamma_1 < L \quad 
	\mbox{and} \quad 
	L \leq \underline{\gamma}_2 \leq \mu \gamma_2.
$$
Since $\Phi(v)$ is decreasing for $0 < v < L$ and increasing for $v > L$,
we have
$$
	\Phi(\underline{\gamma}_1) \geq \Phi(\mu\gamma_1) > \Phi(\gamma_1)
	\quad \mbox{and} \quad 
	\Phi(\underline{\gamma}_2) \leq \Phi(\mu\gamma_2) < \Phi(\gamma_2).
$$
From $\Phi(\gamma_1) = \Phi(\gamma_2)$ we obtain 
$\Phi(\underline{\gamma}_1) > \Phi(\underline{\gamma}_2)$, 
which is a contradiction. 
Thus $\underline{u}(r)$ is bounded near $0$ in any cases $N/(N-2) \leq p \leq (N+2)/(N-2)$.
Then $\underline{u} \in C^2(0, \infty)\cap C^1[0, \infty)$ and satisfies $\underline{u}'(0) = 0$.
From (\ref{eq6.20}) we have $S_{\mu\ell} \neq \emptyset$, which implies that  $\mu\ell \leq \ell^*$. 
Since $\mu \in (0, 1)$ is arbitrary, we obtain $\ell \leq \ell^*$.
\end{proof}


\section*{Acknowledgements}
The author was supported by JSPS KAKENHI Grant Number JP26K06881.
This work was also supported by Research Institute for Mathematical Sciences, a Joint
Usage/Research Center located in Kyoto University.

\section*{Data availability} 
No data was used for the research described in the article.


\end{document}